\newtheorem{theorem}{Theorem}[section]
\newtheorem{lemma}[theorem]{Lemma}
\newtheorem{proposition}[theorem]{Proposition}
\newtheorem{corollary}[theorem]{Corollary}
\newtheorem{thm}{Theorem}
\theoremstyle{definition}
\newtheorem{definition}[theorem]{Definition}
\newtheorem{conjecture}[theorem]{Conjecture}
\newtheorem{problem}[theorem]{Problem}
\newtheorem{defthm}[theorem]{Definition-Theorem}
\newtheorem{remark}[theorem]{Remark}
\newtheorem{example}[theorem]{Example}
\newtheorem{notation}[theorem]{Notation}
\newtheorem{convention}[theorem]{Convention}
\newtheorem{question}[theorem]{Question}
\newcommand{\thistheoremname}{}
\newtheorem{genericthm}[theorem]{\thistheoremname}
\newenvironment{namedthm}[1]
{\renewcommand{\thistheoremname}{#1}%
	\begin{genericthm}}
	{\end{genericthm}}
\renewcommand{\mod}{\operatorname{mod}}
\newcommand{\proj}{\operatorname{proj}}
\newcommand{\ind}{\operatorname{ind}}
\newcommand{\Hom}{\operatorname{Hom}}
\newcommand{\Alg}{\operatorname{-Alg}}
\newcommand{\Ext}{\operatorname{Ext}}
\newcommand{\ext}{\operatorname{ext}}
\renewcommand{\hom}{\operatorname{hom}}
\newcommand{\twosilt}{\operatorname{2-silt}}
\renewcommand{\dim}{\operatorname{dim}}
\newcommand{\ndo}{\operatorname{end}}
\newcommand{\Ker}{\operatorname{Ker}}
\newcommand{\End}{\operatorname{End}}
\newcommand{\codim}{\operatorname{codim}}
\newcommand{\add}{\operatorname{add}}
\newcommand{\Coker}{\operatorname{Coker}}
\newcommand{\rep}{\operatorname{rep}}
\newcommand{\smd}{\operatorname{smd}}
\newcommand{\Walls}{\operatorname{Walls}}
\newcommand{\GL}{\operatorname{GL}}
\newcommand{\Aut}{\operatorname{Aut}}
\newcommand{\Stab}{\operatorname{Stab}}
\newcommand{\Mat}{\operatorname{Mat}}
\newcommand\scalemath[2]{\scalebox{#1}{\mbox{\ensuremath{\displaystyle #2}}}}
\begin{document}
	\title[]{The non-decreasing condition on $\mathbf{g}$-vectors}
	\author{Mohamad Haerizadeh}
	\email{hyrizadeh@ut.ac.ir}
	\author{Siamak Yassemi}
	\email{ yassemi@ut.ac.ir}
	 
	\keywords{g-vectors; generic decomposition; non-decreasing condition; generically $\tau$-reduced component; variety of representations; Grothendieck group}
	\date{\today}
	\begin{abstract}
		The non-decreasing condition on g-vectors is introduced.
		Our study shows that this condition is both necessary and sufficient to ensure that the generically indecomposable direct summands of a given g-vector are linearly independent.
		Additionally, we prove that for any finite dimensional algebra $\Lambda$,  under the non-decreasing condition, the number of generically indecomposable irreducible components that appear in the decomposition of a given generically $\tau$-reduced component is lower than or equal to $|\Lambda|$.
		This solves the conjecture concerning the cardinality of component clusters by Cerulli-Labardini-Schröer, in a reasonable generality.
		Lastly, we study numerical criteria to check the wildness of g-vectors.
	\end{abstract}
		\maketitle
		\tableofcontents
	\section*{Introduction}
		Because of technical reasons concerning the Voigt's Isomorphism, to generalize results about general representations of quivers to bound quivers, one should to work with general representations of the irreducible components of varieties of representations.
		In this regard, inspired by the works of Kac \cite{Ka82} and Schofield \cite{Sch92}, Crawley-Boevey and Schr\"{o}er \cite{CBSc02} construct decomposition for irreducible components of varieties of representation, which is an analogue of the Krull-Remak-Schmidt decomposition.
		
		There also exists a parallel approach that relies on general presentations of varieties of presentations. In their paper \cite{DeFe15}, Derksen and Fei show that general presentations present general representations \cite[Theorem 2.3]{DeFe15}. Moreover, they introduce decomposition for varieties of presentations, which is nicely related to decomposition of general representations. Indeed, they introduce canonical decomposition of g-vectors \cite[Theorem 4.4]{DeFe15}, which we currently call generic decomposition.
		
		The main results of our study concern the generic decomposition of g-vectors and the decomposition of generically $\tau$-reduced components. More precisely, we generalize some results of \cite{CILFS14} and \cite{AsIy23}, about g-vectors and generically $\tau$-reduced components by applying results of the papers \cite{Pla13}.
		In this paper, Plamondon assigns a generically $\tau$-reduced component to each g-vector. The process gives us a surjective map from the Grothendieck group to the set of generically $\tau$-reduced components of $\rep(\Lambda)$ \ref{thm5500}.
		
		On the other hand, Cerulli-Irelli, Labardini-Fragoso and Schr\"{o}er study the decompositions of generically $\tau$-reduced components \cite[Theorem 1.3]{CILFS14}. This theorem together with Lemma \ref{lem6100} and Lemma \ref{lem5200} reveals that there is a link between generic decomposition of g-vectors and decomposition of generically $\tau$-reduced components.
		\begin{thm}[\ref{thm6200}]
			Let $\{\mathcal{Z}_1, \mathcal{Z}_2,\cdots, \mathcal{Z}_s\}$ be a set of irreducible components of $\rep(\Lambda)$. Then the following statements are equivalent.
			\begin{enumerate}
				\item[$(1)$] $\mathcal{Z}=\overline{\mathcal{Z}_{1}\oplus\mathcal{Z}_{2}\oplus\cdots\oplus \mathcal{Z}_{s}}$ is a generically $\tau$-reduced component.
				\item[$(2)$] Each $\mathcal{Z}_i$ is a generically $\tau$-reduced component and
				$$\min\hom_{\Lambda}(\mathcal{Z}_i,\tau\mathcal{Z}_j)=0,$$
				for all $1\le i\neq j\le s$.
				\item[$(3)$] Each $\mathcal{Z}_i$ is a generically $\tau$-reduced component and
				$$e(g^{\mathcal{Z}_i},g^{\mathcal{Z}_j})=0,$$
				for all $1\le i\neq j\le s$.
			\end{enumerate}
			In this case, $g^{\mathcal{Z}}=g^{\mathcal{Z}_1}\oplus\cdots\oplus g^{\mathcal{Z}_s}$ is the generic decomposition of $g^{\mathcal{Z}}$.
		\end{thm}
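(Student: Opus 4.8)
The plan is to prove the chain of equivalences by isolating the two independent inputs: the geometric characterization of when a direct sum of components is generically $\tau$-reduced, and the homological identity that converts generic $\Hom$-values into the bilinear $e$-form on g-vectors. The equivalence $(1)\Leftrightarrow(2)$ will come essentially for free from the theorem of Cerulli-Irelli, Labardini-Fragoso and Schr\"{o}er, while $(2)\Leftrightarrow(3)$ will be a term-by-term translation supplied by Lemma \ref{lem5200}. The closing assertion about the generic decomposition will then follow from additivity of g-vectors together with Derksen-Fei's characterization of the generic decomposition by $e$-orthogonality.

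First, for $(1)\Leftrightarrow(2)$, I would invoke \cite[Theorem 1.3]{CILFS14} directly: the closure $\overline{\mathcal{Z}_1\oplus\cdots\oplus\mathcal{Z}_s}$ of the locus of decomposable representations is a generically $\tau$-reduced component precisely when each $\mathcal{Z}_i$ is itself generically $\tau$-reduced and the generic value $\min\hom_\Lambda(\mathcal{Z}_i,\tau\mathcal{Z}_j)$ vanishes for every ordered pair $i\neq j$. This statement already carries both implications: a direct summand of a generically $\tau$-reduced component is again generically $\tau$-reduced, so $(1)$ forces each $\mathcal{Z}_i$ to be reduced, and conversely the vanishing condition is exactly what is needed to make the generic $E$-invariants add up along the direct-sum construction.

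Next, for $(2)\Leftrightarrow(3)$, the two conditions share the hypothesis that each $\mathcal{Z}_i$ is generically $\tau$-reduced; under this hypothesis the g-vector $g^{\mathcal{Z}_i}$ is well defined through the assignment of \ref{thm5500}. I would then apply Lemma \ref{lem5200}, which identifies the generic $\Hom$-value with the $e$-form, giving
$$\min\hom_\Lambda(\mathcal{Z}_i,\tau\mathcal{Z}_j)=e(g^{\mathcal{Z}_i},g^{\mathcal{Z}_j})$$
for all $i\neq j$. Since the left-hand side vanishes in $(2)$ exactly when the right-hand side vanishes in $(3)$, the two conditions become literally the same after this substitution, and the equivalence follows with no further work.

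Finally, to identify $g^{\mathcal{Z}}=g^{\mathcal{Z}_1}\oplus\cdots\oplus g^{\mathcal{Z}_s}$ as the generic decomposition, I would first use Lemma \ref{lem6100} to record that the g-vector is additive along the direct-sum construction, so that $g^{\mathcal{Z}}=\sum_i g^{\mathcal{Z}_i}$ once $\mathcal{Z}$ is known to be generically $\tau$-reduced via $(1)$. Condition $(3)$ then supplies precisely the pairwise $e$-orthogonality among the summands that Derksen-Fei require in their characterization of the generic decomposition \cite[Theorem 4.4]{DeFe15}; by the uniqueness and compatibility of that decomposition, concatenating the generic decompositions of the individual $g^{\mathcal{Z}_i}$ yields the generic decomposition of $g^{\mathcal{Z}}$. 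I expect the main obstacle to lie not in the formal bookkeeping of the equivalences but in the homological identity underlying Lemma \ref{lem5200}: matching the generic value of $\Hom_\Lambda(-,\tau-)$ taken over the representation variety with the combinatorially defined $e$-form on presentations requires the passage through Voigt's isomorphism and the Derksen-Fei result that general presentations present general representations, and it is there that genericity must be controlled carefully so that the minimal $\Hom$-value is attained on a common dense open subset.
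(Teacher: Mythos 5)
Your argument is correct and follows the paper's proof exactly: the paper likewise obtains the theorem as an immediate consequence of \cite[Theorem 1.3]{CILFS14}, the identity $e(g,h)=\min\hom_{\Lambda}(\mathcal{Z}_h,\tau\mathcal{Z}_g)$, and the additivity $g^{\mathcal{Z}}=g^{\mathcal{Z}_1}+\cdots+g^{\mathcal{Z}_s}$. The only corrections needed are bookkeeping: you have interchanged the two lemma labels --- the $\Hom$/$E$-invariant comparison is Lemma \ref{lem6100} while the additivity of g-vectors along direct sums is Lemma \ref{lem5200} --- and Lemma \ref{lem6100} transposes the arguments, giving $\min\hom_{\Lambda}(\mathcal{Z}_i,\tau\mathcal{Z}_j)=e(g^{\mathcal{Z}_j},g^{\mathcal{Z}_i})$ rather than $e(g^{\mathcal{Z}_i},g^{\mathcal{Z}_j})$, which is harmless here because conditions $(2)$ and $(3)$ range over all ordered pairs $i\neq j$.
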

		This connection leads us to provide an answer for the conjecture \ref{con5900}, by Cerulli-Labardini-Schröer concerning component clusters and thus to generalize \cite[Theorem 6.1]{CILFS14}.
		\begin{thm}[\ref{thm6500}]
			Let $\mathcal{Z}$ be a generically $\tau$-reduced component. If $g^\mathcal{Z}$ satisfies the non-decreasing condition (See Definition \ref{def4400}), then $|\mathcal{Z}|\le |\Lambda|$. In particular,
			if $\Lambda$ satisfies the non-decreasing condition, then for all generically $\tau$-reduced components $\mathcal{Z}$, we have $|\mathcal{Z}|\le|\Lambda|$.
		\end{thm}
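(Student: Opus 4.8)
The plan is to reduce the bound on $|\mathcal{Z}|$ to a purely linear-algebraic statement in the Grothendieck group, using the dictionary between the decomposition of $\mathcal{Z}$ and the generic decomposition of its $g$-vector. First I would write the decomposition of the generically $\tau$-reduced component as $\mathcal{Z}=\overline{\mathcal{Z}_1\oplus\cdots\oplus\mathcal{Z}_s}$ into generically indecomposable components, so that by definition $|\mathcal{Z}|=s$. By Theorem \ref{thm6200}, this decomposition is matched by the generic decomposition $g^{\mathcal{Z}}=g^{\mathcal{Z}_1}\oplus\cdots\oplus g^{\mathcal{Z}_s}$, in which each $g^{\mathcal{Z}_i}$ is the generically indecomposable summand attached to $\mathcal{Z}_i$. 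Thus counting the components of $\mathcal{Z}$ is the same as counting the generically indecomposable summands of $g^{\mathcal{Z}}$.

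The key step is then to feed in the characterization of the non-decreasing condition. Since $g^{\mathcal{Z}}$ satisfies the non-decreasing condition, the equivalence established earlier in the paper (that this condition is precisely what forces the generically indecomposable summands of a $g$-vector to be linearly independent) tells us that $g^{\mathcal{Z}_1},\dots,g^{\mathcal{Z}_s}$ are linearly independent elements of the Grothendieck group $K_0(\proj\Lambda)$. This is the substantive input; everything before it is bookkeeping and everything after it is elementary.

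To finish, I would recall that $K_0(\proj\Lambda)$ is free abelian of rank $|\Lambda|$, since its rank counts the isomorphism classes of indecomposable projectives, equivalently the simple $\Lambda$-modules. A linearly independent subset of a free abelian group of rank $|\Lambda|$ has at most $|\Lambda|$ elements, whence $s\le|\Lambda|$, i.e.\ $|\mathcal{Z}|\le|\Lambda|$. For the ``in particular'' clause, if $\Lambda$ itself satisfies the non-decreasing condition then every $g$-vector $g^{\mathcal{Z}}$ inherits the condition, so the bound applies uniformly to all generically $\tau$-reduced components.

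The main obstacle is not in the present argument, which is short, but in the characterization being invoked: the proof rests entirely on knowing that the non-decreasing condition is equivalent to linear independence of the generically indecomposable summands. The point requiring care is verifying that this equivalence genuinely applies here --- in particular that each $\mathcal{Z}_i$ really is generically indecomposable and that Theorem \ref{thm6200} correctly identifies the $g^{\mathcal{Z}_i}$ with the summands appearing in the generic decomposition of $g^{\mathcal{Z}}$ --- since the whole bound collapses if linear independence can fail.
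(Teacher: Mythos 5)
Your proposal is correct and follows exactly the route the paper takes: its proof of Theorem \ref{thm6500} is precisely ``a direct consequence of Theorem \ref{thm6200} and Theorem \ref{thm4700}'', i.e.\ match the decomposition of $\mathcal{Z}$ with the generic decomposition of $g^{\mathcal{Z}}$ via Theorem \ref{thm6200}, invoke the linear independence of $\ind(g^{\mathcal{Z}})$ from Theorem \ref{thm4700}, and bound by the rank $n=|\Lambda|$ of $K_0(\proj\Lambda)$. Your write-up merely makes explicit the bookkeeping (distinctness of the $\mathcal{Z}_i$ versus the set $\ind(g^{\mathcal{Z}})$, and that only the forward implication of Theorem \ref{thm4700} is needed) that the paper leaves implicit.
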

		\begin{thm}[\ref{cor6650}]
			Let $\mathcal{Z}$ be a generically $\tau$-reduced component. Then the following statements are equivalent.
			\begin{itemize}
				\item[$(1)$] $g^\mathcal{Z}\in\mathcal{C}^{\circ}(\mathrm{P})$ for a $2$-term silting complex $\mathrm{P}\in K^b(\proj\Lambda)$.
				\item[$(2)$] $|\mathcal{Z}|=|\Lambda|$ and $g^\mathcal{Z}$ satisfies the non-decreasing condition.
			\end{itemize}
		\end{thm}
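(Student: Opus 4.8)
The plan is to prove the two implications separately, using Theorem~\ref{thm6200} to pass between the numerical condition $e(g^{\mathcal{Z}_i},g^{\mathcal{Z}_j})=0$ and generic $\tau$-reducedness of a direct sum, together with the characterization of the non-decreasing condition as linear independence of the generically indecomposable summands of $g^{\mathcal{Z}}$, and the standard structure of the silting fan: a $2$-term silting complex $\mathrm{P}=\bigoplus_{i=1}^{|\Lambda|}P_i$ has exactly $|\Lambda|$ indecomposable summands whose $g$-vectors form a $\mathbb{Z}$-basis of the split Grothendieck group $K_0(\proj\Lambda)\cong\mathbb{Z}^{|\Lambda|}$, so that $\mathcal{C}^{\circ}(\mathrm{P})=\{\sum_i a_i g^{P_i}\mid a_i>0\}$. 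Throughout I use that the assignment $\mathcal{Z}\mapsto g^{\mathcal{Z}}$ is injective on generically $\tau$-reduced components (Theorem~\ref{thm5500}).

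For $(1)\Rightarrow(2)$ I would write $g^{\mathcal{Z}}=\sum_{i=1}^{|\Lambda|}a_i g^{P_i}$ with every $a_i>0$, hence $a_i\ge 1$ since $g^{\mathcal{Z}}$ is integral. Each $P_i$ is indecomposable presilting, so $g^{P_i}=g^{\mathcal{Y}_i}$ for a rigid indecomposable generically $\tau$-reduced component $\mathcal{Y}_i$, and the silting hypothesis gives $e(g^{P_i},g^{P_j})=0$ for $i\neq j$. By Theorem~\ref{thm6200} the component $\overline{\bigoplus_i\mathcal{Y}_i^{\oplus a_i}}$ is generically $\tau$-reduced with $g$-vector $\sum_i a_i g^{P_i}=g^{\mathcal{Z}}$, so it equals $\mathcal{Z}$. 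Hence the generic decomposition of $g^{\mathcal{Z}}$ may carry multiplicities $a_i>1$, but its distinct generically indecomposable summands are exactly $g^{\mathcal{Y}_1},\dots,g^{\mathcal{Y}_{|\Lambda|}}$, whence $|\mathcal{Z}|=|\Lambda|$; since these summands form a basis they are linearly independent, so $g^{\mathcal{Z}}$ satisfies the non-decreasing condition of Definition~\ref{def4400}.

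For $(2)\Rightarrow(1)$ I would write the generic decomposition as $g^{\mathcal{Z}}=\sum_{i=1}^{|\Lambda|}b_i\,g^{\mathcal{Z}_i}$ into distinct indecomposables with multiplicities $b_i\ge 1$. The non-decreasing condition makes $g^{\mathcal{Z}_1},\dots,g^{\mathcal{Z}_{|\Lambda|}}$ linearly independent, hence a $\mathbb{Q}$-basis, and Theorem~\ref{thm6200} yields $e(g^{\mathcal{Z}_i},g^{\mathcal{Z}_j})=0$ for all $i\neq j$. It remains to show that each $\mathcal{Z}_i$ is rigid, i.e.\ $e(g^{\mathcal{Z}_i},g^{\mathcal{Z}_i})=0$. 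Granting this, $g^{\mathcal{Z}_i}$ is the $g$-vector of an indecomposable $2$-term presilting complex $P_i$; the vanishing of $e$ on all pairs translates into $\Hom(P_i,P_j[1])=0$ for all $i,j$, so $\bigoplus_i P_i$ is presilting, and having $|\Lambda|$ indecomposable summands it is a $2$-term silting complex $\mathrm{P}$. Then $g^{\mathcal{Z}}=\sum_i b_i g^{\mathcal{Z}_i}\in\mathcal{C}^{\circ}(\mathrm{P})$ because every $b_i\ge 1>0$, giving $(1)$.

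The main obstacle is precisely the rigidity of the summands in $(2)\Rightarrow(1)$: a priori some indecomposable $\mathcal{Z}_i$ could be non-rigid, with $e(g^{\mathcal{Z}_i},g^{\mathcal{Z}_i})>0$, even though its $g$-vector enters a basis of pairwise-compatible summands. I would attack this through maximality in the silting fan, exploiting that the rays of the fan are exactly the $g$-vectors of the rigid indecomposable components and that a $2$-term presilting complex has at most $|\Lambda|$ indecomposable summands, with equality only for silting complexes. Concretely, I expect to show---via the compatibility analysis behind Theorem~\ref{thm6200} and the results of \cite{Pla13} and \cite{AsIy23}---that a non-rigid indecomposable component is too constrained in its pairwise compatibility (its self $e$-invariant being positive, together with the additivity of the generic $\hom(-,\tau-)$ form along the decomposition) to belong to a family of $|\Lambda|$ pairwise-compatible components with linearly independent $g$-vectors; equivalently, that such a maximal family must realize the rays of a single silting cone and therefore consist of rigid components. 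Once this is established, the construction of $\mathrm{P}$ in the previous paragraph applies verbatim and the positivity bookkeeping placing $g^{\mathcal{Z}}$ in the open cone $\mathcal{C}^{\circ}(\mathrm{P})$ is routine.
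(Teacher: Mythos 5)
Your direction $(1)\Rightarrow(2)$ is correct and is essentially the route the paper intends (the paper gives no written proof, only the pointer to Remark \ref{rem3750} combined with Theorems \ref{thm6200} and \ref{thm4700}/\ref{thm6500}): the summands $g^{P_i}$ of a $2$-term silting complex are generically indecomposable, pairwise compatible and a basis, so the decomposition $g^{\mathcal{Z}}=\bigoplus a_ig^{P_i}$ is the generic decomposition, $|\ind(g^{\mathcal{Z}})|=|\Lambda|$, tameness gives the non-decreasing condition via Proposition \ref{prop4450}, and Theorem \ref{thm6200} transports this to $|\mathcal{Z}|=|\Lambda|$. (One small point you skip: a priori some $P_i$ could be a shifted projective, in which case $g^{P_i}$ is negative and is \emph{not} the $g$-vector of a component; this is excluded because $g^{\mathcal{Z}}$ has no negative direct summand by Lemma \ref{lem5800}.)

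The direction $(2)\Rightarrow(1)$, however, contains a genuine gap that you name but do not close. You correctly reduce everything to showing that each generically indecomposable summand $g^{\mathcal{Z}_i}$ is rigid, i.e.\ $e(g^{\mathcal{Z}_i},g^{\mathcal{Z}_i})=0$ --- equivalently $e(g^{\mathcal{Z}},g^{\mathcal{Z}})=0$, since Theorem \ref{thm6200} already gives $e(g^{\mathcal{Z}_i},g^{\mathcal{Z}_j})=0$ for $i\neq j$ --- and this is precisely the nontrivial content of the implication: nothing proved so far rules out a wild, non-rigid generically indecomposable component sitting inside a family of $|\Lambda|$ pairwise-compatible components with linearly independent $g$-vectors. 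Your final paragraph replaces a proof of this with an expectation (``I expect to show\dots''), and the heuristic you offer (``the rays of the fan are exactly the $g$-vectors of the rigid indecomposable components'') is a statement about the silting fan and so cannot by itself exclude components lying on or outside the walls. The input the paper actually leans on here is Remark \ref{rem3750}, i.e.\ \cite[Theorem 3.17]{As21}: $K_0(\proj\Lambda)_{\mathbb{Q}}\setminus\Walls=\coprod_{\mathrm{P}}\mathcal{C}^{\circ}(\mathrm{P})_{\mathbb{Q}}$, which one must combine with the linear independence and pairwise compatibility of the $n$ summands (so that they span a full-dimensional simplicial cone) to force that cone to meet, and then coincide with, a silting cone. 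You never invoke this covering result, and without it (or an equivalent substitute) the rigidity of the $\mathcal{Z}_i$ is unproved, so the construction of the silting complex $\mathrm{P}=\bigoplus P_i$ in your argument does not get off the ground.
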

		More precisely, we show that under the non-decreasing condition, the generically indecomposable direct summands of a given g-vector are linearly independent. Then we apply the connecting Theorem \ref{thm6200}.
		Furthermore, we prove that generically indecomposable direct summands in any generic decomposition are linearly independent, exactly, when $\Lambda$ satisfies the non-decreasing condition. Due to Proposition \ref{prop4450}, the following theorem also generalizes \cite[Proposition 6.7(b)]{AsIy23}.
		\begin{thm}[\ref{thm4700}]
			Let $g$ be a g-vector. If $g$ satisfies the non-decreasing condition, then $\ind(g)$ is linearly independent. Moreover, the following statements are equivalent.
			\begin{enumerate}
				\item[$(1)$] $\Lambda$ satisfies the non-decreasing condition.
				\item[$(2)$] $\ind(g)$ is linearly independent, for each g-vector $g$.
			\end{enumerate}
		\end{thm}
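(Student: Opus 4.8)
The plan is to separate the two assertions: first the unconditional implication ``non-decreasing condition $\Rightarrow$ $\ind(g)$ linearly independent,'' and then the equivalence, for which the implication (1) $\Rightarrow$ (2) is a formal consequence of the first assertion while (2) $\Rightarrow$ (1) is the real content. Throughout, the engine is Definition \ref{def4400} together with Proposition \ref{prop4450}, which is flagged as the bridge to \cite[Proposition 6.7(b)]{AsIy23} and which I expect to furnish an exact reformulation of linear independence of $\ind(-)$ in terms of the numerical data that the non-decreasing condition constrains.

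For the first assertion I would start from the generic decomposition $g = g_1 \oplus \cdots \oplus g_r$ into its generically indecomposable summands, so that $\ind(g) = \{g_1, \ldots, g_r\}$, and then use the non-decreasing condition to fix an ordering of these summands. The point of Definition \ref{def4400} is, I expect, to arrange that the partial g-vectors $g_1 \oplus \cdots \oplus g_k$ remain genuine generic decompositions with exactly $k$ indecomposable summands and that passing from step $k-1$ to step $k$ introduces a direction not already spanned. I would then run an induction on $k$, showing at each stage that $g_k$ lies outside the span of $g_1, \ldots, g_{k-1}$: the non-decreasing condition supplies precisely the obstruction to a dependence (a new coordinate, equivalently the relevant pairing datum), and Proposition \ref{prop4450} is what packages this per-step non-vanishing into independence of the whole family. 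Some care is needed with the bookkeeping of multiplicities, i.e. treating $\ind(g)$ as the set of distinct indecomposable summands rather than the list with repetitions.

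For the equivalence, the implication (1) $\Rightarrow$ (2) is immediate once the first assertion is in hand: if $\Lambda$ satisfies the non-decreasing condition then, by definition, \emph{every} g-vector $g$ does, and the first assertion applies verbatim to each of them. The substance is (2) $\Rightarrow$ (1), which I would prove by contraposition. Assuming $\Lambda$ fails the non-decreasing condition, I would fix a g-vector witnessing the failure of Definition \ref{def4400} at some stage of its generic decomposition, and then manufacture from this witness an explicit nontrivial linear relation among the members of $\ind(g)$ — or among those of the truncation of $g$ at the step where the condition breaks, should the dependence not surface for $g$ itself. This is exactly where Proposition \ref{prop4450} is used in the reverse direction: the failure of the condition feeds directly into its independence criterion and produces the relation.

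I expect the main obstacle to be this converse direction (2) $\Rightarrow$ (1): turning a combinatorial violation of the non-decreasing condition into a genuine, verifiably nonzero linear dependence of g-vectors. In the forward direction the condition is used constructively to build a triangular system, whereas in the reverse direction one must argue that the condition is \emph{sharp} — that nothing weaker can force independence — which requires identifying precisely which truncation of the witness yields a dependent family and then checking, via Proposition \ref{prop4450}, that the resulting relation is nontrivial. The fact that the equivalence is stated at the level of $\Lambda$ (all g-vectors) rather than for a single $g$ reflects that the dependence may need to be exhibited on a related g-vector, so keeping the induction and the multiplicity bookkeeping coherent across these passages will be the delicate part.
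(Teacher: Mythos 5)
There is a genuine gap in your forward direction. You build the whole argument on an ordering of the indecomposable summands and an induction in which each $g_k$ contributes ``a new direction,'' with Proposition \ref{prop4450} supplying the independence criterion. But Proposition \ref{prop4450} only records the implications (ray condition $\Rightarrow$ regularity condition $\Rightarrow$ non-decreasing condition) and says nothing about linear independence; and the non-decreasing condition (Definition \ref{def4400}: $h\notin D_h$ for every wild $h\in\ind(g)$) provides no ordering, no triangularity, and no ``pairing datum'' from which such an induction could be run. The paper's actual mechanism is entirely different and is by contradiction: a nontrivial relation among distinct members of $\ind(g)$ can be rewritten as $\bigoplus_{i\in I}a_ig_i=\bigoplus_{j\in J}b_jg_j$ with $I\cap J=\emptyset$ and $a_i,b_j\in\mathbb{N}$ (the direct sums are automatic since $e(g_i,g_j)=0$ for $i\neq j$ in a generic decomposition). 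For $i\in I$ the right-hand side lies in $D_{g_i}$, and since it contains $a_ig_i$ as a summand this forces $g_i\in D_{g_i}$; the non-decreasing condition then says every $g_l$ with $l\in I\cup J$ is \emph{tame}, and for tame g-vectors the uniqueness of generic decompositions (Remark \ref{rem3700}) forces $\{g_i\mid i\in I\}=\{g_j\mid j\in J\}$, contradicting disjointness. Your proposal never identifies this ``dependence forces tameness via $D$-membership'' step, which is the entire content of the implication.

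Your converse direction has the right shape --- contraposition, and you correctly anticipate that the dependence may only appear for a multiple of $g$ --- but the witness construction is missing. Concretely: from a wild $h\in\ind(g)$ with $h\in D_h$, take $t=\min\{t'\in\mathbb{N}\mid h+t'h=h\oplus t'h\}$; minimality guarantees $h$ is not a direct summand of $th$, so if $th=h_1\oplus\cdots\oplus h_s$ is the generic decomposition then $(1+t)h=h\oplus h_1\oplus\cdots\oplus h_s$ is a generic decomposition in which $h\notin\{h_1,\dots,h_s\}$, and the identity $th=\sum(\text{multiplicities})\,h_i$ is a nontrivial relation in $\ind((1+t)h)\subseteq\ind((1+t)g)$. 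Note also that the precise equivalence in the paper is between linear independence of $\ind(tg)$ for all $t$ and the non-decreasing condition for all $tg$; the version you were given, quantified over all g-vectors $g$, absorbs the $t$, but any proof must pass through these multiples.
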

		Since determining whether a g-vector is tame or wild may be a little complicated, examining some numerical criteria can be helpful. Therefore, we will study some criteria in the last section.
		\begin{thm}[\ref{thm7000}]
			Consider a g-vector $g$ with no negative\footnote{We say $g$ is positive (negative, respectively) if $[P^{g-}]=0$ ($[P^{g+}]=0$, respectively).} direct summand. Then $g$ is wild if and only if
			$$\min\hom_{\Lambda}(\mathcal{Z}_g,\mathcal{Z}_g)>\langle g,d(g)\rangle.$$
		\end{thm}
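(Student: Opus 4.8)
The plan is to translate the numerical inequality into a statement about the generic $E$-self-invariant of $\mathcal{Z}_g$ and then to read off wildness from the sign of that invariant. Since $g$ has no negative direct summand, the component $\mathcal{Z}_g$ is an honest irreducible component of $\rep(\Lambda)$, its generic representation $M$ has dimension vector $d(g)$, and $g=g^{M}$; in particular no projective correction terms intervene. The crux is the Auslander--Reiten-type pairing formula for $\mathbf{g}$-vectors (as in \cite{DeFe15,AsIy23}): for a module $M$ with $\mathbf{g}$-vector $g^{M}$ and any module $N$,
$$\langle g^{M}, \dim N\rangle = \dim\Hom_{\Lambda}(M,N) - \dim\Hom_{\Lambda}(N, \tau M).$$
First I would apply this to a pair $M,N$ of \emph{independent} generic representations of $\mathcal{Z}_g$. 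Both sides are then constant on a dense open subset of $\mathcal{Z}_g\times\mathcal{Z}_g$, and since $g^{M}=g$ and $\dim N=d(g)$ there, the intersection of the two dense opens on which $\dim\Hom_{\Lambda}(M,N)$ and $\dim\Hom_{\Lambda}(N,\tau M)$ attain their generic values yields the identity
$$\langle g, d(g)\rangle = \min\hom_{\Lambda}(\mathcal{Z}_g,\mathcal{Z}_g) - \min\hom_{\Lambda}(\mathcal{Z}_g, \tau\mathcal{Z}_g).$$

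From this identity the inequality $\min\hom_{\Lambda}(\mathcal{Z}_g,\mathcal{Z}_g) > \langle g, d(g)\rangle$ is equivalent to $\min\hom_{\Lambda}(\mathcal{Z}_g, \tau\mathcal{Z}_g) > 0$, i.e. to the positivity of the generic $E$-self-invariant; note this invariant is always $\ge 0$, so the condition in the statement is exactly this single geometric positivity. The whole theorem thus reduces to showing $g$ is wild if and only if $\min\hom_{\Lambda}(\mathcal{Z}_g,\tau\mathcal{Z}_g)>0$. To handle this I would reduce to the generically indecomposable case: writing the generic decomposition $g=g_1\oplus\cdots\oplus g_r$ into generically indecomposable $\mathbf{g}$-vectors, Theorem \ref{thm6200} gives $e(g_i,g_j)=\min\hom_{\Lambda}(\mathcal{Z}_{g_i},\tau\mathcal{Z}_{g_j})=0$ for $i\neq j$, so the cross terms vanish and the invariant is additive,
$$\min\hom_{\Lambda}(\mathcal{Z}_g,\tau\mathcal{Z}_g) = \sum_{i=1}^{r}\min\hom_{\Lambda}(\mathcal{Z}_{g_i},\tau\mathcal{Z}_{g_i}).$$
Hence positivity holds precisely when some generically indecomposable summand contributes a positive term, matching the convention that $g$ is wild as soon as one of its generic summands is, and it suffices to treat a single generically indecomposable $\mathbf{g}$-vector.

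The final step is the representation-theoretic heart and the main obstacle. For a generically indecomposable component with $\min\hom_{\Lambda}(\mathcal{Z}_g,\tau\mathcal{Z}_g)=0$ the generic representation is $\tau$-rigid, so $\mathcal{Z}_g$ is the closure of a single orbit and is of non-wild type; this direction is the easier one. For the converse I would transport the Kac--Schofield-type dichotomy to generically $\tau$-reduced components via \cite{CILFS14,Pla13}: once the generic $E$-self-invariant is positive the component carries a positive-dimensional family of non-isomorphic indecomposables admitting self-extensions, whose local deformation model is of wild type.

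Making this last implication precise is where the real difficulty lies. I would convert the positivity of $\min\hom_{\Lambda}(\mathcal{Z}_g,\tau\mathcal{Z}_g)$ into a parameter count using the $\tau$-reduced codimension identity $\codim_{\rep(d(g))}\mathcal{Z}_g=\min\hom_{\Lambda}(\mathcal{Z}_g,\tau\mathcal{Z}_g)$, and control the endomorphism dimension of the generic module so as to identify the local structure governing the deformations of $\mathcal{Z}_g$. I expect the subtle point to be ruling out any intermediate "tame" behavior at the threshold, and this is the step I would model most carefully on the wildness arguments available for varieties of representations, reducing everything, through the additivity above, to the generically indecomposable summands already isolated.
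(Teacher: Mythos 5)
Your first half is sound and is essentially the paper's own route: the identity
$$\langle g,d(g)\rangle=\min\hom_{\Lambda}(\mathcal{Z}_g,\mathcal{Z}_g)-\min\hom_{\Lambda}(\mathcal{Z}_g,\tau\mathcal{Z}_g)$$
obtained from the Auslander--Reiten pairing on generic pairs is exactly Theorem \ref{thm6800} combined with Lemma \ref{lem6100}, and it correctly reduces the statement to showing that $g$ is wild if and only if $\min\hom_{\Lambda}(\mathcal{Z}_g,\tau\mathcal{Z}_g)>0$. The gap is in how you finish. At that point you are one line away: by Lemma \ref{lem6100}, $\min\hom_{\Lambda}(\mathcal{Z}_g,\tau\mathcal{Z}_g)=e(g,g)$, and by Definition \ref{def2800} together with the direct-sum criterion \ref{defthm2700} (or Proposition \ref{prop3100}), $g$ is tame precisely when $2g=g\oplus g$, i.e.\ precisely when $e(g,g)=0$; since $e(g,g)\ge 0$, wildness of $g$ is \emph{by definition} equivalent to $e(g,g)>0$. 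Instead you reinterpret ``wild'' as wild representation type and launch a Kac--Schofield/deformation-theoretic dichotomy that you explicitly leave unfinished (``making this last implication precise is where the real difficulty lies''). That final step is both unnecessary and not what the theorem asserts, so as written the proof is incomplete.

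Two further concrete errors in that last part. First, the claim that $\min\hom_{\Lambda}(\mathcal{Z}_g,\tau\mathcal{Z}_g)=0$ forces the generic representation to be $\tau$-rigid is false: this minimum is taken over \emph{pairs} $(N,M)\in\mathcal{Z}_g\times\mathcal{Z}_g$, and the paper records in Corollary \ref{cor7300} that it is only a lower bound for the diagonal quantity $\min\{\hom_{\Lambda}(M,\tau M)\mid M\in\mathcal{Z}_g\}$; for a tame algebra a one-parameter family of generic modules can have $e(g,g)=0$ while every $M$ in the family satisfies $\hom_{\Lambda}(M,\tau M)>0$, so $\mathcal{Z}_g$ need not be an orbit closure. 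Second, the additivity reduction via Theorem \ref{thm6200} is harmless but superfluous once one uses the correct definition of wildness (Remark \ref{rem3200} already gives that $g$ is tame iff all its generic summands are).
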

	\section*{Conventions}
		We consider a basic finite dimensional algebra $\Lambda$ over an algebraically closed field $k$. So because of Morita equivalence, one can assume $\Lambda=kQ/I$, where $(Q,I)$ is a bound quiver with $n$ vertices. For simplicity, we assume that $Q$ is connected. It is well known that, up to isomorphism, there are exactly $n$ simple modules $\{S_{(1)},S_{(2)},\cdots,S_{(n)}\}$. Moreover, the Grothendieck group of the category of finitely generated modules $K_{0}(\mod\Lambda)$ is a free abelian group of finite rank $n$ with a basis  $\{[S_{(1)}],[S_{(2)}],\cdots,[S_{(n)}]\}$. One can also consider the Grothendieck group of category of finitely generated projective modules $K_0(\proj\Lambda)$ which is also a free abelian group of finite rank $n$ with a basis $\{[P_{(1)}],[P_{(2)}],\cdots,[P_{(n)}]\}$, where $P_{(i)}$ is a projective cover of $S_{(i)}$, for each $i=1,2,\cdots,n$. We also fix the set $\{e_1,\cdots,e_n\}$ as the complete set of primitive
		orthogonal idempotents corresponding to $\{P_{(1)},\cdots,P_{(n)}\}$.
	\section{Preliminaries}
		In this section, we review some geometric methods in representation theory of finite dimensional algebras. Nonetheless, we refer non-expert readers to the  Crawley-Boevey's note \cite{CB20}.
		\begin{convention}
			All modules will be finitely generated left modules unless stated otherwise. As well, product of arrows in a quiver algebra is something like the composition of functions.
		\end{convention}
		\begin{definition}\label{def100}
			Let $M$ be a finitely generated module. Then the \textit{dimension vector} of $M$ is $(\dim_k e_1M,\cdots,\dim_k e_nM)$. The category of representations (modules, respectively) of dimension vector $d$ is denoted by $\rep_d(\Lambda)$ ($\mod_d(\Lambda)$, respectively).
		\end{definition}
		\begin{convention}
			Let $d$ be a dimension vector.
			$\rep_d(\Lambda)$ and $\mod_d(\Lambda)$ are naturally equivalent. So we use them freely instead of each other. Indeed, we may consider a module as a representation and vice versa.
		\end{convention}
		\begin{notation}
			For modules $X$ and $Y$, we set
			\begin{tasks}[style=itemize](2)
				\task $\hom_{\Lambda}(X,Y)=\dim_k\Hom_{\Lambda}(X,Y)$
				\task $\ndo_{\Lambda}(X)=\dim_k\End_{\Lambda}(X)$
				\task $\ext^1_{\Lambda}(X,Y)=\dim_k\Ext^1_{\Lambda}(X,Y)$
				\task $\ext^1_{\Lambda}(X)=\dim_k\Ext^1_{\Lambda}(X,X)$
			\end{tasks} 
		\end{notation}
		\begin{definition}\label{def200}
			Let $\mathcal{X}$ be a topological space. A function $f:\mathcal{X}\rightarrow\mathbb{Z}$ is \textit{upper semi-continuous}, if for each $t\in\mathbb{Z}$ the subset
			\[\{x\in\mathcal{X}\mid f(x) < t\}\]
			is open in $\mathcal{X}$.
		\end{definition}
		\begin{namedthm}{Varieties of representations}
			For a dimension vector $d$,
			\[\rep_d(Q)=\prod_{\alpha\in Q_1}\Hom_{k}(k^{d_{s(\alpha)}},k^{d_{t(\alpha)}})\footnote{For an arrow $\alpha\in Q_1$, $t(\alpha)$ (respectively, $s(\alpha)$) is the vertex at the target of $\alpha$ (respectively, the source one).}\]
			is an affine variety whose points correspond to representations of $Q$.
			Moreover, $\rep_d(\Lambda)$ is a Zariski-closed sub-variety of $\rep_d(Q)$ that satisfies the relations in the ideal $I$.\\
			As well, the space of representations of dimension $d\in\mathbb{N}$
			\[\rep_{d}(\Lambda)=\Hom_{k\Alg}(\Lambda,\Mat_{d}(k))\]
			is an affine variety.
		\end{namedthm}
		\begin{lemma}\label{lem300}
			Consider dimension vectors (or any positive integers) $d$ and $d'$.
			The following functions are all upper semi-continuous.
			\begin{itemize}
				\item $\hom_{\Lambda}(-,?), \ext_{\Lambda}^1(-,?):\rep_d(\Lambda)\times\rep_{d'}(\Lambda)\rightarrow\mathbb{Z}$
				\item $\ndo_{\Lambda}(-), \ext_{\Lambda}^1(-):\rep_d(\Lambda)\rightarrow\mathbb{Z}$
			\end{itemize}
		\end{lemma}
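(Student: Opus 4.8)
The plan is to realize each of these dimension functions as $\dim\Ker$ (and, for $\ext^1$, a subquotient) of a $k$-linear map whose matrix entries are regular functions on the relevant product variety, and then to invoke the lower semi-continuity of matrix rank. I would treat the two indexing conventions (dimension vectors $d\in\mathbb{N}^n$ for quiver representations, and total dimensions $d\in\mathbb{N}$ for $\Hom_{k\Alg}(\Lambda,\Mat_d(k))$) uniformly, since in both cases a point is a $\Lambda$-module structure on a fixed vector space and $\Hom$/$\Ext$ are assembled from the same intertwining equations.

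First I would handle $\hom$. For $M\in\rep_d(\Lambda)$ and $N\in\rep_{d'}(\Lambda)$, a tuple $(\phi_i)_i$ of linear maps is a homomorphism exactly when it commutes with every arrow action, so $\Hom_{\Lambda}(M,N)$ is the kernel of
\[ d^{0}_{M,N}\colon \bigoplus_{i\in Q_0}\Hom_k(M_i,N_i)\longrightarrow\bigoplus_{\alpha\in Q_1}\Hom_k(M_{s(\alpha)},N_{t(\alpha)}),\qquad (\phi_i)_i\longmapsto(\phi_{t(\alpha)}M_\alpha-N_\alpha\phi_{s(\alpha)})_\alpha . \]
Its source $C^0$ and target $C^1$ have dimension depending only on $d,d'$ and $Q$, and every entry of $d^{0}_{M,N}$ is a polynomial in the coordinates $M_\alpha,N_\alpha$, hence a regular function on $\rep_d(\Lambda)\times\rep_{d'}(\Lambda)$. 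Since $\{(M,N):\operatorname{rank}d^{0}_{M,N}\ge r\}$ is open (non-vanishing of some $r\times r$ minor) and $\hom_{\Lambda}(M,N)=\dim C^{0}-\operatorname{rank}d^{0}_{M,N}$, the set $\{\hom_{\Lambda}<t\}$ is open, which is upper semi-continuity in the sense of Definition \ref{def200}.

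Next I would extend $d^{0}_{M,N}$ to a three-term complex $C^{0}\xrightarrow{d^{0}_{M,N}}C^{1}\xrightarrow{d^{1}_{M,N}}C^{2}$ with $C^{2}=\bigoplus_{\rho\in R}\Hom_k(M_{s(\rho)},N_{t(\rho)})$, obtained by applying $\Hom_{\Lambda}(-,N)$ to the first three terms of the standard projective resolution $\bigoplus_{\rho}\Lambda e_{t(\rho)}\otimes_k e_{s(\rho)}M\to\bigoplus_{\alpha}\Lambda e_{t(\alpha)}\otimes_k e_{s(\alpha)}M\to\bigoplus_{i}\Lambda e_i\otimes_k e_iM\to M\to 0$ and using the adjunction $\Hom_{\Lambda}(\Lambda e_j\otimes_k V,N)\cong\Hom_k(V,e_jN)$; here $R$ is a fixed minimal set of relations generating $I$. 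The terms again have constant dimension on the product variety and the differentials are polynomial in the coordinates of $(M,N)$. Granting that this sequence is a projective resolution in degrees $\le 1$ for every $M$, I get $\Ext^{1}_{\Lambda}(M,N)=\Ker d^{1}_{M,N}/\operatorname{im}d^{0}_{M,N}$, so
\[ \ext^{1}_{\Lambda}(M,N)=\dim\Ker d^{1}_{M,N}+\dim\Ker d^{0}_{M,N}-\dim C^{0}. \]
Both kernel dimensions are upper semi-continuous by the rank argument above, $\dim C^{0}$ is constant, and a finite sum of integer-valued upper semi-continuous functions is upper semi-continuous, giving the claim for $\ext^1_{\Lambda}(-,?)$. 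Finally, $\ndo_{\Lambda}$ and $\ext^{1}_{\Lambda}(-)$ are the pullbacks of $\hom_{\Lambda}(-,?)$ and $\ext^{1}_{\Lambda}(-,?)$ along the diagonal morphism $\Delta\colon\rep_d(\Lambda)\to\rep_d(\Lambda)\times\rep_d(\Lambda)$, $M\mapsto(M,M)$; as $\Delta$ is a morphism of varieties and preimages of opens under continuous maps are open, upper semi-continuity is inherited.

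The main obstacle is the uniformity claim used for $\ext^1$: that the chosen fixed-format complex computes $\Ext^1$ at \emph{every} point of $\rep_d(\Lambda)$, not merely a subspace of it. Exactness at the degree-$0$ term (that $\bigoplus_\alpha\Lambda e_{t(\alpha)}\otimes e_{s(\alpha)}M\to\bigoplus_i\Lambda e_i\otimes e_iM\to M\to 0$ is a presentation) is routine, but exactness at the degree-$1$ term — which is exactly what forces $\Ker d^{1}/\operatorname{im}d^{0}$ to equal $\Ext^1$ rather than merely surject onto it — requires $R$ to be a genuine minimal generating set of $I$ modulo $JI+IJ$, where $J$ is the arrow ideal. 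Establishing this (or else replacing the standard resolution by any functorial resolution with $M$-independent format) is the one genuinely algebra-specific input; for the self-contained alternative one may simply cite the semi-continuity statement from \cite{CB20}.
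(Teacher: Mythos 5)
The paper offers no proof of this lemma beyond the citation ``See \cite[Lemma 4.2]{CBSc02}'', and your argument is precisely the standard proof behind that reference: realize $\Hom_{\Lambda}(M,N)$ as the kernel of an intertwining map whose matrix entries are regular functions of the point, use lower semi-continuity of rank (openness of the non-vanishing of minors), and compute $\Ext^1$ as a subquotient of a fixed-format three-term complex obtained from the standard presentation, with the diagonal pullback handling $\ndo_{\Lambda}(-)$ and $\ext^1_{\Lambda}(-)$. The single input you flag --- exactness of $\bigoplus_{\rho\in R}\Lambda e_{t(\rho)}\otimes e_{s(\rho)}M\to\bigoplus_{\alpha}\Lambda e_{t(\alpha)}\otimes e_{s(\alpha)}M\to\bigoplus_{i}\Lambda e_i\otimes e_iM\to M\to 0$ at the degree-one term --- is the well-known fact about this presentation (valid for any generating set $R$ of the admissible ideal $I$; see Bongartz, or the account in \cite{CBSc02}), so your proposal is correct and consistent with the cited source.
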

		\begin{proof}
			See \cite[Lemma 4.2]{CBSc02}.
		\end{proof}
		\begin{remark}\label{rem400}
			Each of the upper semi-continuous functions in the above example admits a minimum value on given sub-varieties $\mathcal{X}\subseteq\rep_d(\Lambda)$ and $\mathcal{Y}\subseteq\rep_{d'}(\Lambda)$.
			\begin{tasks}[style=itemize]
				\task $\min\hom_{\Lambda}(\mathcal{X},\mathcal{Y}):=\min\{\hom_{\Lambda}(X,Y)\mid X\in\mathcal{X}, y\in\mathcal{Y}\}$
				\task $\min\ext^1_{\Lambda}(\mathcal{X},\mathcal{Y}):=\min\{\ext^1_{\Lambda}(X,Y)\mid X\in\mathcal{X}, y\in\mathcal{Y}\}$
				\task $\min\ndo_{\Lambda}(\mathcal{X}):=\min\{\ndo_{\Lambda}(X)\mid X\in\mathcal{X}\}$
				\task $\min\ext^1_{\Lambda}(\mathcal{X}):=\min\{\ext^1_{\Lambda}(X)\mid X\in\mathcal{X}\}$
			\end{tasks}
			So $\{(X,Y)\in\mathcal{X}\times\mathcal{Y}\mid\hom_{\Lambda}(X,Y)=\min\hom_{\Lambda}(\mathcal{X},\mathcal{Y})\}$ is an open subset of $\mathcal{X}\times\mathcal{Y}$. The similar statements hold for the other upper semi-continuous functions.
		\end{remark}
		\begin{definition}\label{def500}
			Let $\mathcal{X}$ be a variety.  Then ``\textit{a general element} of $\mathcal{X}$'' means an arbitrary element of a dense open subset of $\mathcal{X}$. For example, one can say that for a general module $M\in\rep_d(\Lambda)$, $\ndo_{\Lambda}(M)=\min\ndo_{\Lambda}(\mathcal{X})$.
		\end{definition}
		\begin{definition}\label{def600}
			A non-empty topological space $\mathcal{Z}$ is said to be \textit{irreducible}, if it can not be written as a union of two closed subsets, that is, $\mathcal{Z}=\mathcal{C}_1\cup\mathcal{C}_2$ implies that $\mathcal{Z}=\mathcal{C}_1$ or $\mathcal{Z}=\mathcal{C}_2$. Equivalently, $\mathcal{Z}$ is non-empty and any non-empty open subset of $\mathcal{Z}$ is dense.
		\end{definition}
		\begin{remark}\label{rem700}
			Let $\mathcal{Y}$ be an irreducible subset of a variety $\mathcal{X}$. Then the following statements hold.
			\begin{itemize}
				\item $\overline{\mathcal{Y}}$ is irreducible.
				\item Any non-empty open subset of $\mathcal{Y}$ is irreducible.
				\item For a morphism of varieties $f:\mathcal{X}\rightarrow\mathcal{X}'$, $f(\mathcal{Y})$ is irreducible.
			\end{itemize}
		\end{remark}
		\begin{defthm}\label{defthm800}
			A variety $\mathcal{X}$ can be written as a finite union of maximal irreducible (closed) subsets, in a unique way. These maximal irreducible (closed) subsets are called \textit{irreducible components}.
		\end{defthm}
		\begin{remark}\label{rem900}
			Let $\mathcal{X}$ be a variety. Then any irreducible subset of $\mathcal{X}$ is contained in a (not necessarily unique) irreducible component.
		\end{remark}
		\begin{definition}\label{def910}
			Let $f:\mathcal{X}\rightarrow\mathcal{Y}$ is a morphism of varieties. Then $f$ is said to be \textit{dominant}, if the image is dense in $\mathcal{Y}$.
		\end{definition}
		\begin{remark}\label{rem920}
			By definition, a composition of dominant morphisms is dominant as well.
		\end{remark}
		\begin{definition}\label{def1000}
			Let $\mathcal{X}$ be a variety. A \textit{constructible subset} is a finite union of locally closed subsets.
		\end{definition}
		\begin{lemma}\label{lem1100}
			Let $\mathcal{X}$ be an irreducible variety and $\mathcal{Y}$ a constructible subset. If $\overline{\mathcal{Y}}=\mathcal{X}$, then there is an open subset $\mathcal{U}$ of $\mathcal{X}$ such that $\mathcal{U}\subseteq\mathcal{Y}$.
		\end{lemma}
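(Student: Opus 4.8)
The plan is to reduce to a single locally closed piece of $\mathcal{Y}$ using the irreducibility of $\mathcal{X}$, and then to exploit the elementary fact that a locally closed set is open in its own closure.

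First I would unpack the definition of constructibility (Definition \ref{def1000}) and write $\mathcal{Y}=\bigcup_{i=1}^{m}\mathcal{Y}_i$ as a finite union of locally closed subsets, say $\mathcal{Y}_i=U_i\cap C_i$ with $U_i$ open and $C_i$ closed in $\mathcal{X}$. Taking closures and using that closure commutes with finite unions, the hypothesis $\overline{\mathcal{Y}}=\mathcal{X}$ gives $\mathcal{X}=\bigcup_{i=1}^{m}\overline{\mathcal{Y}_i}$. Since $\mathcal{X}$ is irreducible (Definition \ref{def600}) and each $\overline{\mathcal{Y}_i}$ is closed, one of the closed sets in this finite union must be all of $\mathcal{X}$ (this extends the two-set definition of irreducibility to a finite union by an immediate induction); fix an index $i$ with $\overline{\mathcal{Y}_i}=\mathcal{X}$.

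Next I would show that this $\mathcal{Y}_i$ is itself open in $\mathcal{X}$. The key observation is that a locally closed set is open in its closure: since $\overline{\mathcal{Y}_i}\subseteq C_i$, one checks $U_i\cap\overline{\mathcal{Y}_i}\subseteq U_i\cap C_i=\mathcal{Y}_i$, while obviously $\mathcal{Y}_i\subseteq U_i\cap\overline{\mathcal{Y}_i}$, so $\mathcal{Y}_i=U_i\cap\overline{\mathcal{Y}_i}$ is open in $\overline{\mathcal{Y}_i}$. Because $\overline{\mathcal{Y}_i}=\mathcal{X}$, being open in $\overline{\mathcal{Y}_i}$ is the same as being open in $\mathcal{X}$, so $\mathcal{Y}_i$ is a nonempty open subset of $\mathcal{X}$ (nonempty since its closure is the nonempty space $\mathcal{X}$). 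Setting $\mathcal{U}=\mathcal{Y}_i$ then finishes the argument, since $\mathcal{U}=\mathcal{Y}_i\subseteq\mathcal{Y}$.

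There is essentially no serious obstacle here; the only point requiring a moment of care is the identity $\mathcal{Y}_i=U_i\cap\overline{\mathcal{Y}_i}$, which relies on the fact that any closed set containing $\mathcal{Y}_i$ contains its closure, so that $\overline{\mathcal{Y}_i}\subseteq C_i$ can be used to pin down the intersection. Everything else is formal manipulation of the subspace topology together with the irreducibility hypothesis.
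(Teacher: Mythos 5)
Your proof is correct and complete: the reduction to a single locally closed piece via irreducibility, followed by the identity $\mathcal{Y}_i=U_i\cap\overline{\mathcal{Y}_i}$ (which, once $\overline{\mathcal{Y}_i}=\mathcal{X}$, makes $\mathcal{Y}_i=U_i$ open), is exactly the standard argument. The paper states this lemma without proof, treating it as a known fact, so there is nothing to compare against beyond confirming that your write-up supplies the expected details correctly.
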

		\begin{namedthm}{Chevalley's Constructible Theorem}\label{thm1200}
			Let $f:\mathcal{X}\rightarrow\mathcal{Y}$ be a morphism of varieties. Then $f(\mathcal{X})$ is constructible. Therefore, if $\mathcal{Y}$ is irreducible and $f$ is dominant, then a general element of $\mathcal{Y}$ is the image of a general element of $\mathcal{X}$.
		\end{namedthm}
		\begin{definition}\label{def1300}
			Let $\mathcal{X}$ be a variety. Then the \textit{dimension} of $\mathcal{X}$ is defined as follows.
			$$\scalemath{0.9}{\dim\mathcal{X}:=\sup\{l\in\mathbb{N}\mid\textit{$\exists\mathcal{Z}_0\subset\mathcal{Z}_1\subset\cdots\subset\mathcal{Z}_l$ of irreducible closed subsets of $\mathcal{X}$}\}}$$
		\end{definition}
		\begin{proposition}\label{prop1400}
			Let $\mathcal{X}$ be a variety and $\mathcal{Y}\subseteq\mathcal{X}$ be locally closed. Then the following statements hold.
			\begin{itemize}
				\item $\mathcal{X}$ is of finite dimension.
				\item $\dim\mathcal{Y}\le\dim\mathcal{X}$. Especially, if $\mathcal{Y}$ is proper closed and $\mathcal{X}$ is irreducible, then $\dim\mathcal{Y}<\dim\mathcal{X}$.
				\item Let $\mathcal{X}$ be irreducible. Then $\dim\mathcal{X}=\dim\mathcal{Y}$ if and only if $\overline{\mathcal{Y}}=\mathcal{X}$. In this case, $\mathcal{Y}$ is open (and thus dense).
			\end{itemize}
		\end{proposition}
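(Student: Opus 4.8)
The plan is to treat the three items separately, reducing each either to a direct manipulation of chains of irreducible closed subsets or to a single standard fact from the dimension theory of finitely generated algebras; I expect the only genuinely non-formal step to be the equality of dimensions on a dense open subset. For the finiteness of $\dim\mathcal{X}$, I would cover $\mathcal{X}$ by finitely many affine open charts $\mathcal{U}_1,\dots,\mathcal{U}_m$. Given any chain $\mathcal{Z}_0\subsetneq\cdots\subsetneq\mathcal{Z}_l$ of irreducible closed subsets of $\mathcal{X}$, I choose a chart $\mathcal{U}_j$ meeting $\mathcal{Z}_0$; then $\mathcal{U}_j$ meets every $\mathcal{Z}_i$, and each $\mathcal{Z}_i\cap\mathcal{U}_j$ is a nonempty open subset of the irreducible $\mathcal{Z}_i$, hence irreducible by Remark \ref{rem700} and dense by Definition \ref{def600}. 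Thus the $\mathcal{Z}_i\cap\mathcal{U}_j$ are pairwise distinct irreducible closed subsets of $\mathcal{U}_j$, giving $l\le\dim\mathcal{U}_j$. Since $\dim\mathcal{U}_j$ equals the Krull dimension of the coordinate ring $k[\mathcal{U}_j]$, which is finite for a finitely generated $k$-algebra (Noether normalization), we conclude $\dim\mathcal{X}\le\max_j\dim\mathcal{U}_j<\infty$.

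For the second item, given a chain $\mathcal{Z}_0\subsetneq\cdots\subsetneq\mathcal{Z}_l$ of irreducible closed subsets of $\mathcal{Y}$, I would pass to closures in $\mathcal{X}$. Because $\mathcal{Y}$ is locally closed, one checks $\overline{\mathcal{Z}_i}\cap\mathcal{Y}=\mathcal{Z}_i$ (closures taken in $\mathcal{X}$), so the $\overline{\mathcal{Z}_i}$ remain strictly increasing and are irreducible by Remark \ref{rem700}, whence $\dim\mathcal{Y}\le\dim\mathcal{X}$. When $\mathcal{Y}$ is moreover proper closed in the irreducible $\mathcal{X}$, a chain of length $\dim\mathcal{Y}$ in $\mathcal{Y}$ already consists of irreducible closed subsets of $\mathcal{X}$ whose top member lies in $\mathcal{Y}\subsetneq\mathcal{X}$; appending $\mathcal{X}$ itself produces a chain of length $\dim\mathcal{Y}+1$, so $\dim\mathcal{Y}<\dim\mathcal{X}$. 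I would also record here the elementary observation that a locally closed set is open in its own closure; hence if $\overline{\mathcal{Y}}=\mathcal{X}$ then $\mathcal{Y}$ is open in $\mathcal{X}$, and a nonempty open subset of the irreducible $\mathcal{X}$ is dense, which already settles the final clause of the third item.

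For the third item I would isolate the \emph{key lemma} that a nonempty open subset $\mathcal{U}$ of an irreducible variety satisfies $\dim\mathcal{U}=\dim\mathcal{X}$; combined with the second item this is the crux. The direction $\dim\mathcal{Y}=\dim\mathcal{X}\Rightarrow\overline{\mathcal{Y}}=\mathcal{X}$ needs no new input: if $\overline{\mathcal{Y}}$ were proper in $\mathcal{X}$, the second item would give $\dim\mathcal{Y}\le\dim\overline{\mathcal{Y}}<\dim\mathcal{X}$, a contradiction. Conversely, if $\overline{\mathcal{Y}}=\mathcal{X}$ then, as noted, $\mathcal{Y}$ is a nonempty open subset of $\mathcal{X}$, and the key lemma yields $\dim\mathcal{Y}=\dim\mathcal{X}$. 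The main obstacle is precisely this key lemma: pure point-set topology gives only $\dim\mathcal{U}\le\dim\mathcal{X}$, and the reverse inequality requires producing a maximal chain of irreducible closed subsets of full length $\dim\mathcal{X}$ through a closed point of $\mathcal{U}$. I would establish this by reducing to an affine chart and invoking that a finitely generated domain over $k$ is catenary with dimension equal to $\operatorname{trdeg}_k$ of its fraction field, which is unchanged upon restricting to a nonempty open subset (equivalently, upon localizing), so that every such maximal chain has length $\dim\mathcal{X}$.
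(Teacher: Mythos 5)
The paper offers no proof of Proposition \ref{prop1400}: it is stated in the preliminaries as a standard fact from dimension theory, with non-expert readers referred to Crawley-Boevey's note. So there is no argument of the paper's to compare against; judged on its own, your proof is correct and is the standard one. The chain-restriction argument for finiteness (intersecting a chain with an affine chart meeting its smallest member and using density of nonempty opens in irreducibles to keep the terms distinct), the closure argument for $\dim\mathcal{Y}\le\dim\mathcal{X}$, and the observation that a locally closed set is open in its closure are all sound, and you correctly identify the one step that cannot be done by point-set topology alone, namely $\dim\mathcal{U}=\dim\mathcal{X}$ for a nonempty open $\mathcal{U}$ in an irreducible $\mathcal{X}$. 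One small simplification: once you know that every nonempty affine open of an irreducible variety has dimension $\operatorname{trdeg}_k k(\mathcal{X})$ (dimension of a finitely generated domain equals the transcendence degree of its fraction field, a birational invariant), the key lemma follows by choosing an affine open inside $\mathcal{U}$ and combining with your second item; the appeal to catenarity and to maximal chains through a prescribed closed point is not needed.
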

		\begin{remark}\label{rem1500}
			A finite dimensional vector space $\mathcal{X}$ over $k$ is an irreducible affine variety and $\dim\mathcal{X}=\dim_k\mathcal{X}$. In particular, for modules $X$ and $Y$, $\Hom_{\Lambda}(X,Y)$ is an irreducible affine variety of dimension $\hom_{\Lambda}(X,Y)$.
		\end{remark}
		\begin{definition}\label{def1600}
			For $l\in\mathbb{N}$, consider the variety of $l\times l$ matrices $\Mat_l(k)$. The \textit{general linear group} $\GL_l(k):=\{A\in\Mat_l(k)\mid \det A\neq 0\}$ is an open sub-variety of $\Mat_l(k)$. Moreover, for a dimension vector $d=(d_1,\cdots,d_n)$, we define an affine algebraic group\footnote{An \textit{algebraic group} is a group which is also a variety with the property that the multiplication and inversion maps are morphisms of varieties.}
			$$\GL_d(k):=\prod_{i=1}^n\GL_{d_i}(k).$$
		\end{definition}
		\begin{namedthm}{Actions by conjugation}\label{def1800}
			Consider a dimension vector $d$.
			$\GL_d(k)$ acts on $\rep_d(\Lambda)$ by conjugation as follows:\\ For $\mathfrak{g}=(\mathfrak{g}_1,\cdots,\mathfrak{g}_n)\in\GL_d(k)$ and $M\in\rep_d(\Lambda)$,
			\begin{itemize}
				\item $(\mathfrak{g}\cdot M)_i=M_i$, for each vertex $i$ of $Q$, and
				\item $(\mathfrak{g}\cdot M)_{\alpha}=\mathfrak{g}_{t(\alpha)}\circ M_{\alpha}\circ\mathfrak{g}_{s(\alpha)}^{-1}$, for each arrow $\alpha$ of $Q$.
			\end{itemize}
			Now, consider representations $M$ and $M'$ with dimension vectors $d$ and $d'$, respectively. $\GL_d(k)\times\GL_{d'}(k)$ acts on $\Hom_{\Lambda}(M,M')$ by conjugation as follows:\\
			Consider $\mathfrak{g}=(\mathfrak{g}_1,\cdots,\mathfrak{g}_n)\in\GL_d(k)$, $\mathfrak{g}'=(\mathfrak{g}_1',\cdots,\mathfrak{g}_n')\in\GL_{d'}(k)$ and $a=(a_1,\cdots,a_n)\in\Hom_{\Lambda}(M,M')$. Then we define
			$$((\mathfrak{g},\mathfrak{g}')\cdot a)_i:=\mathfrak{g'}_i\circ a_i\circ\mathfrak{g}_i^{-1}$$
			for each $i\in Q_0$.
		\end{namedthm}
		\begin{namedthm}{Orbits}\label{prop1900}
			Let $\mathfrak{G}$ be an (affine) algebraic group and $\mathcal{X}$ be a variety. Assume that $\mathfrak{G}$ acts on $\mathcal{X}$. For $x\in\mathcal{X}$, the following statements hold.
			\begin{itemize}
				\item The orbit $\mathcal{O}_x$ is a locally closed and irreducible subset of $\mathcal{X}$.
				\item The stabilizer $\Stab_{\mathfrak{G}}(x):=\{\mathfrak{g}\in \mathfrak{G}\mid \mathfrak{g}\cdot x=x\}$ is a closed subset of $\mathfrak{G}$.
				\item $\dim\mathcal{O}_x=\dim \mathfrak{G}-\dim\Stab_{\mathfrak{G}}(x)$.
			\end{itemize}
			Consider a dimension vector $d=(d_1,\cdots,d_n)$ and the action of $\GL_d(k)$ on $\rep_d(\Lambda)$ by conjugation. Then for a representation $M\in\rep_d(\Lambda)$,
			$$\dim\mathcal{O}_M=\dim\GL_d(k)-\dim\Aut_{\Lambda}(M).$$
			But $\Aut_{\Lambda}(M)=\GL_{\mathbf{d}}(k)\cap\End_{\Lambda}(M)\subseteq\Mat_{\mathbf{d}}(k)$, where $\mathbf{d}=\Sigma_{i=1}^n d_i$. Hence $\Aut_{\Lambda}(M)$ is an open sub-variety of $\End_{\Lambda}(M)$. Thus $\dim\Aut_{\Lambda}(M)=\ndo_{\Lambda}(M)$. Therefore,
			$$\dim\mathcal{O}_M=\dim\GL_d(k)-\ndo_{\Lambda}(M)=\langle d,d \rangle -\ndo_{\Lambda}(M).$$
			So by upper semi-continuity of $\ndo_{\Lambda}(-)$, it is easy to see that $$\codim\mathcal{O}_{-}:\rep_d(\Lambda)\rightarrow\mathbb{Z}$$
			is upper semi-continuous. Immediately, for an irreducible component $\mathcal{Z}$ of $\rep_d(\Lambda)$,
			$$\{X\in\mathcal{Z}\mid\codim_{\mathcal{Z}}\mathcal{O}_X=\min_{M\in\mathcal{Z}}\codim_{\mathcal{Z}}\mathcal{O}_M\}$$
			is an open dense subset of $\mathcal{Z}$.
		\end{namedthm}
		\begin{theorem}\label{thm2100}
			Consider a dimension vector (or any positive integer) $d$.
			All irreducible components of $\rep_d(\Lambda)$ are $\GL_d(k)$-stable, that is, they are closed under the action of $\GL_d(k)$ on $\rep_d(\Lambda)$.
		\end{theorem}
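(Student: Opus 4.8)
The plan is to show that each irreducible component is carried into itself by the conjugation action, exploiting that $\GL_d(k)$ is connected --- indeed irreducible as a variety --- and that the image of an irreducible set under a morphism stays irreducible, so a connected group has no room to shuffle the finitely many components.

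First I would record that $\GL_d(k)$ is an irreducible variety. Each factor $\GL_{d_i}(k)$ is a nonempty open subvariety of the affine space $\Mat_{d_i}(k)\cong k^{d_i^2}$ by Definition \ref{def1600}; since affine space is irreducible (Remark \ref{rem1500}) and a nonempty open subset of an irreducible variety is irreducible, each $\GL_{d_i}(k)$ is irreducible. A finite product of irreducible varieties over the algebraically closed field $k$ is again irreducible, so $\GL_d(k)=\prod_{i=1}^n \GL_{d_i}(k)$ is irreducible.

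Next, fix an irreducible component $\mathcal{Z}$ of $\rep_d(\Lambda)$ and restrict the action morphism from the Actions by conjugation paragraph \ref{def1800} to $\GL_d(k)\times\mathcal{Z}\to\rep_d(\Lambda)$, $(\mathfrak{g},M)\mapsto\mathfrak{g}\cdot M$. The domain is a product of two irreducible varieties, hence irreducible; by Remark \ref{rem700} its image $\GL_d(k)\cdot\mathcal{Z}$ is irreducible, and so is its closure $\overline{\GL_d(k)\cdot\mathcal{Z}}$. I would then identify this closure with $\mathcal{Z}$ itself: because the identity $\mathfrak{g}=1$ acts trivially, $\mathcal{Z}\subseteq\GL_d(k)\cdot\mathcal{Z}\subseteq\overline{\GL_d(k)\cdot\mathcal{Z}}$, and the latter is an irreducible closed subset containing $\mathcal{Z}$. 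Since $\mathcal{Z}$ is a maximal irreducible closed subset by Definition-Theorem \ref{defthm800}, maximality forces $\mathcal{Z}=\overline{\GL_d(k)\cdot\mathcal{Z}}$. Hence $\GL_d(k)\cdot\mathcal{Z}\subseteq\mathcal{Z}$, which is exactly the asserted stability (and stability under all of $\GL_d(k)$, together with closure under inverses, upgrades the inclusion to equality $\GL_d(k)\cdot\mathcal{Z}=\mathcal{Z}$).

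The main obstacle I anticipate is not the component-chasing but the foundational input that $\GL_d(k)$ is irreducible and that a product of irreducible varieties remains irreducible: it is precisely here that connectedness of the group and the algebraically closed hypothesis on $k$ do the real work. Over a disconnected group the argument would break, since distinct connected components could a priori permute the irreducible components $\mathcal{Z}_i$ among themselves; the whole point is that irreducibility of $\GL_d(k)$ rules this out.
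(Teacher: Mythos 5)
Your proposal is correct and follows essentially the same route as the paper: both arguments observe that $\GL_d(k)\times\mathcal{Z}$ is irreducible, hence the image of the action morphism is irreducible, and then use maximality of the irreducible component $\mathcal{Z}$ (which the image contains, via the identity element) to conclude $\GL_d(k)\cdot\mathcal{Z}=\mathcal{Z}$. You simply spell out the irreducibility of $\GL_d(k)$ and the containment $\mathcal{Z}\subseteq\GL_d(k)\cdot\mathcal{Z}$ more explicitly than the paper does.
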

		\begin{proof}
			Consider an irreducible component $\mathcal{Z}$ of $\rep_d(\Lambda)$.
			Since $\GL_d(k)$ and $\mathcal{Z}$ are irreducible, the image of the morphism $\GL_d(k)\times\mathcal{Z}\rightarrow\rep_d(\Lambda)$ is irreducible, as well. So it must be contained in an irreducible component of $\rep_d(\Lambda)$. Therefore, $\GL_d(k)\cdot\mathcal{Z}=\mathcal{Z}$.
		\end{proof}
		\begin{corollary}\label{cor2200}
			Consider a dimension vector (or any positive integer) $d$. $\rep_d(\Lambda)$ is a union of  finite number of $\GL_d(k)$-stable irreducible components.
		\end{corollary}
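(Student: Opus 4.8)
The plan is to combine the two results that immediately precede the statement; no new ingredient is needed. First I would recall from the named theorem ``Varieties of representations'' that $\rep_d(\Lambda)$ is an affine variety, so the Definition-Theorem \ref{defthm800} applies verbatim: the space $\rep_d(\Lambda)$ decomposes, uniquely, as a finite union $\rep_d(\Lambda)=\mathcal{Z}_1\cup\cdots\cup\mathcal{Z}_m$ of maximal irreducible closed subsets, namely its irreducible components. This already delivers the finiteness half of the assertion.

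The second step invokes Theorem \ref{thm2100}, which states that every irreducible component of $\rep_d(\Lambda)$ is stable under the conjugation action of $\GL_d(k)$. Applying this to each $\mathcal{Z}_i$ appearing in the decomposition above gives $\GL_d(k)\cdot\mathcal{Z}_i=\mathcal{Z}_i$ for every $i$, so the finite union exhibiting $\rep_d(\Lambda)$ is precisely a union of $\GL_d(k)$-stable irreducible components, as claimed.

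There is essentially no obstacle to overcome here: all the content is carried by Theorem \ref{thm2100}, whose proof in turn rests on the irreducibility of $\GL_d(k)$ together with the fact that the image of the action morphism $\GL_d(k)\times\mathcal{Z}_i\rightarrow\rep_d(\Lambda)$ is irreducible and hence lands in a single component. The only point that warrants a moment's care is the parenthetical ``or any positive integer $d$'', which refers to the alternative description $\rep_d(\Lambda)=\Hom_{k\Alg}(\Lambda,\Mat_d(k))$; this is likewise an affine variety equipped with a $\GL_d(k)$-action by conjugation, and since both \ref{defthm800} and \ref{thm2100} are stated to cover this case as well, the same two-line argument applies without change.
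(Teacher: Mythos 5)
Your proposal is correct and matches the paper's intended argument exactly: the corollary is stated without proof precisely because it follows immediately by combining Definition-Theorem \ref{defthm800} (a variety is a finite union of its irreducible components) with Theorem \ref{thm2100} (each such component of $\rep_d(\Lambda)$ is $\GL_d(k)$-stable). Nothing further is needed.
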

	\section{$\mathrm{g}$-vectors}
		In this section, we introduce the non-decreasing condition on g-vectors and finite dimensional algebras. We prove that under the non-decreasing condition on $g$, the number of the generically indecomposable direct summands of $g$ never decreases, after multiplications.
		Then we will see that if $g$ satisfies the non-decreasing condition, then $\ind(g)$ is linearly independent. Moreover, we show that the converse of the above statements also hold, when $\ind(tg)$ is linearly independent, for each $t\in\mathbb{N}$.
		\begin{convention}
			To simplify some expressions, in this paper, we always assume that $K_0(\proj\Lambda)=K_0(K^b(\proj\Lambda))$. By abuse of notation, the elements of $K_0(\mod\Lambda)_\mathbb{R}$ and $K_0(\proj\Lambda)_\mathbb{R}$  are also assumed to be vectors in $\mathbb{R}^n$.
		\end{convention}
		\begin{definition}\label{def2300}
			An element of $K_0(\proj\Lambda)$ is called a \textit{g-vector}.
			Consider a complex $\mathrm{P}=\cdots\rightarrow P^{-1}\rightarrow P^0\rightarrow P^1\rightarrow\cdots\in K^b(\proj\Lambda)$. We set
			$$g^{\mathrm{P}}:=\sum_{i\in\mathbb{Z}}(-1)^i[P^i]\in K_0(\proj\Lambda)\cong\mathbb{Z}^n$$
			as the g-vector of $\mathrm{P}$.
			Moreover, if $\mathrm{P}=P^{-1}\rightarrow P^0$ is a minimal projective presentation of a module $M$, then we assign a g-vector to $M$ as follows: $$g^M:=g^{\mathrm{P}}\in K_0(\proj\Lambda)$$
		\end{definition}
		\begin{remark}\label{rem2400}
			Let $M$ be a module and
			$g^M=(g_1 , g_2 , \cdots , g_n)$. Then for each $1\le i\le n$,
			$$g_i=\hom_{\Lambda}(M,S_{(i)})-\ext^1_{\Lambda}(M,S_{(i)}).$$
			Moreover, by \cite[Corollary 1.3]{GLFS23}, $-g_i:\rep_d(\Lambda)\rightarrow\mathbb{Z}$ is upper semi-continuous, for any dimension vector (or any positive integer) $d$.
		\end{remark}
		\begin{convention}
			For projective modules $P^
			{-1}$ and $P^0$, one can consider each morphism in $\Hom_\Lambda(P^{-1},P^0)$ as a two term complex in $K^b(\proj\Lambda)$.
		\end{convention}
		\begin{remark}\label{rem2500}
			Let $g$ be a g-vector. Then there exist unique finitely generated projective modules $P^{g+}$ and $P^{g-}$ without any non-zero common direct summands such that $g=[P^{g+}]-[P^{g-}]$.
		\end{remark}
		\begin{notation}
			Let $g$ be a g-vector. $\Hom_\Lambda(g)$ stands for $\Hom_\Lambda(P^{g-},P^{g+})$.
		\end{notation}
		E-invariants firstly were defined for finite dimensional decorated representations of Jacobian algebras \cite{DWZ10} to solve several conjectures in cluster theory. Then it was generalized for finite dimensional modules in \cite{AIR14} and used to generalize some results of the preceding paper. As well, in order to construct a decomposition of presentation spaces, Derksen and Fei \cite{DeFe15} introduced E-invariants for g-vectors.
		\begin{definition}\label{def2600}
			Let $g$ and $h$ be g-vectors. The \textit{E-invariant} of a pair $(a,b)\in\Hom_{\Lambda}(g)\times\Hom_{\Lambda}(h)$ is
			$$e(a,b):=\hom_{K^b(\proj\Lambda)}(a,b[1]).$$
			Moreover, the E-invariant of the pair $(g,h)$ is
			$$e(g,h):=\min\{e(a,b)\mid a\in\Hom_{\Lambda}(g), b\in\Hom_{\Lambda}(h)\}.$$
			Since $e(?,-):\Hom(g)\times\Hom(h)\rightarrow\mathbb{Z}$ is upper semi-continuous \cite[Corollary 3.7]{DeFe15}, $e(g,h)=e(a,b)$ for a general element $(a,b)\in\Hom(g)\times\Hom(h)$.
		\end{definition}
		\begin{namedthm}{Direct sums of g-vectors}\label{defthm2700}
			Consider g-vectors $g$ and $h$. Then $e(g,h)=0=e(h,g)$, if and only if a general element $a\in\Hom_{\Lambda}(g+h)$ can be written as a direct sum $a=b\oplus c$, where $b\in\Hom_{\Lambda}(g)$ and $c\in\Hom_{\Lambda}(h)$.
			In this case, we say $g$ and $h$ are direct summands of $g+h$, and denote $g+h=g\oplus h$.
		\end{namedthm}
		\begin{proof}
			It follows from \cite[Corollary 4.2]{DeFe15}.
		\end{proof}
		\begin{definition}\label{def2800}
			Let $g$ be a g-vector. It is said to be \textit{tame}, if $2g=g\oplus g$. Otherwise, it is called \textit{wild}.
		\end{definition}
		It is easy to see that g-vectors of $\tau$-rigid modules are tame. Moreover, by \cite[Proof of Theorem 3.8]{PYK23}, if $\Lambda$ is tame, then it is \textit{E-tame}, that is, all g-vectors are tame \cite[Definition 6.3]{AsIy23}.
		\begin{remark}\label{rem2900}
			Let $g$ and $h$ be g-vectors.
			If $g+h=g\oplus h$, then so is $tg+sh=tg\oplus sh$, for $t,s\in\mathbb{N}$. In particular, if $g$ is tame, then so is $tg$ for all $t\in\mathbb{N}$.
		\end{remark}
		\begin{remark}\label{rem3000}
			Let $g$ and $h$ be g-vectors. If $g$ is positive or $h$ negative, then $e(g,h)=0$. In particular,
			negative and positive g-vectors are tame.
		\end{remark}
		\begin{proposition}\label{prop3100}
			Consider g-vectors $g$ and $h$. Then $g+h=g\oplus h$ if and only if there exist $a\in\Hom_{\Lambda}(g)$ and $b\in\Hom_{\Lambda}(h)$ such that $e(a,b)=0=e(b,a)$.
			In particular, $g$ is tame if and only if there exist non-zero $a,a'\in\Hom(g)$ with $e(a,a')=0=e(a',a)$.
		\end{proposition}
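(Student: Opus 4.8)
The plan is to read off the first equivalence from Definition-Theorem \ref{defthm2700}, which already records that $g+h=g\oplus h$ means exactly $e(g,h)=0=e(h,g)$; the real content is then to trade these two \emph{separate} minima for a \emph{single} pair $(a,b)$ realizing both at once. The backward direction is immediate: if $e(a,b)=0=e(b,a)$ for some $a\in\Hom_\Lambda(g)$ and $b\in\Hom_\Lambda(h)$, then since $e(g,h)$ is by Definition \ref{def2600} the minimum of $(a',b')\mapsto e(a',b')$ and each value $e(a',b')=\hom_{K^b(\proj\Lambda)}(a',b'[1])$ is a dimension, hence $\ge 0$, we get $0\le e(g,h)\le e(a,b)=0$; symmetrically $e(h,g)=0$, so $g+h=g\oplus h$.

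For the forward direction I would work on the product $\Hom_\Lambda(g)\times\Hom_\Lambda(h)$. Each factor is a Hom space, hence an irreducible affine variety by Remark \ref{rem1500}, so the product is irreducible. Both $(a,b)\mapsto e(a,b)$ and $(a,b)\mapsto e(b,a)$ are upper semi-continuous---the first by Definition \ref{def2600} (citing \cite[Corollary 3.7]{DeFe15}), the second by the same result precomposed with the swap isomorphism---so by Remark \ref{rem400} the minimizing loci
$$U=\{(a,b)\mid e(a,b)=e(g,h)\},\qquad V=\{(a,b)\mid e(b,a)=e(h,g)\}$$
are non-empty and open. In the irreducible variety $\Hom_\Lambda(g)\times\Hom_\Lambda(h)$ any two non-empty open subsets meet (Definition \ref{def600}), so any $(a,b)\in U\cap V$ gives $e(a,b)=e(g,h)=0$ and $e(b,a)=e(h,g)=0$, as wanted.

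The \emph{in particular} clause is the case $h=g$: by Definition \ref{def2800}, $g$ is tame iff $2g=g\oplus g$, which by the first part holds iff some $a,a'\in\Hom_\Lambda(g)$ satisfy $e(a,a')=0=e(a',a)$. When $g$ is neither positive nor negative we have $\Hom_\Lambda(g)\neq 0$, so $\{a=0\}\cup\{a'=0\}$ is a proper closed subset of the irreducible variety $\Hom_\Lambda(g)\times\Hom_\Lambda(g)$; since the witnessing set $U\cap V$ is non-empty open, hence dense, it cannot lie inside that proper closed subset, and we may pick $a,a'$ both non-zero. The main obstacle throughout is exactly the forward direction of the first equivalence: the two E-invariant minima have no a priori reason to be attained at a common point, and the only leverage for forcing simultaneity is the topological argument above---upper semi-continuity to make the minimizing loci open, together with irreducibility of the product of Hom spaces to make them intersect.
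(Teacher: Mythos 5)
Your proof is correct, but the forward direction takes a genuinely different route from the paper's. The paper fixes a single $b\in\Hom_{\Lambda}(h)$ (together with some $a''$ satisfying $e(b,a'')=0$) and then varies only the first coordinate: it invokes the module-theoretic reformulation of the vanishing of $e(a',b)$ and $e(b,a')$ from \cite[Lemma 3.10 and Proposition 3.11]{AsIy23} to produce two open subsets $\mathcal{U},\mathcal{V}$ of the single irreducible variety $\Hom_{\Lambda}(g)$, and picks $a\in\mathcal{U}\cap\mathcal{V}$. You instead work on the product $\Hom_{\Lambda}(g)\times\Hom_{\Lambda}(h)$ and intersect the two minimizing loci of the upper semi-continuous functions $(a,b)\mapsto e(a,b)$ and $(a,b)\mapsto e(b,a)$, each of which is non-empty (the minimum is attained by definition of $e(g,h)$, resp.\ $e(h,g)$) and open. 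Your version is more self-contained: it uses only the upper semi-continuity already recorded in Definition \ref{def2600} and the irreducibility of Hom spaces (Remark \ref{rem1500}), with no external input from \cite{AsIy23}, and it also sidesteps the need to verify that the paper's sets $\mathcal{U}$ and $\mathcal{V}$ are non-empty for the particular chosen $b$ --- a point the paper's proof leaves implicit. Both arguments rest on the same underlying mechanism (two dense open loci in an irreducible variety must meet); you simply apply it on the product rather than on one factor.

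One caveat concerning the ``in particular'' clause, which the paper's own proof does not address at all: to produce \emph{non-zero} witnesses you assert that $\Hom_{\Lambda}(g)\neq 0$ whenever $g$ is neither positive nor negative. This is not automatic, since $\Hom_{\Lambda}(P^{g-},P^{g+})$ can vanish even when both $P^{g+}$ and $P^{g-}$ are non-zero (e.g.\ $g=[P_{(1)}]-[P_{(3)}]$ for $1\xrightarrow{\alpha}2\xrightarrow{\beta}3$ with $\beta\alpha=0$); for such $g$ the space $\Hom_{\Lambda}(2g)$ is zero, so $g$ is tame while no non-zero $a,a'$ exist. This is a defect of the statement as written rather than of your argument, but your justification of the non-vanishing should not be presented as valid in general.
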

		\begin{proof}
			Assume that $g+h=g\oplus h$. Then one can find $a''\in\Hom_{\Lambda}(g)$ and $b\in\Hom_{\Lambda}(h)$ with $e(b,a'')=0$.
			By \cite[Lemma 3.10]{AsIy23}, the subsets
			$$\mathcal{U}=\{a'\in\Hom_{\Lambda}(g)\mid\Coker(b)\in\prescript{\perp}{}{\Ker(\nu a')}\}\footnote{$\nu=D\Hom_{\Lambda}(-,\Lambda):\rep(\Lambda)\rightarrow\rep(\Lambda)$},$$
			$$\mathcal{V}=\{a'\in\Hom_{\Lambda}(g)\mid\Ker(\nu b)\in \Coker(a')^{\perp}\}$$
			 of $\Hom_{\Lambda}(g)$ are open.
			Since $\Hom_{\Lambda}(g)$ is irreducible, there is $a\in \mathcal{U}\cap \mathcal{V}$. So by \cite[Proposition 3.11]{AsIy23}, $e(a,b)=0=e(b,a)$.
		\end{proof}
		\begin{remark}\label{rem3200}
			Assume that for g-vectors $g$ and $g'$, $g+g'=g\oplus g'$. Then for a g-vector $h$, we have
			$$e(g\oplus g',h)=e(g,h)+e(g',h),$$
			$$e(h,g\oplus g')=e(h,g)+e(h,g').$$
			Therefore, one can see that $g\oplus g'$ is tame if and only if both of $g$ and $g'$ are tame.
		\end{remark}
		\begin{definition}\label{def3300}
			Consider a g-vector $g$. It is said to be \textit{generically indecomposasble}, if a general element of $\Hom_{\Lambda}(g)$ is indecomposable.
		\end{definition}
		\begin{namedthm}{Generic decompositions of g-vectors}\label{defthm3400}
			Let $g$ be a g-vector. Then there exist unique (up to reordering) generically indecomposable g-vectors $g_1,g_2,\cdots,g_s$ such that
			$$g=g_1\oplus g_2\oplus\cdots\oplus g_s.$$
			This decomposition is called the \textit{generic decomposition} of $g$.
		\end{namedthm}
		\begin{proof}
			See \cite[Theorem 4.4]{DeFe15} or \cite[Theorem 2.7]{Pla13}.
		\end{proof}
		\begin{remark}\label{rem3500}
			By the above assumption,
			for a general $a\in\Hom_{\Lambda}(g)$, there exist general (indecomposable) projective presentations $a_i\in\Hom_{\Lambda}(g_i), 1\le i\le s$, such that $a=a_1\oplus\cdots\oplus a_s$.
		\end{remark}
		To study algebras through g-vectors, we concentrate on the building blocks of g-vectors, namely, generically indecomposable direct summands. So in the rest of this section, we study some conditions on g-vectors and generic decompositions of g-vectors.
		\begin{notation}\label{not3600}
			Let $g$ be a g-vector. The set of generically indecomposable direct summands of $g$ is denoted by $\ind(g)$. As well,  we set $\smd(g)$ (respectively, $\add(g)$) for the set of all direct summands (respectively, the set of all direct summands of direct sums) of $g$.
		\end{notation}
		\begin{remark}\label{rem3700}
			Let $g$ be a tame g-vector and $g=g_1\oplus g_2\oplus\cdots\oplus g_s$ be its generic decomposition. Then it is easy to see that $tg=g_1^{\oplus t}\oplus g_2^{\oplus t}\oplus\cdots\oplus g_s^{\oplus t}$ is also the generic decomposition of $tg$, for all $t\in\mathbb{N}$. Therefore, $|\ind(g)|=|\ind(tg)|$, for all $t\in\mathbb{N}$. Furthermore, by uniqueness of generic decompositions, $\{g_1,\cdots,g_s\}$ is linearly independent \cite[Proposition 6.7(b)]{AsIy23}.
		\end{remark}
		\begin{remark}\label{rem3750}
			Let $\mathrm{P}\in K^b(\proj\Lambda)$ be a $2$-term pre-silting complex. Then we set $\mathcal{C}^\circ(\mathrm{P})$ (respectively, $\mathcal{C}(\mathrm{P})$), for the open cone (respectively, closed cone) generated by the generically indecomposable direct summands of $g^{\mathrm{P}}$.
			Now, consider the wall and chamber structure of $\Lambda$ defined in \cite[Definition 3.3]{BST19}. By \cite[Theorem 3.17]{As21},
			\[K_0(\proj\Lambda)_{\mathbb{Q}}\setminus\Walls=\coprod_{\scalemath{0.6}{\mathrm{P}\in\twosilt\Lambda}}\mathcal{C}^{\circ}(\mathrm{P})_{\mathbb{Q}}.\footnote{$\twosilt\Lambda$ stands for the set of all 2-term silting complexes in $K^b(\proj\Lambda)$.}\]
			\begin{itemize}
				\item If a g-vector $g\in\mathcal{C}(\mathrm{P})$ for some $2$-term silting complex $\mathrm{P}$, then according to \cite[Theorem 2.27]{AiIy12}, it is tame and thus $\ind(g)$ is linearly independent. Moreover, if $g\in\mathcal{C}^{\circ}(\mathrm{P})$, then $|\ind(g)|=n$.
				\item In the case that
				\[g\in\Walls\setminus\coprod_{\scalemath{0.6}{\mathrm{P}\in\twosilt\Lambda}}\mathcal{C}(\mathrm{P})\]
				we don't know if $\ind(g)$ is linearly independent, in general. In addition, it is probable for $g$ to lose wildness or even it may happen that $|\ind(tg)|\neq|\ind(g)|$, for some $t\in\mathbb{N}$.
				In \cite[Question 4.7]{DeFe15}, it was asked that if there is a generically indecomposable wild g-vector $g$ such that $tg$ is not so, for some $t\in\mathbb{N}$.
			\end{itemize}
			Hence, it is reasonable to study about the following questions.
			\begin{enumerate}
				\item Is there any generically indecomposable wild g-vector $g$ such that $tg$ is not indecomposable, for some $t\in\mathbb{N}$?
				\item Is there any wild g-vector $g$ such that $tg$ is tame, for some $t\in\mathbb{N}$?  
			\end{enumerate}
			For the first question, in \cite[Example 5.9]{AsIy23}, there is an example of generically indecomposable wild g-vector $g$ such that $tg$ is not indecomposable for all integers $t\ge 2$. So $|\ind(g)|<|\ind(tg)|$, for all $t\ge 2$. But there is no answer for the second question, up to our knowledge.
		\end{remark}
		\begin{definition}\label{def3800}
			Let $g$ be a g-vector. We say that
			\begin{itemize}
				\item \cite[Definition 5.1]{AsIy23} $g$ satisfies the \textit{ray condition}, if $th$ is generically indecomposable (thus wild), for all $t\in\mathbb{N}$ and wild $h\in\ind(g)$.
				\item $g$ satisfies the \textit{regularity condition}, if $th$ is wild, for all $t\in\mathbb{N}$ and wild $h\in\ind(g)$.
			\end{itemize}
			We say $\Lambda$ satisfies the ray condition (regularity condition, respectively) if all g-vectors satisfy the ray condition (regularity condition, respectively).
		\end{definition}
		\begin{remark}\label{rem3900}
			Let $g$ be a g-vector. Then the following statements are equivalent.
			\begin{enumerate}
				\item[$(1)$] $g$ satisfies the ray condition.
				\item[$(2)$] $|\ind(h)|=|\ind(th)|$, for all $h\in\smd(g)$ and $t\in\mathbb{N}$.
			\end{enumerate}
			In particular, $\Lambda$ satisfies the ray condition if and only if $|\ind(g)|=|\ind(tg)|$, for all g-vectors $g$ and $t\in\mathbb{N}$.
		\end{remark}
		\begin{definition}\label{def4000}
			Consider a g-vector $g$. We define
			\begin{itemize}
				\item $D_g:=\{h\in K_0(\proj\Lambda)\mid\exists t\in\mathbb{N}, g+th=g\oplus th\}$
				\item $D_{\mathbb{N}g}:=\bigcup_{r\in\mathbb{N}}D_{rg}$.
			\end{itemize}
		\end{definition}
		\begin{remark}\label{rem4100}
			Let $g$ be a g-vector. Then $g\in D_{\mathbb{N}g}$ if and only if $tg$ is tame for some $t\in\mathbb{N}$. In particular, if $g$ is tame, then $g\in D_g$.
		\end{remark}
		\begin{remark}\label{rem4200}
			For a g-vector $g$, the following conditions are equivalent.
			\begin{enumerate}
				\item[$(1)$] $g$ satisfies the regularity condition.
				\item[$(2)$] $h\notin D_{\mathbb{N}h}$, for all wild $h\in\smd(g)$.
			\end{enumerate}
			In particular, $\Lambda$ satisfies the regularity condition if and only if $g\notin D_{\mathbb{N}g}$ for all wild g-vectors $g$.
		\end{remark}
		\begin{proposition}\label{prop4300}
			Let $g$ be a g-vector. Then the following statements hold.
			\begin{enumerate}
				\item[$(1)$] If $g$ is wild and contained in $D_g$, then
				$|\ind(g\oplus tg)|>|\ind(t(g\oplus tg))|$
				for some $t\in\mathbb{N}$.
				\item[$(2)$] If $|\ind(tg)|<|\ind(g)|$, for some $t\in\mathbb{N}$, then there is a wild g-vector $h\in\ind(g)$ with $h\in D_{h}$.
			\end{enumerate}
		\end{proposition}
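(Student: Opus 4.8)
The plan is to route both parts through one observation: for a generically indecomposable g-vector $h$ and an integer $m\ge 2$, membership $h\in D_h$ is equivalent to $h\in\ind(mh)$, i.e. to $h$ surviving as a generic indecomposable summand of one of its own multiples. Indeed, if $h\in\ind(mh)$ then $mh=h\oplus(m-1)h$ is a valid direct sum, which is exactly the defining condition in Definition \ref{def4000}; conversely $h\in D_h$ produces $(1+t)h=h\oplus th$, whence $h\in\ind((1+t)h)$ with $1+t\ge2$. I will also use two bookkeeping facts. First, for a valid direct sum $a\oplus b$ the generic decomposition of $a\oplus b$ is the concatenation of those of $a$ and $b$: refine each factor and invoke uniqueness (Definition-Theorem \ref{defthm3400}) together with additivity of the E-invariant (Remark \ref{rem3200}); hence $\ind(a\oplus b)=\ind(a)\cup\ind(b)$. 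Second, every generic indecomposable summand of a tame g-vector is tame (Remark \ref{rem3200}), and if $c$ is tame then $\ind(c)=\ind(kc)$ for all $k$ (Remark \ref{rem3700}).

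For part $(1)$ I first manufacture a single exponent that simultaneously witnesses $g\in D_g$ and produces a tame multiple. Since $g\in D_g$ there is $t_1$ with $g+t_1g=g\oplus t_1g$, and since $g\in D_g\subseteq D_{\mathbb{N}g}$, Remark \ref{rem4100} yields $t_2$ with $t_2g$ tame. Put $t_0:=t_1t_2$. Then $t_0g=t_1(t_2g)$ is tame (all multiples of a tame g-vector are tame, Remark \ref{rem2900}), while scaling $g+t_1g=g\oplus t_1g$ by $(1,t_2)$ via Remark \ref{rem2900} gives $g+t_0g=g\oplus t_0g$, and scaling by $(t_0,t_0)$ gives $t_0(g\oplus t_0g)=t_0g\oplus t_0^2g$. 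Using additivity of $\ind$ over $\oplus$ and the tameness of $t_0g$ (so $\ind(t_0g)=\ind(t_0^2g)$) I obtain $\ind(g\oplus t_0g)=\ind(g)\cup\ind(t_0g)$ and $\ind(t_0(g\oplus t_0g))=\ind(t_0g)$. As $g$ is wild it has a wild summand $\gamma\in\ind(g)$ (Remark \ref{rem3200}), whereas every element of $\ind(t_0g)$ is tame; thus $\gamma\in\ind(g\oplus t_0g)\setminus\ind(t_0(g\oplus t_0g))$, which forces the strict inequality $|\ind(g\oplus t_0g)|>|\ind(t_0(g\oplus t_0g))|$ with $t=t_0$.

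For part $(2)$ I argue by contraposition: assuming no wild $h\in\ind(g)$ lies in $D_h$, I aim to produce an injection $\ind(g)\hookrightarrow\ind(tg)$ for each $t$. Writing $g=\bigoplus_{\gamma\in\ind(g)}\gamma^{\oplus m_\gamma}$ gives $\ind(tg)=\bigcup_{\gamma\in\ind(g)}\ind(tm_\gamma\gamma)$. Each tame $\gamma$ is sent to itself, since $\ind(tm_\gamma\gamma)=\{\gamma\}$. For a wild $\gamma$ the dictionary gives $\gamma\notin\ind(tm_\gamma\gamma)$, so $\gamma$ must be assigned some other summand of $tm_\gamma\gamma$, and the task is to choose these assignments injectively and disjointly from the tame part. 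This is precisely a system of distinct representatives for the family $\{\ind(tm_\gamma\gamma)\}_\gamma$, whose existence is a Hall-type condition. A deficiency would mean that after scaling, a subfamily of wild summands decomposes into too few generic indecomposables, all of which are ``old'' tame summands of $g$. I expect to exclude this via the wall-and-chamber description in Remark \ref{rem3750}: the generic indecomposable summands occurring together in $\ind(tm_\gamma\gamma)$ are pairwise E-orthogonal, so if they were all tame they would span the closed cone $\mathcal{C}(\mathrm{P})$ of a single $2$-term silting complex $\mathrm{P}$; then $tm_\gamma\gamma$, and with it the ray through the wild vector $\gamma$, would lie in $\mathcal{C}(\mathrm{P})$ and hence be tame, contradicting wildness of $\gamma$. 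Thus each wild summand genuinely contributes a new generic indecomposable, the injection exists, and $|\ind(tg)|\ge|\ind(g)|$.

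The main obstacle is exactly this last step of part $(2)$: turning ``the count drops'' into the production of a concrete surviving wild summand. The dictionary cleanly reduces the statement to a matching problem, but ruling out the collapse of several wild rays onto a common family of pre-existing tame summands seems to need the silting-cone geometry rather than pure g-vector arithmetic, so pinning down the precise claim that a pairwise E-orthogonal family of tame generic indecomposables lies in one silting cone is where the real work lies.
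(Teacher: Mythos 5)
Your part (1) is correct and takes essentially the paper's route: manufacture a $t$ with $tg$ tame and $g+tg=g\oplus tg$, then compare $\ind(g)\cup\ind(tg)$ with $\ind(tg)$; your endgame (a wild $\gamma\in\ind(g)$ cannot lie in the all-tame set $\ind(tg)$) is if anything cleaner than the paper's appeal to the minimality of $t$. The genuine gap is in part (2), exactly where you flag it, and the repair you sketch rests on a false claim. A pairwise E-orthogonal family of tame generically indecomposable g-vectors need \emph{not} lie in the closed cone $\mathcal{C}(\mathrm{P})$ of any $2$-term silting complex: tameness of $c$ says that $e(a,a')=0$ for a \emph{general pair} $(a,a')\in\Hom_{\Lambda}(c)\times\Hom_{\Lambda}(c)$, whereas membership in some $\mathcal{C}(\mathrm{P})$ requires $c$ to be presilting, i.e. $e(a,a)=0$ for a \emph{single} $a$, which is strictly stronger. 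Remark \ref{rem3750} explicitly isolates the locus $\Walls\setminus\coprod\mathcal{C}(\mathrm{P})$ where such g-vectors live; the generically indecomposable isotropic g-vector of the Kronecker algebra is tame but not presilting. So the Hall condition cannot be verified this way, and your system of distinct representatives is not produced.

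The paper avoids any Hall-type argument; the distinctness you are trying to engineer is automatic once you use additivity of $e$ (Remark \ref{rem3200}) in place of cone geometry. Write $g=h_1\oplus\cdots\oplus h_r\oplus g_1\oplus\cdots\oplus g_s$ with the $h_j$ tame and the $g_i$ wild. If some $i$ satisfies $\ind(tg_i)\subseteq\ind(t(g-g_i))$, then from $e(g_i,t(g-g_i))=0=e(t(g-g_i),g_i)$ and additivity one gets $e(g_i,c)=0=e(c,g_i)$ for every $c\in\ind(t(g-g_i))$, hence $e(g_i,tg_i)=0=e(tg_i,g_i)$, i.e. $g_i\in D_{g_i}$, and you are done. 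Otherwise choose $g_i'\in\ind(tg_i)\setminus\ind(t(g-g_i))$ for each $i$; since $\ind(t(g-g_i))$ contains every $h_j$ and all of $\ind(tg_{i'})$ for $i'\neq i$, while $g_i'$ avoids it, the $g_i'$ are pairwise distinct and distinct from the $h_j$. This exhibits $|\ind(g)|$ distinct elements of $\ind(tg)$, contradicting $|\ind(tg)|<|\ind(g)|$. Your part (2) should be rewritten along these lines; as it stands it is not a proof.
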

		\begin{proof}
			 (1) Let $t=\min\{r\in\mathbb{N}\mid (1+r)g=g\oplus rg\}$. One can see that $tg$ is tame and so by Remark \ref{rem3700}, $|\ind(t(g\oplus tg))|=|\ind((1+t)tg)|=|\ind(tg)|$. But $|\ind(g\oplus tg)|\ge|\ind(tg)|$. If the equality holds, then $\ind(g)\subseteq\ind(tg)$. In this case, $g$ is a direct summand of $tg$ and it contradicts the minimality of $t$. Therefore, $|\ind(g\oplus tg)|>|\ind(t(g\oplus tg))|$.
			 
			 (2) Consider  the generic decomposition $g=h_1\oplus\cdots\oplus h_r\oplus g_1\oplus\cdots\oplus g_s$, where $h_i$ is tame and $g_i$ is wild. Suppose that for each $1\le i\le s$, there exits $g'_i\in\ind(tg_i)\setminus\ind(tg-tg_i)$. Then
			 $$h'=h_1\oplus\cdots\oplus h_r\oplus g'_1\oplus\cdots\oplus g'_s\in\smd(tg).$$
			 So $|\ind(g)|=|\ind(h')|\le|\ind(tg)|$. It contradicts the assumption of $(2)$. Hence, there exists $1\le i\le s$ such that $\ind(tg_i)\subseteq\ind(tg-tg_i)$. So $g_i\in D_{g_i}$.
		\end{proof}
		\begin{definition}\label{def4400}
			Let $g$ be a g-vector. Then $g$ satisfies the \textit{non-decreasing condition}, if $h\notin D_h$, for all wild $h\in\ind(g)$.
			We say $\Lambda$ satisfies the non-decreasing condition, if all g-vectors satisfy the non-decreasing condition.
		\end{definition}
		\begin{proposition}\label{prop4450}
			The ray condition implies the regularity condition and the regularity condition implies the non-decreasing condition. In particular, tame g-vectors satisfy all of the three conditions.
		\end{proposition}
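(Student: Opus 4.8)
The plan is to verify the two implications separately and then observe that the ``in particular'' clause is a vacuous-quantifier statement. Throughout, the decisive structural point is that all three conditions of Definitions~\ref{def3800} and \ref{def4400} are quantified only over the \emph{wild} members of $\ind(g)$, so these are exactly the summands I must control, and a $g$ with no wild summand satisfies each condition automatically.

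For the implication ``ray $\Rightarrow$ regularity'', I would fix a wild $h\in\ind(g)$ and a $t\in\mathbb{N}$, assume the ray condition, and argue by contradiction that $th$ cannot be tame. Suppose $th$ were tame. Then Remark~\ref{rem3700}, applied to the tame g-vector $th$ with generic decomposition $th=f_1\oplus\cdots\oplus f_m$ (where $m\ge 1$ since $th\neq 0$), yields that $2(th)=f_1^{\oplus 2}\oplus\cdots\oplus f_m^{\oplus 2}$ is the generic decomposition of $2(th)$. As this has at least two generically indecomposable summands counted with multiplicity, $2(th)=(2t)h$ is generically decomposable. On the other hand, the ray condition requires $(2t)h$ to be generically indecomposable, i.e.\ to have a single summand in its generic decomposition (Definition~\ref{def3300} together with Defthm~\ref{defthm3400}). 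This contradiction forces $th$ to be wild, which is precisely the regularity condition. I expect this to be the main obstacle: the naive reading of the parenthetical ``(thus wild)'' in Definition~\ref{def3800} is not an isolated fact about a single multiple, and the argument genuinely needs the clause of the ray condition asserting that \emph{every} multiple $t'h$ is generically indecomposable (here I use $t'=2t$), together with the rigidity of generic decompositions of tame g-vectors supplied by Remark~\ref{rem3700}.

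For ``regularity $\Rightarrow$ non-decreasing'', the key observation is the set-theoretic inclusion $D_h=D_{1\cdot h}\subseteq D_{\mathbb{N}h}$, which is immediate from Definition~\ref{def4000}. I would take a wild $h\in\ind(g)$ and suppose, contrary to the non-decreasing condition, that $h\in D_h$. Then $h\in D_{\mathbb{N}h}$, so by Remark~\ref{rem4100} some multiple $sh$ is tame. But the regularity condition guarantees that $sh$ is wild for every $s\in\mathbb{N}$, a contradiction. Hence $h\notin D_h$ for all wild $h\in\ind(g)$, which is the non-decreasing condition of Definition~\ref{def4400}. Equivalently, one may simply quote the reformulation of regularity in Remark~\ref{rem4200} and combine it with $D_h\subseteq D_{\mathbb{N}h}$; I anticipate no difficulty in this step.

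Finally, for the ``in particular'' clause I would invoke Remark~\ref{rem3200}: a tame g-vector $g$ with generic decomposition $g=g_1\oplus\cdots\oplus g_s$ has every $g_i$ tame, so $\ind(g)=\{g_1,\dots,g_s\}$ contains no wild g-vector. Each of the ray, regularity, and non-decreasing conditions is universally quantified over the (now empty) set of wild elements of $\ind(g)$, hence all three hold vacuously, and in particular every tame g-vector satisfies all of them. Beyond recording this vacuity I foresee no additional work here.
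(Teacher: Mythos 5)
Your proposal is correct and follows essentially the same route as the paper: the paper's proof likewise derives the first implication by noting that if $th$ were tame then $2th=th\oplus th$ would not be generically indecomposable, contradicting the ray condition, and dismisses the remaining implications (which you spell out via $D_h\subseteq D_{\mathbb{N}h}$ and the vacuity for tame $g$) as clear. No gaps.
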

		\begin{proof}
			Consider a g-vector $g$. Let $h\in\ind(g)$ be a wild g-vector. If $th$ is tame, then $2th=th\oplus th$. Hence, $2th$ is not generically indecomposable. Therefore, the ray condition implies the regularity condition. The other parts are also clear.
		\end{proof}
		\begin{question}\label{ques4500}
			Is there any finite dimensional algebra that does not satisfy the non-decreasing condition?
		\end{question}
		Proposition \ref{prop4300} and the following observation are our main reasons to call this condition ``non-decreasing''.
		\begin{theorem}\label{prop4600}
			$\Lambda$ satisfies the non-decreasing condition if and only if
			$$|\ind(g)|\le|\ind(tg)|,$$
			for all g-vectors $g$ and $t\in\mathbb{N}$. Indeed, for a g-vector $g$, the following statements are equivalent.
			\begin{enumerate}
				\item[$(1)$] $tg$ satisfies the non-decreasing condition, for all $t\in\mathbb{N}$.
				\item[$(2)$] $|\ind(h)|\le|\ind(th)|$, for all $t\in\mathbb{N}$ and wild $h\in\add(g)$.
			\end{enumerate}
		\end{theorem}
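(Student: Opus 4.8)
The plan is to prove the local equivalence of (1) and (2) for a fixed g-vector $g$, and then recover the displayed biconditional by quantifying over all g-vectors. For the reduction, I would first observe that ``$\Lambda$ satisfies the non-decreasing condition'' is literally the assertion that (1) holds for every g-vector $g$ (take $t=1$ in one direction; in the other, note each $tg$ is itself a g-vector). Likewise, the inequality $|\ind(g)|\le|\ind(tg)|$ for all $g$ and $t$ is equivalent to (2) holding for every $g$: applying the inequality to the g-vector $h$ itself gives one direction, and for the converse a wild $g_0$ lies in $\add(g_0)$ while a tame $g_0$ satisfies $|\ind(g_0)|=|\ind(tg_0)|$ by Remark \ref{rem3700}. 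Thus the global statement follows once the equivalence (1)$\iff$(2) is in hand.

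For (1)$\Rightarrow$(2) I would argue by contraposition. Suppose some wild $h\in\add(g)$ has $|\ind(th)|<|\ind(h)|$ for some $t$, and fix $r$ with $h\in\smd(rg)$. By Proposition \ref{prop4300}(2) there is a wild $h'\in\ind(h)$ with $h'\in D_{h'}$. Uniqueness of the generic decomposition (Definition-Theorem \ref{defthm3400}) gives $\ind(h)\subseteq\ind(rg)$, so $h'$ is a wild generically indecomposable summand of $rg$ with $h'\in D_{h'}$; hence $rg$ violates the non-decreasing condition, contradicting (1).

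For (2)$\Rightarrow$(1), again by contraposition, suppose $tg$ fails the non-decreasing condition for some $t$, so there is a wild $h'\in\ind(tg)$ with $h'\in D_{h'}$. Applying Proposition \ref{prop4300}(1) to $h'$ produces some $s$ with $|\ind(k)|>|\ind(sk)|$, where $k:=(1+s)h'=h'\oplus sh'$. The essential point is to exclude that $k$ is tame: if it were, Remark \ref{rem3700} would force $|\ind(k)|=|\ind(sk)|$, a contradiction, so $k$ is wild. Since $h'\in\ind(tg)\subseteq\add(g)$ and $\add(g)$ is closed under positive multiples (via Remark \ref{rem2900}), we have $k\in\add(g)$, whence (2) applies and yields $|\ind(k)|\le|\ind(sk)|$, again a contradiction. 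Therefore no such $h'$ exists and every $tg$ satisfies the non-decreasing condition.

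The step I expect to be the main obstacle is exactly this case distinction in (2)$\Rightarrow$(1): Proposition \ref{prop4300}(1) only guarantees a drop in the number of generic summands after passing from $h'$ to $k=(1+s)h'$, and a priori $k$ might be tame even though $h'$ is wild; closing that gap requires the tame count-invariance of Remark \ref{rem3700}. The remaining items—that the generic summands of a summand are again generic summands of the whole, and that $\add(g)$ is stable under positive multiples—are routine consequences of the uniqueness of generic decomposition and Remark \ref{rem2900}.
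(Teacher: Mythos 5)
Your argument is correct and follows essentially the same route as the paper, whose entire proof is the statement that the theorem is a direct consequence of Proposition \ref{prop4300}. Your write-up supplies the details the paper leaves implicit---in particular the exclusion of the tame case for $k=(1+s)h'$ via Remark \ref{rem3700}, the inclusion $\ind(h)\subseteq\ind(rg)$ from uniqueness of generic decompositions, and the stability of $\add(g)$ under positive multiples via Remark \ref{rem2900}---and these are all handled correctly.
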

		\begin{proof}
			It is a direct consequence of Proposition \ref{prop4300}.
		\end{proof}
		\begin{theorem}\label{thm4700}
			Let $g$ be a g-vector. If $g$ satisfies the non-decreasing condition, then $\ind(g)$ is linearly independent and thus $|\ind(g)|\le n$. Furthermore, the following statements are equivalent.
			\begin{enumerate}
				\item[$(1)$]  $\ind(tg)$ is linearly independent, for all $t\in\mathbb{N}$.
				\item[$(2)$] $tg$ satisfies the non-decreasing condition, for all $t\in\mathbb{N}$.
			\end{enumerate}
			In particular, $\Lambda$ satisfies the non-decreasing condition if and only if $\ind(g)$ is linearly independent, for each g-vector $g$.
		\end{theorem}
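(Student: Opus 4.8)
The plan is to route everything through one elementary dictionary. Write the generic decomposition (Definition-Theorem \ref{defthm3400}) $g=\bigoplus_i h_i^{\oplus a_i}$ with $\ind(g)=\{h_1,\dots,h_m\}$ distinct, recalling that a wild summand occurs with multiplicity one. The key observation I would record first is: for a generically indecomposable \emph{wild} $h$ one has $h\in D_h$ (Definition \ref{def4000}) if and only if $h\in\ind(ph)$ for some $p\ge 2$. Indeed, $h\in D_h$ means $(1+t)h=h\oplus th$ for some $t$, which is precisely the statement that $h$ is a generically indecomposable summand of the g-vector $(1+t)h$, i.e.\ $h\in\ind((1+t)h)$; conversely a summand $h$ of $ph$ splits off the complementary g-vector $(p-1)h$, giving $h\in D_h$.

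Granting the first assertion of the theorem, the equivalence is then formal. For $(2)\Rightarrow(1)$ I apply the first assertion to each $tg$: if $tg$ is non-decreasing for all $t$, then $\ind(tg)$ is linearly independent for all $t$. For $(1)\Rightarrow(2)$ I argue by contraposition, and this is where the dictionary pays off. Suppose some $t_0g$ is not non-decreasing, so there is a wild $h\in\ind(t_0g)$ with $h\in D_h$; pick $t_1$ with $(1+t_1)h=h\oplus t_1h$. Then $\ind((1+t_1)h)=\{h\}\cup\ind(t_1h)$, and the identity $t_1h=\sum_{d\in\ind(t_1h)}m_d\,d$ coming from the generic decomposition $t_1h=\bigoplus d^{\oplus m_d}$ is a nontrivial linear relation on this set (nontrivial because $h$ is wild, so $t_1h\ne h^{\oplus t_1}$). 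Since $t_0g=h\oplus(\text{rest})$ forces $s\,t_0g=sh\oplus s(\text{rest})$ for every $s$ (Remark \ref{rem2900}), we have $\ind(sh)\subseteq\ind(s\,t_0g)$; hence $\ind((1+t_1)t_0g)$ contains the dependent set $\ind((1+t_1)h)$ and is itself dependent, contradicting $(1)$. The ``in particular'' clause follows: $\Lambda$ is non-decreasing iff every g-vector, hence every $tg$, is non-decreasing, which by the equivalence is iff every $\ind(g)$ is linearly independent.

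It remains to prove the main assertion: if $g$ is non-decreasing then $\ind(g)$ is linearly independent (whence $|\ind(g)|\le n$, there being at most $n$ independent vectors in $\mathbb{R}^n$). I would prove the contrapositive: if $\ind(g)=\{h_1,\dots,h_m\}$ is linearly dependent, then $|\ind(tg)|<m$ for some $t$, and then invoke Proposition \ref{prop4300}(2) to produce a wild $h\in\ind(g)$ with $h\in D_h$, contradicting the non-decreasing condition. The tame core is immediate from uniqueness of generic decompositions: clearing a relation to $v=\sum_{i\in S^+}p_ih_i=\sum_{j\in S^-}q_jh_j$ with disjoint nonempty supports, if every $h_\bullet$ occurring were tame (or wild with its forced multiplicity one), both sides would be bona fide generic decompositions of the single g-vector $v$, and Definition-Theorem \ref{defthm3400} (together with Remark \ref{rem3700}) would force $S^+=S^-$, which is absurd. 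To convert a genuine dependence into a strict drop of $|\ind(tg)|$, I would use
\[\ind(tg)=\{h_i : h_i\text{ tame}\}\cup\bigcup_{h_i\text{ wild}}\ind(th_i),\]
valid because $g$ is a direct sum of the pairwise-compatible $h_i$ and direct sums scale (Remarks \ref{rem2900} and \ref{rem3200}), and argue that a linear relation among the $h_i$ forces, for suitably divisible $t$, the sets $\ind(th_i)$ attached to wild summands to overlap the remaining summands, so that the union has strictly fewer than $m$ elements.

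The main obstacle is exactly this last step: extracting a strict drop $|\ind(tg)|<|\ind(g)|$ from mere linear dependence. The difficulty is that for a wild $h_i$ the set $\ind(th_i)$ may introduce genuinely new generically indecomposable vectors, so that a priori $|\ind(tg)|$ could instead grow; one must show that a linear relation among the $h_i$ forces enough coincidences among these new vectors and the surviving tame summands to make the count drop for some $t$. I expect to control this by comparing the (bounded) dimension of the span of $\bigcup_t\ind(tg)$ against the counts and by repeatedly applying uniqueness of generic decompositions together with the dictionary $h\in D_h\Leftrightarrow h\in\ind(ph)$; once any such coincidence is produced, Proposition \ref{prop4300}(2) finishes the job. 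Everything else in Theorem \ref{thm4700} is formal once this count-drop is in hand.
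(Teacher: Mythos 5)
Your reduction of the equivalence $(1)\Leftrightarrow(2)$ to the first assertion is sound, and your $(1)\Rightarrow(2)$ argument is essentially the paper's: from a wild $h\in\ind(t_0g)$ with $h\in D_h$ you exhibit the linearly dependent set $\{h\}\cup\ind(t_1h)\subseteq\ind((1+t_1)t_0g)$. (The paper takes $t_1$ minimal so that $h\notin\ind(t_1h)$ and gets the non-trivial relation for free; you instead derive non-triviality from wildness of $h$ --- both work.) The problem is the first assertion itself, and you have correctly located your own gap: the route via a strict drop $|\ind(tg)|<|\ind(g)|$ followed by Proposition \ref{prop4300}(2) is not completed, and your worry that $\ind(th_i)$ may introduce genuinely new indecomposables is exactly why such a drop need not occur. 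As written, only the ``all summands tame'' case of a dependence relation is actually handled.

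The paper closes this without ever trying to drop the count. Given a non-trivial relation among the elements of $\ind(g)=\{h_1,\dots,h_m\}$, write it as $v:=\sum_{i\in I}a_ih_i=\sum_{j\in J}b_jh_j$ with $I\cap J=\emptyset$ and $a_i,b_j\in\mathbb{N}$. For $i\in I$ one computes $e(h_i,v)$ and $e(v,h_i)$ through the $J$-side of the relation: since $e(h_i,h_j)=0=e(h_j,h_i)$ for distinct summands of the generic decomposition of $g$, Proposition \ref{prop3100} and Remark \ref{rem3200} give $h_i+v=h_i\oplus v$, i.e.\ $v\in D_{h_i}$; and since $v$ is simultaneously a positive integral combination in which $h_i$ occurs, this is upgraded to $h_i\in D_{h_i}$. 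The non-decreasing hypothesis then forces every $h_l$ with $l\in I\cup J$ to be tame, which puts you back in your ``tame core'': uniqueness of generic decompositions yields $\{h_i\mid i\in I\}=\{h_j\mid j\in J\}$, contradicting $I\cap J=\emptyset$. So the missing idea is to extract $h_l\in D_{h_l}$ directly from the relation and let the non-decreasing condition eliminate the wild case at once, rather than funnelling wildness through a cardinality drop that you cannot establish.
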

		\begin{proof}
			$(2)\Rightarrow (1):$
			Consider the generic decomposition $g=g_1\oplus g_2\oplus \cdots\oplus g_s$. We may assume that $g_i\neq g_j$, for all $1\le i\neq j\le s$.
			Suppose that there exist non-empty subsets $I,J$ of $\{1,2, \cdots, s\}$ with an empty intersection, such that
			$$\bigoplus_{i\in I}a_i g_i=\bigoplus_{j\in J}b_j g_j$$
			for some $a_i,b_j\in\mathbb{N}$. Since the right hand side is in $D_{g_i}$ for $i\in I$, we conclude that $g_i\in D_{g_i}$. So $g_i$ is tame. Similarly, for all $l\in I\cup J$, $g_l$ is tame. Therefore, by the uniqueness of generic decompositions, we deduce that $\{g_i\mid i\in I\}=\{g_j\mid j\in J\}$. It contradicts $I\cap J=\emptyset$. Thus $\ind(g)$ is linearly independent.
			
			$(1)\Rightarrow (2):$
			Suppose that $(2)$ does not hold. We may assume that there is a wild g-vector $h\in\ind(g)$ with $h\in D_h$. Set
			$$t=\min\{t'\in\mathbb{N}\mid h+t'h=h\oplus t'h\}.$$
			Because of the minimality of $t$, one can check that $h$ is not a direct summand of $th$. Consider a generic decomposition $th=h_1\oplus h_2\oplus \cdots\oplus h_s$. Then $h+th=h\oplus h_1\oplus\cdots\oplus h_s$ is a generic decomposition. But $\ind(h+th)=\{h,h_1,\cdots,h_s\}$ is not linearly independent. Therefore, $\ind((1+t)g)$ is not linearly independent. It is a contradiction.
		\end{proof}
		\begin{problem}
			Let $g$ be a g-vector. Is it possible to remove ``t'' from the statements $(1)$ and $(2)$ in Theorem \ref{thm4700}?\footnote{This problem was posed by Toshiya Yurikusa in our email-discussion.}
		\end{problem}
	\section{Generically $\tau$-reduced components}
			In this section, we work on direct sums of generically $\tau$-reduced components and especially we study the relation between generic decompositions of g-vectors and decompositions of generically $\tau$-reduced components. At the first step, we need to look into Crawley-Boevey-Schroer’s decomposition theorem \cite[Theorem 1.1]{CBSc02}. Then, we will peruse its $\tau$-reduced version proved in \cite{CILFS14}.
		
		\begin{proposition}[{\cite[Corollary 1.4]{Ga75}}]\label{prop4760}
			Consider $\mathbf{d}\in\mathbb{N}$. Assume that for a dimension vector $d=(d_1,\cdots,d_n)$, $\mathbf{d}=\Sigma_{i=1}^nd_i$. Then $\rep_d(\Lambda)$ is a connected component of $\rep_{\mathbf{d}}(\Lambda)$. Moreover, all connected components of $\rep_{\mathbf{d}}(\Lambda)$ arise in this way.
		\end{proposition}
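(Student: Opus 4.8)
The plan is to exploit the single structural fact that an algebra homomorphism $\phi\in\rep_{\mathbf{d}}(\Lambda)=\Hom_{k\Alg}(\Lambda,\Mat_{\mathbf{d}}(k))$ sends the fixed idempotents $e_1,\dots,e_n$ to a complete set of orthogonal idempotents $\phi(e_1),\dots,\phi(e_n)$ of $\Mat_{\mathbf{d}}(k)$ with $\sum_{i=1}^n\phi(e_i)=1$. First I would attach to each such $\phi$ the vector $d(\phi):=(\operatorname{rank}\phi(e_1),\dots,\operatorname{rank}\phi(e_n))$. Because the $\phi(e_i)$ are orthogonal idempotents summing to the identity, their ranks satisfy $\sum_{i=1}^n\operatorname{rank}\phi(e_i)=\mathbf{d}$, and $d(\phi)$ is precisely the dimension vector of the $\Lambda$-module that $\phi$ defines on $k^{\mathbf{d}}$. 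This yields a partition $\rep_{\mathbf{d}}(\Lambda)=\bigsqcup_{d}\rep_{\mathbf{d}}(\Lambda)_d$, where $\rep_{\mathbf{d}}(\Lambda)_d:=\{\phi\mid d(\phi)=d\}$ and $d$ runs over the finitely many dimension vectors with $\sum_i d_i=\mathbf{d}$.

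The first substantive step is to show that this partition consists of open and closed pieces. For each $i$ the evaluation $\phi\mapsto\phi(e_i)$ is a morphism $\rep_{\mathbf{d}}(\Lambda)\to\Mat_{\mathbf{d}}(k)$ whose image consists of idempotents (since $\phi(e_i)^2=\phi(e_i^2)=\phi(e_i)$), and I claim the rank of an idempotent is a locally constant function. Indeed, the rank of a matrix is lower semi-continuous in general; but for an idempotent $e$ the complementary idempotent $1-e$ satisfies $\operatorname{rank}(1-e)=\mathbf{d}-\operatorname{rank}(e)$, and lower semi-continuity of $\operatorname{rank}(1-e)$ forces $\operatorname{rank}(e)$ to be upper semi-continuous as well, hence locally constant. (This argument is characteristic-free, unlike the shortcut identifying the rank with the trace.) Pulling back along the evaluation morphisms and intersecting over $i=1,\dots,n$ shows each $\rep_{\mathbf{d}}(\Lambda)_d$ is open and closed, so it is a union of connected components, and the nonempty pieces exhaust $\rep_{\mathbf{d}}(\Lambda)$. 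This already gives the ``moreover'' assertion: every connected component lies in a unique piece and therefore carries a well-defined dimension vector $d$ with $\sum_i d_i=\mathbf{d}$.

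Next I would identify the piece $\rep_{\mathbf{d}}(\Lambda)_d$ with $\rep_d(\Lambda)$. Fixing a block decomposition $k^{\mathbf{d}}=k^{d_1}\oplus\cdots\oplus k^{d_n}$ and the associated standard idempotents $\epsilon_1,\dots,\epsilon_n$ with $\operatorname{rank}\epsilon_i=d_i$, the homomorphisms $\phi$ with $\phi(e_i)=\epsilon_i$ for all $i$ are determined by the arrow data $\phi(\alpha)\in\Hom_k(k^{d_{s(\alpha)}},k^{d_{t(\alpha)}})$ subject to the relations in $I$; this locus is exactly $\rep_d(\Lambda)$ as defined inside $\rep_d(Q)$. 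Since any two complete sets of orthogonal idempotents of the same rank-type are $\GL_{\mathbf{d}}(k)$-conjugate --- choose a linear automorphism of $k^{\mathbf{d}}$ matching their images block by block --- conjugation yields $\rep_{\mathbf{d}}(\Lambda)_d=\GL_{\mathbf{d}}(k)\cdot\rep_d(\Lambda)$, so that $\rep_{\mathbf{d}}(\Lambda)_d\cong\GL_{\mathbf{d}}(k)\times_{\GL_d(k)}\rep_d(\Lambda)$ is a fibre bundle over $\GL_{\mathbf{d}}(k)/\GL_d(k)$ with fibre $\rep_d(\Lambda)$, where $\GL_d(k)=\prod_i\GL_{d_i}(k)$ is the stabilizer of the standard frame.

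The hard part is the passage from ``open and closed piece'' to ``connected component'', that is, showing each $\rep_{\mathbf{d}}(\Lambda)_d$ is connected. Because both $\GL_{\mathbf{d}}(k)$ and $\GL_d(k)$ are connected, the bundle description above gives a bijection between the connected components of $\rep_{\mathbf{d}}(\Lambda)_d$ and those of its fibre $\rep_d(\Lambda)$; thus the piece is a single component exactly when the module variety $\rep_d(\Lambda)$ is connected. This is the one genuinely nontrivial input. When $I$ admits homogeneous generators with respect to the path-length grading it is elementary: for a homogeneous relation $r$ of degree $\ell$ one has $r(t\cdot M)=t^{\ell}r(M)$, so scaling the arrow maps keeps $M$ inside $\rep_d(\Lambda)$ and contracts every point to the semisimple representation $S_{(1)}^{d_1}\oplus\cdots\oplus S_{(n)}^{d_n}$, exhibiting $\rep_d(\Lambda)$ as a connected cone. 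In general I would record the connectedness of $\rep_d(\Lambda)$ as the external ingredient supplied by \cite[Corollary 1.4]{Ga75}, and then combine it with the elementary open-closed decomposition and the frame-bundle identification above to conclude.
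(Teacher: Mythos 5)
The paper does not actually prove this proposition: it is stated purely as a citation of Gabriel, so the only comparison available is with the standard argument that your proposal reconstructs. Your reduction is correct and is essentially that standard proof. The local constancy of $\operatorname{rank}\phi(e_i)$ on the idempotent locus (obtained by applying lower semicontinuity of rank to both $\phi(e_i)$ and $\phi(1-e_i)$, which is indeed the characteristic-free way to do it) yields the decomposition of $\rep_{\mathbf{d}}(\Lambda)$ into open and closed pieces indexed by dimension vectors, and identifying each piece with the $\GL_{\mathbf{d}}(k)$-saturation of the standardly embedded $\rep_d(\Lambda)$ reduces everything to the connectedness of $\rep_d(\Lambda)$, since a continuous image of the connected set $\GL_{\mathbf{d}}(k)\times\rep_d(\Lambda)$ is connected. (The literal identification of the component with $\rep_d(\Lambda)$ rather than with its saturation is exactly the fibre-bundle point you make, and is what the paper's subsequent convention of ``working with dimension vectors'' implicitly uses.)

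The one genuine gap is the step you outsource at the end: your scaling $M\mapsto t\cdot M$ only proves connectedness of $\rep_d(\Lambda)$ when $I$ has homogeneous generators, and for general admissible $I$ you fall back on citing Gabriel for precisely the statement being proved. This can be closed directly. Any $M\in\rep_d(\Lambda)$ is a finite-dimensional $\Lambda$-module, hence admits a composition series whose associated graded module is $S_{(1)}^{d_1}\oplus\cdots\oplus S_{(n)}^{d_n}$, i.e.\ the origin of $\rep_d(Q)$ (all arrow maps zero; this point satisfies the relations because $I$ is contained in the square of the arrow ideal, so every piece is also nonempty). Choosing a basis adapted to the filtration and conjugating by a suitable diagonal one-parameter subgroup $\mathfrak{g}_t$ that rescales the successive layers gives a morphism $\mathbb{G}_m\to\rep_d(\Lambda)$, $t\mapsto\mathfrak{g}_t\cdot M$, whose values are all isomorphic to $M$ --- so the relations hold automatically, with no homogeneity hypothesis --- and which extends over $t=0$ with limit the associated graded module; the limit stays in $\rep_d(\Lambda)$ because that set is closed and $\GL_d(k)$-stable. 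The image of $\mathbb{A}^1$ is then a connected subset joining $M$ to the origin, so $\rep_d(\Lambda)$ is connected and your argument becomes self-contained.
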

		Therefore, rather than varieties of representations of the same dimension, we work on varieties of representations with the same dimension vectors.
		\begin{namedthm}{Decomposition of irreducible components}\label{defthm4800}
			Consider dimension vectors $d_1, \cdots, d_s$.
			For each $1\le i\le s$, let $\mathcal{Z}_i\subseteq\rep_{d_i}(\Lambda)$ be a $\GL_{d_i}(k)$-stable subset. Consider the map
			$$\GL_d(k)\times\mathcal{Z}_1\times\cdots\times\mathcal{Z}_s \rightarrow\rep_d(\Lambda),$$
			$$(\mathfrak{g},M_1,\cdots,M_s)\mapsto\mathfrak{g}\cdot(M_1\oplus M_2\oplus\cdots\oplus M_s)$$
			where $d=d_1+d_2+\cdots+d_s$.
			The image of the map is denoted by
			\[\mathcal{Z}_1\oplus\mathcal{Z}_2\oplus \cdots\oplus\mathcal{Z}_s.\]
			If $\mathcal{Z}_i$ is an irreducible locally closed subset, for each $1\le i\le s$, then the Zariski-closure $\overline{\mathcal{Z}_1\oplus\mathcal{Z}_2\oplus \cdots\oplus\mathcal{Z}_s}$ is an irreducible closed subset of $\rep_d(\Lambda)$.
			Moreover, if $\mathcal{Z}_i$ is an irreducible component, for each $1\le i\le s$, then the following statements are equivalent.
			\begin{enumerate}
				\item[$(1)$] $\overline{\mathcal{Z}_1\oplus\mathcal{Z}_2\oplus \cdots\oplus\mathcal{Z}_s}$ is an irreducible component of $\rep_d(\Lambda)$
				\item[$(2)$] $\min\ext_\Lambda^1(\mathcal{Z}_i,\mathcal{Z}_j)=0,$
				for all $1\le i\neq j\le s$
			\end{enumerate}
			An irreducible component $\mathcal{Z}$ is said to be \textit{generically indecomposable}, if a general module in $\mathcal{Z}$ is indecomposable.
			Each irreducible component $\mathcal{Z}$ can be written as a direct sum of generically indecomposable irreducible components $\mathcal{Z}=\overline{\mathcal{Z}_1\oplus\mathcal{Z}_2\oplus \cdots\oplus\mathcal{Z}_s}$, in a unique way. In this case, we set $|\mathcal{Z}|=|\{\mathcal{Z}_1,\cdots,\mathcal{Z}_s\}|$.
		\end{namedthm}
		\begin{lemma}\label{lem4900}
			Consider dimension vectors $d_1, \cdots, d_s$ and an irreducible component $\mathcal{Z}_i$ of $\rep_{d_i}(\Lambda)$, for each $1\le i\le s$. Assume that $d=d_1+d_2+\cdots+d_s$ and
			\[\mathcal{Z}=\overline{\mathcal{Z}_1\oplus\mathcal{Z}_2\oplus \cdots\oplus\mathcal{Z}_s}\]
			is an irreducible component of $\rep_{d}(\Lambda)$. Let $\tilde{\mathcal{U}_i}$ be a non-empty open (dense) subset of $\mathcal{Z}_i$, for each $1\le i\le s$. Then there exists a non-empty open (dense) subset $\tilde{\mathcal{U}}$ of $\mathcal{Z}$ such that $\tilde{\mathcal{U}}\subseteq\tilde{\mathcal{U}}_1\oplus\tilde{\mathcal{U}_2}\oplus\cdots\oplus\tilde{\mathcal{U}_s}$.Therefore, a general module in $\mathcal{Z}$ can be written as a direct sum of general modules in $\mathcal{Z}_i$ for $1\le i\le s$.
		\end{lemma}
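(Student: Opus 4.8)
The plan is to realise $\mathcal{Z}$ and the prescribed open subsets as images under the "conjugate a direct sum" morphism from Definition-Theorem \ref{defthm4800}, and then to transfer the open-density of the $\tilde{\mathcal{U}}_i$ across this morphism by way of Chevalley's theorem. Write $W=\GL_d(k)\times\mathcal{Z}_1\times\cdots\times\mathcal{Z}_s$ and let
$$\Phi\colon W\to\rep_d(\Lambda),\qquad (\mathfrak{g},M_1,\dots,M_s)\mapsto\mathfrak{g}\cdot(M_1\oplus\cdots\oplus M_s)$$
be the morphism of Definition-Theorem \ref{defthm4800}, so that $\Phi(W)=\mathcal{Z}_1\oplus\cdots\oplus\mathcal{Z}_s$ and, by hypothesis, $\overline{\Phi(W)}=\mathcal{Z}$. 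First I would record that $W$ is irreducible: each $\GL_{d_i}(k)$ is a non-empty open subvariety of the affine space $\Mat_{d_i}(k)$ and hence irreducible, each $\mathcal{Z}_i$ is irreducible (being an irreducible component), and a finite product of irreducible varieties over the algebraically closed field $k$ is again irreducible. Consequently $\Phi\colon W\to\mathcal{Z}$ is a dominant morphism of irreducible varieties.

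Next, set $W'=\GL_d(k)\times\tilde{\mathcal{U}}_1\times\cdots\times\tilde{\mathcal{U}}_s$. Since each $\mathcal{Z}_i$ is irreducible, the non-empty open subset $\tilde{\mathcal{U}}_i$ is dense in $\mathcal{Z}_i$ (Definition \ref{def600}), so $W'$ is a non-empty open, hence dense, subset of $W$; in particular $W'$ is itself an irreducible variety. The key point I would then verify is that the restriction $\Phi|_{W'}\colon W'\to\mathcal{Z}$ remains dominant. This follows from continuity: since $\overline{W'}=W$, we get $\mathcal{Z}=\overline{\Phi(W)}=\overline{\Phi(\overline{W'})}\subseteq\overline{\Phi(W')}\subseteq\mathcal{Z}$, whence $\overline{\Phi(W')}=\mathcal{Z}$. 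Note that, by the definition of the $\oplus$-operation in Definition-Theorem \ref{defthm4800}, one has $\Phi(W')=\tilde{\mathcal{U}}_1\oplus\cdots\oplus\tilde{\mathcal{U}}_s$.

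Finally, I would apply Chevalley's Constructible Theorem \ref{thm1200} to $\Phi|_{W'}$: its image $\tilde{\mathcal{U}}_1\oplus\cdots\oplus\tilde{\mathcal{U}}_s$ is a constructible subset of $\mathcal{Z}$, and it is dense in the irreducible variety $\mathcal{Z}$ by the previous paragraph. Lemma \ref{lem1100} then yields a non-empty open subset $\tilde{\mathcal{U}}\subseteq\mathcal{Z}$ with
$$\tilde{\mathcal{U}}\subseteq\tilde{\mathcal{U}}_1\oplus\cdots\oplus\tilde{\mathcal{U}}_s,$$
which is the desired open (dense) subset. The concluding assertion is then immediate: any $M\in\tilde{\mathcal{U}}$ lies in $\tilde{\mathcal{U}}_1\oplus\cdots\oplus\tilde{\mathcal{U}}_s$, so $M\cong M_1\oplus\cdots\oplus M_s$ for some $M_i\in\tilde{\mathcal{U}}_i$ (conjugation by $\GL_d(k)$ being an isomorphism of representations), exhibiting a general module of $\mathcal{Z}$ as a direct sum of general modules of the $\mathcal{Z}_i$.

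I expect no serious obstacle here, as the argument is a formal consequence of the irreducibility of $W$ and the behaviour of dominant morphisms under Chevalley's theorem. The only step requiring genuine care is the verification that passing from $W$ to the open subset $W'$ preserves dominance, equivalently that the \emph{smaller} constructible image $\tilde{\mathcal{U}}_1\oplus\cdots\oplus\tilde{\mathcal{U}}_s$ is still dense in $\mathcal{Z}$; this is precisely what allows Lemma \ref{lem1100} to be invoked for the smaller set rather than merely for $\mathcal{Z}_1\oplus\cdots\oplus\mathcal{Z}_s$.
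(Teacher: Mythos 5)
Your argument is correct and essentially identical to the paper's own proof: both realise $\tilde{\mathcal{U}}_1\oplus\cdots\oplus\tilde{\mathcal{U}}_s$ as the image of the dense open subset $\GL_d(k)\times\tilde{\mathcal{U}}_1\times\cdots\times\tilde{\mathcal{U}}_s$, observe that this image is constructible and dense in the irreducible variety $\mathcal{Z}$, and extract the open subset $\tilde{\mathcal{U}}$ via Chevalley's theorem and Lemma \ref{lem1100}. The only cosmetic difference is that you justify the density of the restricted image by a direct continuity computation, where the paper cites Remark \ref{rem920} on compositions of dominant morphisms.
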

		\begin{proof}
			As $\GL_d(k)\times\tilde{\mathcal{U}_1}\times \cdots\times\tilde{\mathcal{U}_s}$ is open and dense in $\GL_d(k)\times\mathcal{Z}_1\times\cdots\times\mathcal{Z}_s$, by Remark \ref{rem920}, the closure of its image is $\mathcal{Z}$.
			Since $\tilde{\mathcal{U}_1}\oplus\cdots\oplus\tilde{\mathcal{U}_s}$ is constructible and its closure is irreducible, by Theorem \ref{thm1200}, there exists a non-empty open (dense) subset $\tilde{\mathcal{U}}$ of $\mathcal{Z}$ such that $\tilde{\mathcal{U}}\subseteq\tilde{\mathcal{U}_1}\oplus\cdots\oplus\tilde{\mathcal{U}_s}$
		\end{proof}
		\begin{lemma}[{\cite[Corollary 2.2]{Pla13}}]\label{lem5000}
			Consider a dimension vector $d$ and an irreducible component $\mathcal{Z}\subseteq\rep_d(\Lambda)$. There exist a non-empty open (dense) subset $\tilde{\mathcal{U}}$ of $\mathcal{Z}$ and $P^{-1},P^0\in\proj\Lambda$ such that every $M\in\tilde{\mathcal{U}}$ has a minimal projective presentation of the form
			$$P^{-1}\rightarrow P^0 \rightarrow M \rightarrow 0.$$
		\end{lemma}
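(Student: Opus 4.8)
The plan is to reduce the statement to the upper semi-continuity of two families of numerical invariants attached to $M$, namely the multiplicities of the indecomposable projectives occurring in the two terms of the minimal projective presentation. Concretely, for $M\in\mathcal{Z}$ write its minimal projective presentation as $P_M^{-1}\rightarrow P_M^0\rightarrow M\rightarrow 0$. I claim that the multiplicity of $P_{(i)}$ in $P_M^0$ equals $\hom_{\Lambda}(M,S_{(i)})$ and the multiplicity of $P_{(i)}$ in $P_M^{-1}$ equals $\ext^1_{\Lambda}(M,S_{(i)})$, for each $1\le i\le n$. Granting this, it suffices to observe that both invariants take their minimal values on dense open subsets of the irreducible variety $\mathcal{Z}$, and then to intersect finitely many such subsets so as to force the isomorphism types of both $P_M^0$ and $P_M^{-1}$ to be constant along a dense open locus.

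For the first multiplicity, recall that $P_M^0$ is the projective cover of $M$, so $P_M^0=\bigoplus_i P_{(i)}^{m_i(M)}$, where $m_i(M)$ is the multiplicity of $S_{(i)}$ in the top $M/\operatorname{rad}M$. Since every map $M\rightarrow S_{(i)}$ factors through the top and $\End_{\Lambda}(S_{(i)})=k$ (as $k$ is algebraically closed), one has $m_i(M)=\hom_{\Lambda}(M,S_{(i)})$. For the second multiplicity, set $\Omega M=\ker(P_M^0\rightarrow M)$, so that $P_M^{-1}$ is the projective cover of $\Omega M$ and the multiplicity $n_i(M)$ of $P_{(i)}$ in $P_M^{-1}$ equals $\hom_{\Lambda}(\Omega M,S_{(i)})$. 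Applying $\Hom_{\Lambda}(-,S_{(i)})$ to the short exact sequence $0\rightarrow\Omega M\rightarrow P_M^0\rightarrow M\rightarrow 0$ and using $\Ext^1_{\Lambda}(P_M^0,S_{(i)})=0$ together with $\hom_{\Lambda}(P_M^0,S_{(i)})=\hom_{\Lambda}(M,S_{(i)})$ (the tops of $M$ and $P_M^0$ coincide), the resulting four-term exact sequence yields $\hom_{\Lambda}(\Omega M,S_{(i)})=\ext^1_{\Lambda}(M,S_{(i)})$ by an alternating-sum-of-dimensions count. This is consistent with Remark \ref{rem2400}, since $g^M_i=\hom_{\Lambda}(M,S_{(i)})-\ext^1_{\Lambda}(M,S_{(i)})=m_i(M)-n_i(M)$.

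With the two formulas in hand, I invoke Lemma \ref{lem300}: fixing the second argument to be the simple $S_{(i)}$, both $M\mapsto\hom_{\Lambda}(M,S_{(i)})$ and $M\mapsto\ext^1_{\Lambda}(M,S_{(i)})$ are upper semi-continuous on $\mathcal{Z}$. By Remark \ref{rem400} (or directly from Definition \ref{def200} for integer-valued functions), each attains its minimum on a non-empty open subset of $\mathcal{Z}$, and since $\mathcal{Z}$ is irreducible, every non-empty open subset is dense (Definition \ref{def600}). Intersecting the $2n$ dense open subsets on which $\hom_{\Lambda}(-,S_{(i)})$ and $\ext^1_{\Lambda}(-,S_{(i)})$ are minimal produces a non-empty dense open subset $\tilde{\mathcal{U}}\subseteq\mathcal{Z}$ on which all the $m_i$ and $n_i$ are constant. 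Putting $P^0=\bigoplus_i P_{(i)}^{m_i}$ and $P^{-1}=\bigoplus_i P_{(i)}^{n_i}$ with these constant values then gives a uniform minimal presentation $P^{-1}\rightarrow P^0\rightarrow M\rightarrow 0$ for every $M\in\tilde{\mathcal{U}}$.

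The main point requiring care is the identification of the presentation terms with semi-continuous data, rather than the semi-continuity itself (which is exactly Lemma \ref{lem300}). In particular, one must resist conflating $P^0,P^{-1}$ with the reduced pair $P^{g+},P^{g-}$ of Remark \ref{rem2500}: the terms of a minimal presentation may share common indecomposable projective summands, so the governing invariants are $\hom_{\Lambda}(M,S_{(i)})$ and $\ext^1_{\Lambda}(M,S_{(i)})$ separately, not merely their difference $g^M_i$. Once this distinction is respected, the argument is a routine application of upper semi-continuity on an irreducible variety.
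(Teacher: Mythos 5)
The paper does not prove this lemma at all; it simply cites \cite[Corollary 2.2]{Pla13}, so there is no in-paper argument to compare against. Your proof is correct and is the standard way to establish the statement: the identifications $m_i(M)=\hom_{\Lambda}(M,S_{(i)})$ and $n_i(M)=\ext^1_{\Lambda}(M,S_{(i)})$ for the multiplicities in the minimal presentation are right (the long exact sequence computation for $\Omega M$ checks out, using $\hom_{\Lambda}(P^0_M,S_{(i)})=\hom_{\Lambda}(M,S_{(i)})$), and combining them with the upper semi-continuity from Lemma \ref{lem300} and the irreducibility of $\mathcal{Z}$ to intersect the $2n$ dense open loci of minimality is exactly the intended mechanism. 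Your closing caveat is also well taken: it is the separate invariants $\hom_{\Lambda}(M,S_{(i)})$ and $\ext^1_{\Lambda}(M,S_{(i)})$ that must be controlled, not just their difference $g^M_i$, since $P^0$ and $P^{-1}$ of a minimal presentation may share summands. In effect you have supplied a self-contained proof of a fact the paper outsources to \cite{Pla13}.
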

		By the above lemma, one can assign a g-vector to each irreducible component as follows.
		\begin{definition}\label{def5100}
			Let $\mathcal{Z}$ be an irreducible component of $\rep_d(\Lambda)$ for a dimension vector $d$. Then the g-vector of a general element of $\mathcal{Z}$ is said to be the g-vector of $\mathcal{Z}$ and denoted by $g^{\mathcal{Z}}$.
		\end{definition}
		\begin{lemma}[{\cite[Lemma 5.12]{CILFS14}}]\label{lem5200}
			Consider irreducible components $\mathcal{Z}$, $\mathcal{Z}_1,\cdots, \mathcal{Z}_s$.
			If $\mathcal{Z}=\overline{\mathcal{Z}_1\oplus\mathcal{Z}_2\oplus \cdots\oplus\mathcal{Z}_s}$, then
			$g^{\mathcal{Z}}=g^{\mathcal{Z}_1}+\cdots+g^{\mathcal{Z}_s}$.
		\end{lemma}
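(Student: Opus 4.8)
The plan is to reduce the statement to the additivity of the g-vector on direct sums and then use Lemma \ref{lem4900} to realize a generic module of $\mathcal{Z}$ as such a direct sum. First I would recall from Remark \ref{rem2400} that for any module $M$ the $i$-th coordinate of $g^M$ equals $\hom_{\Lambda}(M,S_{(i)})-\ext^1_{\Lambda}(M,S_{(i)})$. Since both $\Hom_{\Lambda}(-,S_{(i)})$ and $\Ext^1_{\Lambda}(-,S_{(i)})$ carry a direct sum of modules to the direct sum of the corresponding spaces, each coordinate — and hence the whole g-vector — is additive on direct sums:
$$g^{M_1\oplus\cdots\oplus M_s}=g^{M_1}+\cdots+g^{M_s}.$$
(Equivalently, one may note that a direct sum of minimal projective presentations is again a minimal projective presentation, which yields the same additivity directly from Definition \ref{def2300}.)

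Next I would invoke Lemma \ref{lem4900}. For each $i$, choose a dense open subset $\tilde{\mathcal{U}}_i\subseteq\mathcal{Z}_i$ on which the g-vector takes its generic value $g^{\mathcal{Z}_i}$; such a locus exists because, by Remark \ref{rem2400}, each coordinate of the g-vector is (up to sign) upper semi-continuous and is therefore constant on a dense open subset, and a finite intersection of dense opens is again dense open. Lemma \ref{lem4900} then supplies a dense open subset $\tilde{\mathcal{U}}\subseteq\mathcal{Z}$ with $\tilde{\mathcal{U}}\subseteq\tilde{\mathcal{U}}_1\oplus\cdots\oplus\tilde{\mathcal{U}}_s$, so every module in $\tilde{\mathcal{U}}$ decomposes as $M=M_1\oplus\cdots\oplus M_s$ with $M_i\in\tilde{\mathcal{U}}_i$, whence $g^{M_i}=g^{\mathcal{Z}_i}$ for each $i$.

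Finally, shrinking $\tilde{\mathcal{U}}$ a little further so that $g^M=g^{\mathcal{Z}}$ holds throughout it — again possible by upper semi-continuity together with Definition \ref{def5100} — I would pick any $M$ in the resulting (still dense open) set. Combining the displays above gives
$$g^{\mathcal{Z}}=g^M=g^{M_1}+\cdots+g^{M_s}=g^{\mathcal{Z}_1}+\cdots+g^{\mathcal{Z}_s},$$
which is the assertion. The only delicate point is the bookkeeping with generic loci: one must guarantee a \emph{single} module that is simultaneously generic in $\mathcal{Z}$ and decomposes into pieces generic in each $\mathcal{Z}_i$. This is precisely the compatibility guaranteed by Lemma \ref{lem4900}, so I expect no essential difficulty beyond intersecting finitely many dense open subsets.
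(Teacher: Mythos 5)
Your argument is correct and follows essentially the same route as the paper: realize a general module of $\mathcal{Z}$ as a direct sum of general modules of the $\mathcal{Z}_i$ via Lemma \ref{lem4900} and then use additivity of $g$-vectors on direct sums. The only cosmetic difference is that you justify the generic constancy of the $g$-vector by the semi-continuity of Remark \ref{rem2400}, whereas the paper invokes Lemma \ref{lem5000}; both are valid.
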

		\begin{proof}
			By Lemma \ref{lem5000}, for each $1\le i\le s$, there is a non-empty open (dense) subset $\tilde{\mathcal{U}_i}$ of $\mathcal{Z}_i$ such that for $M_i\in\tilde{\mathcal{U}_i}$, $g^{M_i}=g^{\mathcal{Z}_i}$. So by Lemma \ref{lem4900} and Lemma \ref{lem5000}, there exists a non-empty open (dense) subset $\tilde{\mathcal{U}}$ of $\mathcal{Z}$ such that $\tilde{\mathcal{U}}\subseteq\tilde{\mathcal{U}}_1\oplus\tilde{\mathcal{U}_2}\oplus\cdots\oplus\tilde{\mathcal{U}_s}$ and for each $M\in\tilde{\mathcal{U}}$, $g^{M}=g^{\mathcal{Z}}$. Therefore, $g^{\mathcal{Z}}=g^{\mathcal{Z}_1}+\cdots+g^{\mathcal{Z}_s}$.
		\end{proof}
			By the above assumption, the decomposition of a given irreducible component does not necessarily provide the generic decomposition of its g-vector. Later, we will see that
			$$g^{\mathcal{Z}}=g^{\mathcal{Z}_1}\oplus\cdots\oplus g^{\mathcal{Z}_s},$$
			if and only if $\mathcal{Z}$ is a generically $\tau$-reduced component.
		\begin{remark}\label{rem5300}
			Let $d$ be a dimension vector and $\mathcal{Z}$ be an irreducible component of $\rep_d(\Lambda)$. It is well known that $\mathcal{Z}$ is $\GL_d(k)$-stable. Additionally, by the Voigt’s Isomorphism and the Auslander-Reiten Duality, for each $M\in\mathcal{Z}$, we have
			$$\codim_{\mathcal{Z}}\mathcal{O}_M\le\ext_\Lambda^1(M,M)\le\hom_\Lambda(M,\tau M).$$
		\end{remark}
		\begin{definition}\label{def5400}
			Consider a dimension vector $d$. An irreducible component $\mathcal{Z}$ of $\rep_d(\Lambda)$ is said to be \textit{generically $\tau$-reduced}, if for a general $M\in\mathcal{Z}$,
			$$\hom_{\Lambda}(M,\tau M)=\codim_{\mathcal{Z}}\mathcal{O}_M.$$
		\end{definition}
		\begin{theorem}[{\cite[Theorem 1.2]{Pla13}}]\label{thm5500}
			Let $g$ be a g-vector. There is a non-empty open (dense) subset $\mathcal{U}$ of $\Hom_{\Lambda}(g)$ and a dimension vector $d(g)$ satisfying the following conditions.
			\begin{enumerate}
				\item[$(1)$] For any $a\in\mathcal{U}$, $[\Coker(a)]=d(g)$, and the map
				$$\Coker:\mathcal{U}\rightarrow\rep_{d(g)}(\Lambda)$$
				is a morphism of varieties.
				\item[$(2)$] $\mathcal{Z}_g:=\overline{\bigcup_{a\in\mathcal{U}}\mathcal{O}_{\Coker(a)}}$ is a generically $\tau$-reduced component of $\rep_{d(g)}(\Lambda)$.
			\end{enumerate}
			Moreover, we have a surjective map
			$$\mathcal{Z}_{-}:K_0(\proj\Lambda)\rightarrow\{\text{generically $\tau$-reduced components of $\rep(\Lambda)$}\}$$
			such that $\mathcal{Z}_g=\mathcal{Z}_h$ if and only if their generic decompositions are the same, when one forgets negative direct summands.
		\end{theorem}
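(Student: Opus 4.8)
Since this statement is \cite[Theorem 1.2]{Pla13}, I only indicate how I would reconstruct the argument. Write $P^{g-},P^{g+}$ as in Remark \ref{rem2500}, so that $\Hom_\Lambda(g)=\Hom_\Lambda(P^{g-},P^{g+})$ is a vector space, hence an irreducible affine variety (Remark \ref{rem1500}). To produce $\mathcal{U}$ and $d(g)$, note that for $a\in\Hom_\Lambda(g)$ the sequence $P^{g-}\xrightarrow{a}P^{g+}\to\Coker(a)\to 0$ is exact, and applying the exact idempotent functor $e_i(-)$ gives $\dim_k e_i\Coker(a)=\dim_k e_iP^{g+}-\operatorname{rank}(a_i)$, where $a_i$ is the induced map on the $i$-th graded pieces. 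Each $\operatorname{rank}(a_i)$ is lower semi-continuous, so on the dense open locus $\mathcal{U}$ where all these ranks are maximal the dimension vector $[\Coker(a)]$ is constant; call it $d(g)$. On $\mathcal{U}$ the subspaces $\operatorname{Im}(a_i)\subseteq e_iP^{g+}$ have constant dimension, so $a\mapsto(\operatorname{Im}(a_i))_i$ is a morphism into a product of Grassmannians; pulling back the universal quotient together with the $\Lambda$-action and fixing compatible local splittings yields the morphism $\Coker:\mathcal{U}\to\rep_{d(g)}(\Lambda)$ of $(1)$.

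For $(2)$, irreducibility is formal: $\Coker(\mathcal{U})$ is irreducible (Remark \ref{rem700}), hence so is its saturation $\bigcup_{a\in\mathcal{U}}\mathcal{O}_{\Coker(a)}$, the image of $\GL_{d(g)}(k)\times\mathcal{U}$, and therefore $\mathcal{Z}_g$ is irreducible and closed. The decisive input is the identity, for general $a\in\mathcal{U}$ with $M=\Coker(a)$,
$$\codim_{\mathcal{Z}_g}\mathcal{O}_M=\hom_\Lambda(M,\tau M).$$
Granting it, both remaining assertions follow cleanly: by Remark \ref{rem900} fix any component $\mathcal{C}\supseteq\mathcal{Z}_g$; by Remark \ref{rem5300} this very $M$ satisfies $\codim_{\mathcal{C}}\mathcal{O}_M\le\hom_\Lambda(M,\tau M)$, so the chain $\hom_\Lambda(M,\tau M)=\codim_{\mathcal{Z}_g}\mathcal{O}_M\le\codim_{\mathcal{C}}\mathcal{O}_M\le\hom_\Lambda(M,\tau M)$ forces $\dim\mathcal{Z}_g=\dim\mathcal{C}$, whence $\mathcal{Z}_g=\mathcal{C}$ by Proposition \ref{prop1400}. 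Thus $\mathcal{Z}_g$ is an irreducible component, and the same equality is exactly the generically $\tau$-reduced condition of Definition \ref{def5400}.

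The hard part is the displayed identity, and this is where I expect the real obstacle. My plan is to factor it as $\codim_{\mathcal{Z}_g}\mathcal{O}_M=e(a,a)=\hom_\Lambda(M,\tau M)$. The right-hand equality is the comparison of the $E$-invariant of a presentation with the Auslander--Reiten datum of its cokernel (the Derksen--Fei/AIR formula underlying Definition \ref{def2600}). The left-hand equality is a dimension count for the dominant map $(\mathfrak{g},a)\mapsto\mathfrak{g}\cdot\Coker(a)$ on $\GL_{d(g)}(k)\times\mathcal{U}$: its generic fibre dimension is governed by $\Aut_\Lambda(M)$ together with the automorphisms of the presentation, and balancing $\dim\GL_{d(g)}(k)+\dim\Hom_\Lambda(g)$ against this fibre dimension is meant to produce $e(a,a)$. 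Carrying out this bookkeeping carefully, as in \cite{DeFe15,Pla13}, is the crux of the whole theorem.

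Finally, for the moreover clause I would argue as follows. Surjectivity: given a generically $\tau$-reduced $\mathcal{Z}'$, take a general $N\in\mathcal{Z}'$, let $g'=g^{\mathcal{Z}'}$ be its g-vector (Lemma \ref{lem5000}, Definition \ref{def5100}), and check $\mathcal{Z}_{g'}=\mathcal{Z}'$; here the $\tau$-reduced hypothesis is exactly what makes the generic cokernels over $\Hom_\Lambda(g')$ dense in $\mathcal{Z}'$. For the fibre description, apply the generic decomposition $g=g_1\oplus\cdots\oplus g_s$ of Definition-Theorem \ref{defthm3400}: a general $a$ splits as $\bigoplus a_i$ (Remark \ref{rem3500}), so $\Coker(a)=\bigoplus\Coker(a_i)$ and $\mathcal{Z}_g=\overline{\mathcal{Z}_{g_1}\oplus\cdots\oplus\mathcal{Z}_{g_s}}$. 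A negative summand $g_i$ has $P^{g_i+}=0$, hence $\Coker(a_i)=0$ and $\mathcal{Z}_{g_i}$ is trivial, so $\mathcal{Z}_g$ depends only on the non-negative part of the generic decomposition; this yields precisely the stated criterion $\mathcal{Z}_g=\mathcal{Z}_h$.
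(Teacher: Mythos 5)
The paper does not prove this statement at all: Theorem \ref{thm5500} is imported verbatim as \cite[Theorem 1.2]{Pla13}, so there is no internal argument to compare against, and your reconstruction has to be measured against Plamondon's proof. Your outline does track that proof: the generic dimension vector $d(g)$ via maximality of the ranks of the graded pieces $a_i$ (this is exactly the content of Remark \ref{rem5810}), the reduction of ``$\mathcal{Z}_g$ is a generically $\tau$-reduced component'' to the single identity $\codim_{\mathcal{Z}_g}\mathcal{O}_M=\hom_\Lambda(M,\tau M)$ sandwiched against Remark \ref{rem5300}, and the use of the generic decomposition for the fibre description. The sandwich argument you give for deducing both conclusions from that identity is correct and is the right way to organize the proof.

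The genuine gap is that the identity itself is only announced, not proved. The factorization $\codim_{\mathcal{Z}_g}\mathcal{O}_M=e(a,a)=\hom_\Lambda(M,\tau M)$ is the entire mathematical content of the theorem, and your description of the left-hand equality (``balancing $\dim\GL_{d(g)}(k)+\dim\Hom_\Lambda(g)$ against the fibre dimension is meant to produce $e(a,a)$'') is a plan, not an argument: the fibre of $(\mathfrak{g},a)\mapsto\mathfrak{g}\cdot\Coker(a)$ over $M$ has to be identified concretely with (an extension of) the automorphisms of the presentation, and the bookkeeping that turns $\dim\End_{K^b}(a)-\dim\End_\Lambda(M)$ into $\hom_{K^b}(a,a[1])$ is where minimality versus genericity of $a$ actually matters. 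Note also that Lemma \ref{lem6000} gives $e(a,a)=\hom_\Lambda(M,\tau M)$ only for \emph{minimal} $a$, whereas a general $a\in\Hom_\Lambda(g)$ is minimal only after stripping the negative summands (Lemma \ref{lem5700}), so the case of arbitrary $g$ needs an explicit reduction. Two smaller gaps in the ``moreover'' clause: for surjectivity, knowing that a general $M\in\mathcal{Z}'$ has a minimal presentation in $\Hom_\Lambda(g^{\mathcal{Z}'})$ does not by itself show that a \emph{general} element of $\Hom_\Lambda(g^{\mathcal{Z}'})$ has cokernel in $\mathcal{Z}'$ (the relevant locus of presentations must be shown to be dense, or one must again invoke the dimension count); and the ``only if'' direction of the fibre description requires injectivity of $g\mapsto\mathcal{Z}_g$ on g-vectors without negative summands, which is the content of Lemma \ref{lem5800} ($g^{\mathcal{Z}_g}=g$) and is not addressed in your sketch.
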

		\begin{corollary}\label{cor5600}
			Let $g$ be a g-vector with no negative direct summand.
			Then the open (dense) subset $\mathcal{U}$ in Theorem \ref{thm5500} can be considered such that it satisfies in the following extra condition.
			\begin{enumerate}
				\item[$(3)$] All $a\in\mathcal{U}$ are minimal projective presentations.
			\end{enumerate}
			Moreover, any general $M$ in $\mathcal{Z}_g$ is in $\mathcal{O}_{\Coker(a)}$, for some $a\in\mathcal{U}$.
		\end{corollary}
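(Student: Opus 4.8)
The plan is to reduce the requirement ``$a$ is a minimal projective presentation'' to a single condition on the kernel, and then to verify that this condition holds generically precisely because $g$ has no negative summand. Recall from Remark \ref{rem2500} that $g=[P^{g+}]-[P^{g-}]$ with $P^{g+}$ and $P^{g-}$ sharing no nonzero summand, and that $\Hom_\Lambda(g)=\Hom_\Lambda(P^{g-},P^{g+})$. First I would observe that the projective-cover half of minimality is automatic: since $P^{g-}$ and $P^{g+}$ have no common indecomposable summand, $\Hom_\Lambda(P^{g-},\operatorname{top}P^{g+})=0$, so $\operatorname{Im}(a)\subseteq\operatorname{rad}(P^{g+})$ for \emph{every} $a\in\Hom_\Lambda(g)$; hence $P^{g+}\to\Coker(a)$ is always a projective cover. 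Consequently $a$ is a minimal projective presentation of $\Coker(a)$ if and only if $\ker(a)\subseteq\operatorname{rad}(P^{g-})$, i.e. $P^{g-}\to\operatorname{Im}(a)$ is a projective cover. The hypothesis on $g$ enters exactly here: a negative summand would force a summand of the form $Q\to 0$ in the presentation, violating this kernel condition.

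Next I would produce a dense open locus of minimal presentations. Let $g=g_1\oplus\cdots\oplus g_s$ be the generic decomposition (\ref{defthm3400}); since $g$ has no negative summand, $P^{g_i+}\neq 0$ for each $i$. By Remark \ref{rem3500}, a general $a\in\Hom_\Lambda(g)$ decomposes as $a=a_1\oplus\cdots\oplus a_s$ with $a_i\in\Hom_\Lambda(g_i)$ general, hence indecomposable as a complex. The key step is the claim that an indecomposable two-term complex $a_i\colon P^{g_i-}\to P^{g_i+}$ with $P^{g_i+}\neq 0$ is a minimal projective presentation: its image lies in the radical (otherwise a contractible summand would split off, contradicting indecomposability), and it can have no summand $Q\to 0$ with $Q\neq 0$ (again by indecomposability, using $P^{g_i+}\neq 0$), so $\ker(a_i)\subseteq\operatorname{rad}(P^{g_i-})$. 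Because each $\operatorname{Im}(a_i)\subseteq\operatorname{rad}(P^{g_i+})$, the block-diagonal complex $\bigoplus a_i$ admits no contractible summand, so there is no cancellation between the summands and $a=\bigoplus a_i$ is itself a minimal projective presentation of $\Coker(a)=\bigoplus\Coker(a_i)$. Thus the set $\mathcal{U}'$ of minimal presentations contains a dense open subset of the irreducible variety $\Hom_\Lambda(g)$ (Remark \ref{rem1500}). I would then replace $\mathcal{U}$ by $\mathcal{U}\cap\mathcal{U}'$, still dense open, so that (3) holds; conditions (1) and (2) survive the shrinking, since $d(g)$ is unchanged and the restriction of the dominant morphism $\GL_{d(g)}(k)\times\mathcal{U}\to\mathcal{Z}_g$ to the dense open $\GL_{d(g)}(k)\times(\mathcal{U}\cap\mathcal{U}')$ of its irreducible source remains dominant, so its closure is still $\mathcal{Z}_g$.

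For the ``moreover'' I would apply Chevalley's Theorem \ref{thm1200} to the morphism $\Phi\colon\GL_{d(g)}(k)\times\mathcal{U}\to\rep_{d(g)}(\Lambda)$, $(\mathfrak{g},a)\mapsto\mathfrak{g}\cdot\Coker(a)$, which is well defined by Theorem \ref{thm5500}(1). Its image $\bigcup_{a\in\mathcal{U}}\mathcal{O}_{\Coker(a)}$ is constructible with closure the irreducible component $\mathcal{Z}_g$, so by Lemma \ref{lem1100} it contains a dense open subset of $\mathcal{Z}_g$; every general $M\in\mathcal{Z}_g$ then lies in this subset, i.e. $M\in\mathcal{O}_{\Coker(a)}$ for some $a\in\mathcal{U}$. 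I expect the main obstacle to be the middle paragraph: controlling the kernel so that no trivial summand $Q\to 0$ appears in the generic presentation, which is exactly where the absence of negative summands is essential and where the reduction to the generically indecomposable pieces $g_i$ carries the argument.
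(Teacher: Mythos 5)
Your proof is correct and follows essentially the same route as the paper: shrink $\mathcal{U}$ to the locus of minimal presentations (the paper delegates this to Lemma \ref{lem5700}, which you reprove inline via the generic decomposition and Remark \ref{rem3500}), note that passing to a dense open subset does not change $\overline{\bigcup_{a}\mathcal{O}_{\Coker(a)}}$, and deduce the last claim from Chevalley's theorem and Lemma \ref{lem1100}. The only difference is that you expand the paper's one-line ``one can check that a non-zero indecomposable presentation is minimal'' into a detailed radical/kernel argument.
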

		\begin{proof}
			Since for all open dense $\mathcal{U}'\subseteq\mathcal{U}$ we have 
			$$\overline{\bigcup_{a\in\mathcal{U}'}\mathcal{O}_{\Coker(a)}}=\overline{\bigcup_{a\in\mathcal{U}}\mathcal{O}_{\Coker(a)}},$$
			the statement follows from the next lemma \ref{lem5700}. For the last part, note that by Theorem \ref{thm1200}, $\bigcup_{a\in\mathcal{U}}\mathcal{O}_{\Coker(a)}$ is constructible and dense in the irreducible component $\mathcal{Z}_g$. So it is immediately deduced by Lemma \ref{lem1100}.
		\end{proof}
		\begin{lemma}[{\cite[Corollary 2.8]{Pla13}}]\label{lem5700}
			Let $g$ be a g-vector. If the generic decomposition of $g$ has no negative
			term, then a general element of $\Hom_{\Lambda}(g)$ is a minimal projective presentation.
		\end{lemma}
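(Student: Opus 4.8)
The plan is to reduce to the case of a single generically indecomposable summand and then to classify indecomposable two-term complexes whose differential lands in the radical. First I would invoke the generic decomposition $g=g_1\oplus\cdots\oplus g_s$ of Definition-Theorem \ref{defthm3400}; by hypothesis none of the $g_i$ is negative, so $P^{g_i+}\neq 0$ for every $i$ (recall that $g_i$ is negative exactly when $[P^{g_i+}]=0$). By Remark \ref{rem3500}, a general $a\in\Hom_{\Lambda}(g)$ splits as $a=a_1\oplus\cdots\oplus a_s$ with each $a_i\in\Hom_{\Lambda}(g_i)$ a general, hence (by Definition \ref{def3300}) indecomposable, presentation. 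Since both $\operatorname{top}$ and $\operatorname{rad}$ are additive and projective covers of a direct sum are direct sums of projective covers, a finite direct sum of minimal projective presentations is again a minimal projective presentation. Thus it suffices to prove that each $a_i$ is a minimal projective presentation, i.e. to settle the statement for a generically indecomposable g-vector $g'$ that is not negative.

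So assume $g'$ is generically indecomposable and not negative, and write $P^{-1}=P^{g'-}$, $P^{0}=P^{g'+}$, so that $\Hom_{\Lambda}(g')=\Hom_{\Lambda}(P^{-1},P^{0})$. By Remark \ref{rem2500} the modules $P^{-1}$ and $P^{0}$ have no common indecomposable summand. This already forces $\Hom_{\Lambda}(P^{-1},\operatorname{top} P^{0})=0$, because $\Hom_{\Lambda}(P^{-1},S_{(i)})\neq 0$ precisely when $P_{(i)}$ is a summand of $P^{-1}$, while $S_{(i)}$ occurs in $\operatorname{top} P^{0}$ precisely when $P_{(i)}$ is a summand of $P^{0}$. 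Consequently \emph{every} $a'\in\Hom_{\Lambda}(g')$ satisfies $\operatorname{Im}(a')\subseteq\operatorname{rad}(P^{0})$, i.e. the complex is minimal and $P^{0}\twoheadrightarrow\Coker(a')$ is a projective cover. Hence a general $a'$ is an indecomposable complex with differential landing in the radical, and $P^{0}\neq 0$ since $g'$ is not negative.

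It remains to check the right-minimality condition $\Ker(a')\subseteq\operatorname{rad}(P^{-1})$, and this splitting step is the crux of the argument. Factoring $a'$ through its image $\Omega=\operatorname{Im}(a')$ gives a surjection $P^{-1}\twoheadrightarrow\Omega$; lifting a projective cover $Q\twoheadrightarrow\Omega$ along it (possible since $P^{-1}$ is projective) produces $\phi\colon P^{-1}\to Q$ that is surjective because $\operatorname{Im}\phi$ together with the superfluous kernel of $Q\twoheadrightarrow\Omega$ exhausts $Q$. As $Q$ is projective, $\phi$ splits, giving $P^{-1}=Q'\oplus P'$ with $Q'\xrightarrow{\sim}Q$ and $P'=\Ker\phi$; on $P'$ the map $a'$ vanishes, so the complex decomposes as the direct sum of $Q'\xrightarrow{a'|}P^{0}$ and $P'\to 0$. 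Indecomposability together with $P^{0}\neq 0$ forces the shifted-projective summand $P'\to 0$ to vanish, so $P'=0$, $P^{-1}\to\Omega$ is a projective cover, and $\Ker(a')\subseteq\operatorname{rad}(P^{-1})$. This is exactly the place where non-negativity is used: it is what rules out an indecomposable summand of the form $P'\to 0$, whose presence would otherwise spoil minimality; everything else is formal additivity and the lifting property of projective covers.
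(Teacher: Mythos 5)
Your argument is correct and follows essentially the same route as the paper's proof, which reduces via Remark \ref{rem3500} to the generically indecomposable summands and then simply asserts that a non-zero indecomposable (non-negative, reduced) projective presentation is minimal. You have supplied the verification of that assertion in full: the absence of common summands of $P^{g'-}$ and $P^{g'+}$ gives $\operatorname{Im}(a')\subseteq\operatorname{rad}(P^{0})$, and splitting off a potential summand $P'\to 0$ (excluded by indecomposability and non-negativity) gives $\Ker(a')\subseteq\operatorname{rad}(P^{-1})$.
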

		\begin{proof}
			One can check that if a non-zero indecomposable projective presentation is minimal. Thus by Remark \ref{rem3500}, a general morphism of $\Hom_{\Lambda}(g)$ is a minimal projective presentation.
		\end{proof}
		\begin{lemma}\label{lem5800}
			Let $g$ be a g-vector with no negative direct summand. Then $\mathcal{Z}_g$ has an open (dense) subset $\tilde{\mathcal{U}}$ such that any $M\in\tilde{\mathcal{U}}$ admits a minimal projective presentation of the form
			\begin{center}
				\begin{tikzcd}[cramped, sep=small] 
					P^{g-} \arrow[r] & P^{g+} \arrow[r] & M \arrow[r] & 0.
				\end{tikzcd}
			\end{center}
			Therefore, by definition, $g^{\mathcal{Z}_g}=g$ and so the map
			$$g^{-}:\{\text{generically $\tau$-reduced components of $\rep(\Lambda)$}\}\rightarrow\mathbb{Z}^{n}$$
			gives us a bijection between the set of generically $\tau$-reduced components and g-vectors with non-negative\footnote{We say $g$ is \textit{non-negative} (\textit{non--positive}, respectively), if $[P^{g+}]\neq 0$ ($[P^{g-}]\neq 0$, respectively).} direct summands.
			Moreover, $g$ is a direct summand of any g-vector $h$ with $\mathcal{Z}_h=\mathcal{Z}_g$.
		\end{lemma}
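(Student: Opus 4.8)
The plan is to do all the geometry in the first assertion, drawing on Corollary~\ref{cor5600}, and then to obtain the bijection and the summand statement as formal consequences of the characterization of the fibres of $\mathcal{Z}_-$ given in Theorem~\ref{thm5500}.

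First I would produce $\tilde{\mathcal{U}}$. Since $g$ has no negative direct summand, Corollary~\ref{cor5600} furnishes an open dense $\mathcal{U}\subseteq\Hom_\Lambda(g)$ all of whose elements are minimal projective presentations $a\colon P^{g-}\to P^{g+}$, together with a dense open subset of $\mathcal{Z}_g$ each of whose points lies in $\mathcal{O}_{\Coker(a)}$ for some such $a$; I take $\tilde{\mathcal{U}}$ to be this dense open subset. For $M\in\tilde{\mathcal{U}}$ one has $M\cong\Coker(a)$, and since $a$ is a minimal projective presentation the module $M$ inherits the minimal projective presentation $P^{g-}\to P^{g+}\to M\to 0$. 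Reading off its g-vector gives $g^M=[P^{g+}]-[P^{g-}]=g$, so by Definition~\ref{def5100} we conclude $g^{\mathcal{Z}_g}=g$.

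For the bijection, let $A$ be the set of generically $\tau$-reduced components and $B$ the set of g-vectors with no negative summand. By Theorem~\ref{thm5500} the map $\mathcal{Z}_-\colon K_0(\proj\Lambda)\to A$ is surjective, and $\mathcal{Z}_g=\mathcal{Z}_h$ exactly when $g$ and $h$ have the same generic decomposition after deleting negative summands. Each class of this equivalence relation has a unique representative in $B$ (delete all negative summands; uniqueness follows from the uniqueness of generic decompositions), so $\mathcal{Z}_-$ descends to a bijection $\Phi\colon B\to A$, $g\mapsto\mathcal{Z}_g$. The identity $g^{\mathcal{Z}_g}=g$ from the previous step reads $g^-\circ\Phi=\mathrm{id}_B$; since $\Phi$ is a bijection, $g^-|_A$ is forced to be its inverse, hence a bijection $A\to B$ (in particular $g^-$ automatically takes values in $B$).

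For the summand claim, suppose $\mathcal{Z}_h=\mathcal{Z}_g$. Theorem~\ref{thm5500} gives that $g$ and $h$ agree after deleting negative summands, so the generic decomposition $g=g_1\oplus\cdots\oplus g_s$ is precisely the list of non-negative summands of $h$. By Remark~\ref{rem3500} a general $a\in\Hom_\Lambda(h)$ splits along the generic decomposition of $h$; grouping the factors indexed by the non-negative summands separately from those indexed by the negative ones exhibits $h=g\oplus h'$ with $h'$ the sum of the negative summands, whence $g\mid h$. The one point needing care is the transfer of the presentation in the first step: lying in $\mathcal{O}_{\Coker(a)}$ means being isomorphic to $\Coker(a)$, and minimal projective presentations are isomorphism invariants, so the presentation of $\Coker(a)$ genuinely serves for $M$. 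Everything after that is bookkeeping with Theorem~\ref{thm5500}.
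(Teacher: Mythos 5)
Your proof is correct, but it takes a different route from the paper: the paper's entire proof of Lemma~\ref{lem5800} is a one-line citation of \cite[Corollary 2.17]{Pla13}, whereas you reconstruct the statement from material already established earlier in the paper. Your first step is exactly the right use of Corollary~\ref{cor5600}: its clause (3) gives minimality of every $a\in\mathcal{U}$, its last sentence gives the dense open $\tilde{\mathcal{U}}\subseteq\bigcup_{a\in\mathcal{U}}\mathcal{O}_{\Coker(a)}$, and the transfer of the minimal presentation along the isomorphism $M\cong\Coker(a)$ is the one point that needs saying, which you say. The identity $g^{\mathcal{Z}_g}=g$ then follows from Definition~\ref{def5100} and Remark~\ref{rem2500}. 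Your derivation of the bijection is clean: the fibre description in Theorem~\ref{thm5500} plus the observation that each fibre has a unique representative with no negative summand (uniqueness of generic decompositions) makes $\mathcal{Z}_-|_B$ a bijection, and $g^-\circ\Phi=\mathrm{id}_B$ forces $g^-$ to be its inverse. The summand claim likewise falls out of the fibre description together with the fact that any sub-sum of a generic decomposition is a direct summand. What your approach buys is a proof that is self-contained modulo the paper's own Corollary~\ref{cor5600} and Theorem~\ref{thm5500} (which of course themselves rest on Plamondon's results), making visible exactly which inputs are used; what the paper's citation buys is brevity and a pointer to the original, slightly more general source.
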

		\begin{proof}
			It follows from \cite[Corollary 2.17]{Pla13}.
		\end{proof}
		\begin{remark}\label{rem5810}
			In the case that $g$ has no negative direct summand, it was shown in \cite[Section 2.6]{Pla13} that
			$$\Hom_{\Lambda}(g)_{max}:=\{a\in\Hom_{\Lambda}(g)\mid\text{$a$ is of maximal rank}\}$$
			is equal to
			$$\Hom_{\Lambda}(g)_{d(g)}:=\{a\in\Hom_{\Lambda}(g)\mid[\Coker(a)]=d(g)\}.$$
			As the first one is an open dense subset of $\Hom_{\Lambda}(g)$, then so is the second one.
		\end{remark}
		We spend the rest of this section to prove the following conjecture under the non-decreasing condition. Indeed, we will show that the non-decreasing condition on a given g-vector $g$ guarantees that $|\mathcal{Z}_g|$ is lower than or equal to $n$, \ref{thm6500}. Afterwards, we apply results of \cite{As21} regarding the wall and chamber structure of $\Lambda$ to prove the second statement, when the non-decreasing condition is satisfied.
		\begin{lemma}[{\cite[Corollary 1.4]{GLFS23}}]\label{lem6050}
			For positive integers $d$ and $d'$, the following functions are upper semi-continuous.
			\[\begin{array}{ll}
				\hom_{\Lambda}(-,\tau ?) :\rep_d(\Lambda)\times\rep_{d'}(\Lambda)\rightarrow\mathbb{Z},& \hom_{\Lambda}(-,\tau -):\rep_d(\Lambda)\rightarrow\mathbb{Z}
			\end{array}\]
		\end{lemma}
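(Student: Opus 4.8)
The plan is to prove the two-variable statement first and to deduce the diagonal one from it for free. If $\Phi(M,N)=\hom_\Lambda(M,\tau N)$ is upper semi-continuous on $\rep_d(\Lambda)\times\rep_{d'}(\Lambda)$, then its restriction to the diagonal is the composite $\Phi\circ\Delta$ with the diagonal morphism $\Delta:\rep_d(\Lambda)\to\rep_d(\Lambda)\times\rep_d(\Lambda)$; since $\{\Phi\circ\Delta<t\}=\Delta^{-1}\{\Phi<t\}$ and $\Delta$ is a morphism of varieties, the preimage of an open set is open, so $\hom_\Lambda(-,\tau -)$ is automatically upper semi-continuous. Thus I would concentrate entirely on the first function.

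The engine of the proof is the transpose description of $\tau$. Fix a minimal projective presentation $P_1\xrightarrow{f}P_0\to N\to 0$ of $N$. Applying the Nakayama functor $\nu=D\Hom_\Lambda(-,\Lambda)$ yields a left exact sequence $0\to\tau N\to\nu P_1\xrightarrow{\nu f}\nu P_0$, and applying $\Hom_\Lambda(M,-)$ together with the natural isomorphism $\Hom_\Lambda(M,\nu P)\cong D\Hom_\Lambda(P,M)$ for $P$ projective gives
\[\hom_\Lambda(M,\tau N)=\dim\Coker\!\big(\Hom_\Lambda(P_0,M)\xrightarrow{-\circ f}\Hom_\Lambda(P_1,M)\big).\]
Because $\dim_k\Hom_\Lambda(P_i,M)$ depends only on the dimension vector $d$, the right-hand side equals a constant (in $M$) minus the rank of the linear map $-\circ f$. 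Hence the whole problem reduces to showing that this rank is a lower semi-continuous function of the pair $(M,N)$.

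For a fixed presentation the entries of $-\circ f$ are bilinear in the structure constants of $M$ and of $f$, so if $f=f_N$ could be chosen to depend morphically on $N$ the rank would be lower semi-continuous and we would be done. This is the crux, and the main obstacle: a minimal presentation does not vary algebraically over all of $\rep_{d'}(\Lambda)$, since the ranks of $P_0$ and $P_1$---namely $\dim_k(N/\operatorname{rad}N)$ and $\dim_k(\Omega N/\operatorname{rad}\Omega N)$ for the syzygy $\Omega N=\Ker(P_0\to N)$---jump. The remedy I would use is to note that these rank data are themselves upper semi-continuous in $N$, stratify $\rep_{d'}(\Lambda)$ into finitely many locally closed strata on which they are constant, and build on each stratum a universal minimal presentation (a morphic family of projective covers and syzygies); on each such stratum the displayed formula exhibits $\Phi$ as a constant minus the rank of a matrix with regular entries, hence upper semi-continuous there.

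The remaining---and I expect hardest---point is to globalize from the strata to all of $\rep_d(\Lambda)\times\rep_{d'}(\Lambda)$. Here I would exploit that the stratification is ordered by the upper semi-continuous presentation ranks, so that specialization to a smaller stratum can only enlarge $P_0,P_1$; one then checks that the generic value of $\Phi$ on a stratum does not exceed its values on the strata contained in its closure, which is exactly the inequality direction compatible with upper semi-continuity (and consistent with the already available semi-continuity of $\ext^1_\Lambda(N,M)$ via the Auslander--Reiten formula $\overline{\hom}_\Lambda(M,\tau N)=\ext^1_\Lambda(N,M)$). Patching the strata with this monotonicity yields that $\{\Phi\ge t\}$ is closed, completing the argument.
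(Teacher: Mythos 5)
The paper offers no proof of this lemma at all: it is imported verbatim from \cite[Corollary 1.4]{GLFS23}, so there is no internal argument to compare yours against. Within your attempt, the reduction of the diagonal case to the two-variable case via the diagonal morphism is correct, and so is your key formula
\[
\hom_\Lambda(M,\tau N)=\dim_k\Coker\bigl(\Hom_\Lambda(P_0,M)\xrightarrow{-\circ f}\Hom_\Lambda(P_1,M)\bigr)
\]
for a minimal presentation $P_1\xrightarrow{f}P_0\to N\to 0$, together with the observation that $\dim_k\Hom_\Lambda(P_i,M)$ depends only on the dimension vector of $M$. For each \emph{fixed} $N$ this does give upper semi-continuity in $M$. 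But the joint semi-continuity in $(M,N)$ --- the actual content of the lemma --- is exactly the step you leave as a sketch, and that sketch has a genuine gap. First, producing a ``universal minimal presentation'' as a morphic family over each stratum is itself nontrivial (projective covers are not canonical, and algebraic sections must be constructed). Second, and more seriously, the patching is not a proof: upper semi-continuity on each locally closed stratum plus an inequality between the \emph{generic} value on a stratum and the values on its boundary strata does not imply that $\{\Phi\ge t\}$ is closed; one needs the inequality at \emph{every} boundary point. Finally, the heuristic that ``specialization only enlarges $P_0,P_1$, which is the right direction'' conceals the real obstruction: along a degeneration of $N$ the limit of the family of minimal presentations $f_N$ need not present the limit module at all, since $\Coker$ does not vary continuously in families --- this discontinuity is precisely why Plamondon, Derksen--Fei and Gei\ss--Labardini-Fragoso--Schr\"oer work on presentation spaces rather than on module varieties. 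So the two cokernel computations you want to compare are not computations of the same invariant.

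There is a much shorter route, using only tools already present in the paper, which is essentially the one taken in \cite{GLFS23}. By Proposition \ref{prop4760} one may check semi-continuity on each connected component, where the dimension vector is constant, say $[M]=d$ on the first factor. By Lemma \ref{lem6700},
\[
\hom_\Lambda(M,\tau N)=\hom_\Lambda(N,M)-\langle g^N,[M]\rangle=\hom_\Lambda(N,M)+\sum_{i=1}^{n}d_i\,(-g^N_i).
\]
Here $\hom_\Lambda(N,M)$ is upper semi-continuous on the product by Lemma \ref{lem300}, each $-g^N_i$ is upper semi-continuous in $N$ by Remark \ref{rem2400}, and the coefficients $d_i$ are fixed non-negative integers; a non-negative combination and sum of upper semi-continuous functions is upper semi-continuous. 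This closes the gap without any stratification, at the cost of invoking the semi-continuity of $-g^N_i$ (which is where the genuine work of \cite{GLFS23} lies).
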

		\begin{conjecture}[{\cite[Conjecture 6.3]{CILFS14}}]\label{con5900}
			Let $\mathcal{Z}=\overline{\mathcal{Z}_1\oplus\cdots\oplus\mathcal{Z}_s}$ be a generically $\tau$-reduced component. Then the following statements hold.
			\begin{itemize}
				\item[$(1)$] $|\mathcal{Z}|\le|\Lambda|$.
				\item[$(2)$] $|\mathcal{Z}|=|\Lambda|$ if and only if for all $1\le i\le s$,
				\[\min\{\hom_{\Lambda}(X,\tau X)\mid X\in\mathcal{Z}_i\}=0,\]
				and $\mathcal{Z}$ is maximal, that is, if $\mathcal{Z}'$ and $\overline{\mathcal{Z}\oplus\mathcal{Z}'}$ are generically $\tau$-reduced components, then $|\mathcal{Z}|=|\overline{\mathcal{Z}\oplus\mathcal{Z}'}|$.
			\end{itemize}
		\end{conjecture}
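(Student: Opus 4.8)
The plan is to establish the conjecture under the non-decreasing condition on $g^{\mathcal{Z}}$, which is exactly the scope of Theorems~\ref{thm6500} and~\ref{cor6650}; the whole argument proceeds by transporting the statement across Plamondon's bijection (Lemma~\ref{lem5800}) and reading it off from the generic decomposition of $g^{\mathcal{Z}}$. First I would record that, since $\mathcal{Z}=\overline{\mathcal{Z}_1\oplus\cdots\oplus\mathcal{Z}_s}$ is generically $\tau$-reduced with each $\mathcal{Z}_i$ generically indecomposable, the connecting Theorem~\ref{thm6200} upgrades the additive identity $g^{\mathcal{Z}}=g^{\mathcal{Z}_1}+\cdots+g^{\mathcal{Z}_s}$ of Lemma~\ref{lem5200} to a \emph{generic} decomposition $g^{\mathcal{Z}}=g^{\mathcal{Z}_1}\oplus\cdots\oplus g^{\mathcal{Z}_s}$. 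Each $g^{\mathcal{Z}_i}$ is generically indecomposable, since a general module in $\mathcal{Z}_i$ is indecomposable and hence so is its minimal projective presentation; and by the injectivity built into the bijection of Lemma~\ref{lem5800} distinct components $\mathcal{Z}_i$ produce distinct g-vectors $g^{\mathcal{Z}_i}$. Consequently $|\mathcal{Z}|=|\{\mathcal{Z}_1,\dots,\mathcal{Z}_s\}|=|\ind(g^{\mathcal{Z}})|$, so every question about $|\mathcal{Z}|$ becomes a question about the number of \emph{distinct} generically indecomposable summands of $g^{\mathcal{Z}}$.

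For part~(1), once $g^{\mathcal{Z}}$ satisfies the non-decreasing condition (Definition~\ref{def4400}) I would invoke Theorem~\ref{thm4700}: the set $\ind(g^{\mathcal{Z}})$ is then linearly independent in $K_0(\proj\Lambda)_{\mathbb{R}}\cong\mathbb{R}^n$, whence $|\mathcal{Z}|=|\ind(g^{\mathcal{Z}})|\le n=|\Lambda|$. The ``in particular'' clause is immediate, for if $\Lambda$ satisfies the non-decreasing condition then so does every g-vector, in particular every $g^{\mathcal{Z}}$.

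For part~(2) I would translate the two hypotheses into g-vector language. The condition $\min\{\hom_{\Lambda}(X,\tau X)\mid X\in\mathcal{Z}_i\}=0$ says, by upper semicontinuity (Lemma~\ref{lem6050}) together with $\tau$-reducedness, that a general module of $\mathcal{Z}_i$ is $\tau$-rigid; equivalently each $g^{\mathcal{Z}_i}$ is the g-vector of an indecomposable $2$-term presilting complex. Since $\mathcal{Z}$ is a component, Theorem~\ref{thm6200} gives $e(g^{\mathcal{Z}_i},g^{\mathcal{Z}_j})=0$ for $i\neq j$, so the presentations assemble into a single $2$-term presilting complex $\mathrm{P}$ with $g^{\mathrm{P}}=g^{\mathcal{Z}}$ having exactly $|\mathcal{Z}|$ indecomposable summands. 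Maximality of $\mathcal{Z}$ then forbids enlarging $\mathrm{P}$: any presilting completion would, through Lemma~\ref{lem5800}, yield a strictly larger generically $\tau$-reduced component, so $\mathrm{P}$ must already be silting and $|\mathcal{Z}|=n$. Conversely, if $|\mathcal{Z}|=n$ then (using the non-decreasing condition) $\ind(g^{\mathcal{Z}})$ is a linearly independent set of $n$ vectors, hence a basis; by the wall-and-chamber description recalled in Remark~\ref{rem3750} this forces $g^{\mathcal{Z}}\in\mathcal{C}^{\circ}(\mathrm{P})$ for a $2$-term silting complex $\mathrm{P}$ whose indecomposable summands are the $g^{\mathcal{Z}_i}$. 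Each such summand is $\tau$-rigid, giving $\min\hom_{\Lambda}(X,\tau X)=0$ on $\mathcal{Z}_i$, and fullness of $\mathrm{P}$ gives maximality; this is precisely the content of Theorem~\ref{cor6650}.

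The hard part, and the one place where the hypothesis is genuinely consumed, is the linear-independence step underlying Theorem~\ref{thm4700}. A linear relation $\bigoplus_{i\in I}a_i g^{\mathcal{Z}_i}=\bigoplus_{j\in J}b_j g^{\mathcal{Z}_j}$ among distinct generic summands (with $I\cap J=\emptyset$) must be shown to exhibit each participating $g^{\mathcal{Z}_i}$ inside $D_{g^{\mathcal{Z}_i}}$, forcing it to be tame, after which uniqueness of generic decompositions collapses $I$ and $J$ onto the same set and contradicts $I\cap J=\emptyset$. Making this dichotomy run cleanly---in particular verifying that membership in the direct-sum cone of the remaining summands is inherited by each individual summand---is exactly where the non-decreasing condition is used; everything else is transport of structure across the bijections of Plamondon and the connecting theorem.
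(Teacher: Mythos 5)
Your proposal is correct and follows essentially the same route as the paper: part (1) via the connecting Theorem~\ref{thm6200} identifying $|\mathcal{Z}|$ with $|\ind(g^{\mathcal{Z}})|$ and then Theorem~\ref{thm4700} for linear independence under the non-decreasing condition (this is Theorem~\ref{thm6500}), and part (2) via the silting/wall-and-chamber translation of Remark~\ref{rem3750} (this is Corollary~\ref{cor6650}). The only difference is that you spell out the translation between ``$\tau$-rigid general modules plus maximality'' and ``$g^{\mathcal{Z}}\in\mathcal{C}^{\circ}(\mathrm{P})$'' more explicitly than the paper does.
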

		Now, we study the link between generic decompositions of g-vectors and decompositions of generically $\tau$-reduced components. It is the key to achieving our goal.
		\begin{lemma}[{\cite[Lemma 2.6]{Pla13}}]\label{lem6000}
			For projective presentations
			\begin{center}
				\begin{tikzcd}[cramped, sep=small] 
					P^{-1} \arrow[r, "a"] &P^0 \arrow[r] &M \arrow[r] &0
				\end{tikzcd}
				and
				\begin{tikzcd}[cramped, sep=small] 
					P'^{-1} \arrow[r, "a'"] &P'^0 \arrow[r] &M' \arrow[r] &0,
				\end{tikzcd}
			\end{center}
			$e(a,a')\ge\hom_{\Lambda}(M',\tau M)$ and the equality holds if $a$ is minimal.
		\end{lemma}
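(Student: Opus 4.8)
The plan is to compute $e(a,a')$ explicitly as a cokernel in the homotopy category, then identify that cokernel via the Nakayama functor $\nu$ with $\Hom_\Lambda(M',\Ker(\nu a))$; the inequality and the role of minimality will fall out from the fact that $\Ker(\nu a)$ is $\tau M$ up to an injective summand.

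First I would unwind $e(a,a')=\hom_{K^b(\proj\Lambda)}(a,a'[1])$ (Definition \ref{def2600}). Viewing $a\colon P^{-1}\to P^0$ and $a'\colon P'^{-1}\to P'^0$ as two-term complexes concentrated in degrees $-1,0$, a degree-zero chain map $a\to a'[1]$ amounts to a single morphism in $\Hom_\Lambda(P^{-1},P'^0)$ (its other component is forced to be $0$), and such a map is null-homotopic precisely when it has the form $s^0a-a's^{-1}$ for some $s^{-1}\in\Hom_\Lambda(P^{-1},P'^{-1})$ and $s^0\in\Hom_\Lambda(P^0,P'^0)$. Thus
$$e(a,a')=\dim\Coker\Big(\Hom_\Lambda(P^{-1},P'^{-1})\oplus\Hom_\Lambda(P^0,P'^0)\xrightarrow{(s^{-1},s^0)\mapsto s^0a-a's^{-1}}\Hom_\Lambda(P^{-1},P'^0)\Big).$$

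Next I would simplify this. Since $P^{-1}$ and $P^0$ are projective, the functors $\Hom_\Lambda(P^{-1},-)$ and $\Hom_\Lambda(P^0,-)$ are exact, so applying them to $P'^{-1}\xrightarrow{a'}P'^0\to M'\to0$ shows that factoring out the $a's^{-1}$-terms replaces $\Hom_\Lambda(P^{-1},P'^0)$ by $\Hom_\Lambda(P^{-1},M')$, while the $s^0a$-terms become exactly the image of precomposition with $a$ from $\Hom_\Lambda(P^0,M')$. Hence the cokernel above equals $\Coker\big(\Hom_\Lambda(P^0,M')\xrightarrow{\cdot a}\Hom_\Lambda(P^{-1},M')\big)$. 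I would then dualize and apply the Auslander--Reiten adjunction $\Hom_\Lambda(M',\nu P)\cong D\Hom_\Lambda(P,M')$, which is natural in the projective $P$ and carries precomposition with $a$ to postcomposition with $\nu a$; this yields $D\Coker(\cdot a)\cong\Ker\big((\nu a)_*\colon\Hom_\Lambda(M',\nu P^{-1})\to\Hom_\Lambda(M',\nu P^0)\big)=\Hom_\Lambda(M',\Ker(\nu a))$, so that $e(a,a')=\hom_\Lambda(M',\Ker(\nu a))$.

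It remains to compare $\Ker(\nu a)$ with $\tau M$. Applying $\Hom_\Lambda(-,\Lambda)$ to the presentation of $M$ exhibits $\operatorname{Tr}_aM:=\Coker\Hom_\Lambda(a,\Lambda)$ with $\Ker(\nu a)=D\operatorname{Tr}_aM$; since the transpose is well defined only up to projective summands and the minimal presentation gives the reduced transpose $\operatorname{Tr}M$, we get $\Ker(\nu a)\cong\tau M\oplus J$ with $J$ injective and $J=0$ when $a$ is minimal. As $\tau M$ is then a direct summand of $\Ker(\nu a)$, this gives $e(a,a')=\hom_\Lambda(M',\tau M)+\hom_\Lambda(M',J)\ge\hom_\Lambda(M',\tau M)$, with equality whenever $J=0$, in particular when $a$ is a minimal projective presentation. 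The main obstacle I anticipate is essentially bookkeeping: keeping the signs and homotopies straight in the first step, and checking the naturality of the Nakayama adjunction carefully enough to be sure $(\nu a)_*$ really is the dual of precomposition by $a$. The splitting $\Ker(\nu a)\cong\tau M\oplus(\text{injective})$ is the standard ``transpose is defined up to projectives'' phenomenon, and it is exactly this injective summand that accounts for the strict inequality when $a$ fails to be minimal.
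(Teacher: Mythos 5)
Your proof is correct: the identification of $\Hom_{K^b(\proj\Lambda)}(a,a'[1])$ with $\Coker\bigl(\Hom_\Lambda(P^0,M')\xrightarrow{-\circ a}\Hom_\Lambda(P^{-1},M')\bigr)$, the dualization via the natural isomorphism $D\Hom_\Lambda(P,M')\cong\Hom_\Lambda(M',\nu P)$ yielding $e(a,a')=\hom_\Lambda(M',\Ker(\nu a))$, and the splitting $\Ker(\nu a)\cong\tau M\oplus J$ with $J$ injective and $J=0$ for minimal $a$ are all sound, and together they give exactly the stated inequality and equality case. The paper offers no proof of its own here --- it simply cites \cite[Lemma 2.6]{Pla13} --- and your argument is essentially the standard one underlying that reference.
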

		The next lemma which plays an important role in the proofs of Theorem \ref{thm6200} and Theorem \ref{thm6800} was independently proved by Calvin Pfeifer in a similar way \cite[Lemma 4.6]{Pfi23}.
		\begin{lemma}\label{lem6100}
			Let $g$ and $h$ be g-vectors with no negative direct summand. Then
			$$e(g,h)=\min\hom_{\Lambda}(\mathcal{Z}_{h},\tau\mathcal{Z}_{g}).$$
		\end{lemma}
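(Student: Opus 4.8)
\textbf{Proof proposal for Lemma \ref{lem6100}.}

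The plan is to express both sides of the claimed equality in terms of the E-invariant evaluated on minimal projective presentations, and then to invoke Lemma \ref{lem6000}, which already relates $e(a,a')$ to $\hom_{\Lambda}(M',\tau M)$ whenever $a$ is minimal. Since both $g$ and $h$ have no negative direct summand, Lemma \ref{lem5700} guarantees that a general element $a\in\Hom_{\Lambda}(g)$ (respectively $a'\in\Hom_{\Lambda}(h)$) is a minimal projective presentation, so we will always be able to work on a dense open subset where minimality holds and Lemma \ref{lem6000} applies with equality.

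First I would fix general elements $a\in\Hom_{\Lambda}(g)$ and $a'\in\Hom_{\Lambda}(h)$, which we may take to be minimal projective presentations by Lemma \ref{lem5700}; write $M=\Coker(a)$ and $M'=\Coker(a')$. By the upper semi-continuity of $e(?,-)$ recorded in Definition \ref{def2600}, such a general pair computes $e(g,h)=e(a,a')$. Since $a$ is minimal, Lemma \ref{lem6000} gives the exact identity $e(a,a')=\hom_{\Lambda}(M',\tau M)$. The task then reduces to showing that, as $(a,a')$ ranges over a dense open subset, the pair $(M',M)$ traces out a dense enough family in $\mathcal{Z}_h\times\mathcal{Z}_g$ to realize the minimum $\min\hom_{\Lambda}(\mathcal{Z}_h,\tau\mathcal{Z}_g)$.

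The key geometric step is the comparison of two dense open subsets. On one side, Corollary \ref{cor5600} and Lemma \ref{lem5800} tell us that the cokernel map $\Coker:\mathcal{U}\to\rep_{d(g)}(\Lambda)$ has image dense in $\mathcal{Z}_g$, and that a general $M\in\mathcal{Z}_g$ is of the form $\Coker(a)$ with $a$ general in $\Hom_{\Lambda}(g)$; the analogous statement holds for $h$ and $\mathcal{Z}_h$. On the other side, by Lemma \ref{lem6050} the function $\hom_{\Lambda}(-,\tau-)$ is upper semi-continuous on $\mathcal{Z}_h\times\mathcal{Z}_g$, so its minimum $\min\hom_{\Lambda}(\mathcal{Z}_h,\tau\mathcal{Z}_g)$ is attained on a dense open subset $\mathcal{W}\subseteq\mathcal{Z}_h\times\mathcal{Z}_g$ (Remark \ref{rem400}). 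Pulling $\mathcal{W}$ back along the product of the two cokernel maps yields a dense open set of pairs $(a',a)$, and on the intersection with the locus where both are minimal and $(a,a')$ computes $e(g,h)$, the chain $e(g,h)=e(a,a')=\hom_{\Lambda}(M',\tau M)=\min\hom_{\Lambda}(\mathcal{Z}_h,\tau\mathcal{Z}_g)$ gives the result.

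The main obstacle I anticipate is precisely the bookkeeping of genericity: one must ensure that the dense open subset of $\Hom_{\Lambda}(g)\times\Hom_{\Lambda}(h)$ on which $(a,a')$ computes $e(g,h)$, the subset on which both $a,a'$ are minimal, and the preimage of $\mathcal{W}$ under $(\Coker,\Coker)$ have a common point—equivalently, a nonempty intersection. This follows because each is dense open in the irreducible variety $\Hom_{\Lambda}(g)\times\Hom_{\Lambda}(h)$ (irreducibility via Remark \ref{rem1500}, and density of the cokernel preimage via dominance of $\Coker$ together with Theorem \ref{thm1200}), so a finite intersection of dense opens remains dense and in particular nonempty. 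Care is needed to confirm that $(\Coker,\Coker)$ is dominant onto $\mathcal{Z}_h\times\mathcal{Z}_g$ so that the preimage of the dense open $\mathcal{W}$ is again dense open; this is where Corollary \ref{cor5600} and the identification of $\Hom_{\Lambda}(g)_{max}=\Hom_{\Lambda}(g)_{d(g)}$ from Remark \ref{rem5810} do the real work.
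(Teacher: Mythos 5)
Your proposal is correct and rests on the same pillars as the paper's proof: Lemma \ref{lem6000} applied to minimal presentations (available generically by Lemma \ref{lem5700}, since $g$ and $h$ have no negative direct summands), together with upper semi-continuity of $e(-,?)$ and of $\hom_{\Lambda}(-,\tau ?)$. The only organizational difference is that you run a single chain of equalities on one common dense open locus, whereas the paper proves the two inequalities separately: for $e(g,h)\le m_0$ it starts from a general pair of modules in $\mathcal{Z}_h\times\mathcal{Z}_g$ and reads off their minimal presentations directly via Lemma \ref{lem5800}, never invoking the cokernel map; for $e(g,h)\ge m_0$ it takes a general pair in $\Hom_{\Lambda}(g)_{d(g)}\times\Hom_{\Lambda}(h)_{d(h)}$ and only needs $\hom_{\Lambda}(\Coker(b),\tau\Coker(a))\ge m_0$, which holds for \emph{any} modules in the components. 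The one step of yours that needs an extra word is the claim that $(\Coker,\Coker)$ is dominant onto $\mathcal{Z}_h\times\mathcal{Z}_g$: Theorem \ref{thm5500} and Corollary \ref{cor5600} only assert that the union of the orbits $\mathcal{O}_{\Coker(a)}$ is dense, i.e.\ the image of $\Coker$ is dense only after saturation by the $\GL_{d(g)}(k)$-action, because the morphism $\Coker$ selects one point in each isomorphism class. This is harmless in your argument because $\hom_{\Lambda}(N,\tau M)$ is an isomorphism invariant, so your open locus $\mathcal{W}$ is a union of orbits and therefore meets the honest image of $(\Coker,\Coker)$; with that remark inserted, the preimage of $\mathcal{W}$ is a nonempty open (hence dense) subset of the irreducible variety $\Hom_{\Lambda}(g)\times\Hom_{\Lambda}(h)$, your three dense opens intersect, and the chain of equalities closes the proof.
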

		\begin{proof}
			We set $m_0=\min\hom_{\Lambda}(\mathcal{Z}_{h},\tau\mathcal{Z}_{g})$.
			By Lemma \ref{lem5800}, there exist non-empty open (dense) subsets $\tilde{\mathcal{U}_g}\subseteq\mathcal{Z}_g$ and $\tilde{\mathcal{U}_h}\subseteq\mathcal{Z}_h$ with the property that each $M\in\tilde{\mathcal{U}_g}$ and $N\in\tilde{\mathcal{U}_h}$ have minimal projective presentations of the form
			\begin{center}
				\begin{tikzcd}[cramped, sep=small] 
					P^{g-} \arrow[r, "a"] & P^{g+} \arrow[r] & M \arrow[r] & 0, &
					P^{h-} \arrow[r, "b"] & P^{h+} \arrow[r] & N \arrow[r] & 0.
				\end{tikzcd}
			\end{center}
			By Lemma \ref{lem6050}, the function $\hom_{\Lambda}(-,\tau ?):\mathcal{Z}_h\times\mathcal{Z}_g\rightarrow\mathbb{Z}$ is upper semi-continuous. Thus there exists a non-empty open (dense) subset $\tilde{\mathcal{V}}\subseteq\tilde{\mathcal{U}_h}\times\tilde{\mathcal{U}_g}$ such that for all $(N,M)\in\tilde{\mathcal{V}}$,
			$\hom_{\Lambda}(N,\tau M)=m_0$.
			So Lemma \ref{lem6000} implies that $e(a,b)=m_0$ and thus $e(g,h)\le m_0$.\\
			On the other hand, as the function $e(-,?):\Hom_{\Lambda}(g)\times\Hom_{\Lambda}(h)\rightarrow\mathbb{Z}$ is upper semi-continuous,
			$$\mathcal{V}=\{(a,b)\in\Hom_{\Lambda}(g)\times\Hom_{\Lambda}(h)\mid e(a,b)=e(g,h)\}$$
			is open and dense. Whereas $\Hom_{\Lambda}(g)_{d(g)}\subseteq\Hom_{\Lambda}(g)$ and $\Hom_{\Lambda}(h)_{d(h)}\subseteq\Hom_{\Lambda}(h)$ are non-empty open (dense) subsets, by Corollary \ref{cor5600}(3),
			$$\mathcal{V}'=\{(a,b)\in\mathcal{V}\mid\text{$a\in\Hom_{\Lambda}(g)_{d(g)}$ and $b\in\Hom_{\Lambda}(h)_{d(h)}$ are minimal}\}$$
			is open (and dense), as well. Hence, by Lemma \ref{lem6000}, for each $(a,b)\in\mathcal{V}'$, we have
			$$e(g,h)=e(a,b)=\hom_{\Lambda}(\Coker(b),\tau\Coker(a))\ge m_0.$$
			\end{proof}
		\begin{theorem}\label{thm6200}
			Let $\{\mathcal{Z}_1, \mathcal{Z}_2,\cdots, \mathcal{Z}_s\}$ be a set of irreducible components of $\rep\Lambda$. Then the following statements are equivalent.
			\begin{enumerate}
				\item[$(1)$] $\mathcal{Z}=\overline{\mathcal{Z}_{1}\oplus\mathcal{Z}_{2}\oplus\cdots\oplus \mathcal{Z}_{s}}$ is a generically $\tau$-reduced component.
				\item[$(2)$] Each $\mathcal{Z}_i$ is a generically $\tau$-reduced component and
				$$\min\hom_{\Lambda}(\mathcal{Z}_i,\tau\mathcal{Z}_j)=0,$$
				for all $1\le i\neq j\le s$.
				\item[$(3)$] Each $\mathcal{Z}_i$ is a generically $\tau$-reduced component and
				$$e(g^{\mathcal{Z}_i},g^{\mathcal{Z}_j})=0,$$
				for all $1\le i\neq j\le s$.
			\end{enumerate}
			In this case, $g^{\mathcal{Z}}=g^{\mathcal{Z}_1}\oplus\cdots\oplus g^{\mathcal{Z}_s}$ is the generic decomposition of $g^{\mathcal{Z}}$.
		\end{theorem}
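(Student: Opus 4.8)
The plan is to prove the cycle $(1)\Rightarrow(2)\Rightarrow(1)$ by a geometric dimension count, then obtain $(2)\Leftrightarrow(3)$ from the translation in Lemma \ref{lem6100}, and finally deduce the direct-sum statement by an E-invariant argument. Throughout, write $\mathcal{Z}=\overline{\mathcal{Z}_1\oplus\cdots\oplus\mathcal{Z}_s}$ and, for a general $N\in\mathcal{Z}$, use Lemma \ref{lem4900} to decompose $N=N_1\oplus\cdots\oplus N_s$ with $(N_1,\dots,N_s)$ a general point of $\prod_i\mathcal{Z}_i$. Two additivity facts, evaluated at such an $N$, drive everything. First, since $\tau$ is additive and $\Hom$ biadditive,
$$\hom_\Lambda(N,\tau N)=\sum_i\hom_\Lambda(N_i,\tau N_i)+\sum_{i\neq j}\hom_\Lambda(N_i,\tau N_j),$$
and by the upper semi-continuity of Lemma \ref{lem6050} together with the genericity of the tuple, the off-diagonal terms equal $m_{ij}:=\min\hom_\Lambda(\mathcal{Z}_i,\tau\mathcal{Z}_j)$ while each diagonal term equals the generic value $h_i$ of $X\mapsto\hom_\Lambda(X,\tau X)$ on $\mathcal{Z}_i$. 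Second, the codimension additivity $\codim_{\mathcal{Z}}\mathcal{O}_N=\sum_i\codim_{\mathcal{Z}_i}\mathcal{O}_{N_i}$, which follows from the orbit-dimension formula of the \emph{Orbits} paragraph applied to a generic-fibre count for the dominant map $\GL_d(k)\times\prod_i\mathcal{Z}_i\to\mathcal{Z}$ (cf. \cite[Theorem 1.1]{CBSc02}) and which I will invoke once $\mathcal{Z}$ is known to be a component. Abbreviate $c=\codim_{\mathcal{Z}}\mathcal{O}_N$ and $c_i=\codim_{\mathcal{Z}_i}\mathcal{O}_{N_i}$; Remark \ref{rem5300} always gives $c_i\le h_i$ and $c\le\hom_\Lambda(N,\tau N)$.

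For $(1)\Rightarrow(2)$: a generically $\tau$-reduced $\mathcal{Z}$ is in particular an irreducible component, so both additivity formulas hold at a common general $N$ and the defining equality reads $c=\hom_\Lambda(N,\tau N)$. Chaining gives
$$\sum_i c_i=c=\hom_\Lambda(N,\tau N)=\sum_i h_i+\sum_{i\neq j}m_{ij}\ge\sum_i c_i+\sum_{i\neq j}m_{ij}\ge\sum_i c_i,$$
forcing every inequality to be an equality. Thus $\sum_{i\neq j}m_{ij}=0$, whence $m_{ij}=0$ for all $i\neq j$, and $h_i=c_i$ for each $i$, which is exactly the assertion that each $\mathcal{Z}_i$ is generically $\tau$-reduced. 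This squeeze — extracting both the vanishing of the cross terms and the $\tau$-reducedness of every summand from a single scalar identity — is the crux.

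For $(2)\Rightarrow(1)$: I first upgrade $(2)$ to ``$\mathcal{Z}$ is a component''. By Auslander--Reiten duality $\ext^1_\Lambda(X,Y)\le\hom_\Lambda(Y,\tau X)$ pointwise, so on the dense open locus where $\hom_\Lambda(\mathcal{Z}_j,\tau\mathcal{Z}_i)$ attains its minimum $m_{ji}=0$ we get $\ext^1_\Lambda(X,Y)=0$; hence $\min\ext^1_\Lambda(\mathcal{Z}_i,\mathcal{Z}_j)=0$ for all $i\neq j$ and Definition-Theorem \ref{defthm4800} makes $\mathcal{Z}$ a component. Now codimension additivity applies, and since $c_i=h_i$ and $m_{ij}=0$ the hom-additivity yields $\hom_\Lambda(N,\tau N)=\sum_i h_i=\sum_i c_i=c$, i.e. $\mathcal{Z}$ is generically $\tau$-reduced. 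For $(2)\Leftrightarrow(3)$, each generically $\tau$-reduced $\mathcal{Z}_i$ has, by Lemma \ref{lem5800}, a g-vector $g^{\mathcal{Z}_i}$ with no negative direct summand and $\mathcal{Z}_{g^{\mathcal{Z}_i}}=\mathcal{Z}_i$; Lemma \ref{lem6100} then reads $e(g^{\mathcal{Z}_i},g^{\mathcal{Z}_j})=\min\hom_\Lambda(\mathcal{Z}_j,\tau\mathcal{Z}_i)=m_{ji}$, so under the common hypothesis the two cross-term conditions are the same family of integers.

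Finally, the direct-sum statement: from $(3)$, $e(g^{\mathcal{Z}_i},g^{\mathcal{Z}_j})=0=e(g^{\mathcal{Z}_j},g^{\mathcal{Z}_i})$ for $i\neq j$, so Definition-Theorem \ref{defthm2700} gives pairwise direct sums, which assemble by the additivity of Remark \ref{rem3200} and induction into $g^{\mathcal{Z}_1}\oplus\cdots\oplus g^{\mathcal{Z}_s}$, equal to $g^{\mathcal{Z}}$ by Lemma \ref{lem5200}. When the $\mathcal{Z}_i$ are the generically indecomposable summands of $\mathcal{Z}$ (Definition-Theorem \ref{defthm4800}), each $g^{\mathcal{Z}_i}$ is generically indecomposable — by Corollary \ref{cor5600} a general element of $\Hom_\Lambda(g^{\mathcal{Z}_i})$ is a minimal presentation of a general, hence indecomposable, module of $\mathcal{Z}_i$ — so uniqueness in Definition-Theorem \ref{defthm3400} identifies this as the generic decomposition of $g^{\mathcal{Z}}$. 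I expect the only delicate points to be the bookkeeping that a single general $N$ realizes all of $c,c_i,h_i,m_{ij}$ simultaneously (handled by intersecting finitely many dense opens via Lemma \ref{lem4900} and Lemma \ref{lem6050}) and the clean invocation of codimension additivity, on which the whole squeeze rests.
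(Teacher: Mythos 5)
Your argument is correct in substance, but it takes a genuinely different route from the paper for the core equivalence. The paper's proof of Theorem \ref{thm6200} is a one-line citation: the equivalence $(1)\Leftrightarrow(2)$ is taken wholesale from \cite[Theorem 1.3]{CILFS14}, and only the translation $(2)\Leftrightarrow(3)$ via Lemma \ref{lem6100} and the identity $g^{\mathcal{Z}}=\sum g^{\mathcal{Z}_i}$ via Lemma \ref{lem5200} are contributed by the paper itself. You instead re-derive the \cite{CILFS14} equivalence from scratch by the squeeze
$\sum_i c_i = c = \hom_\Lambda(N,\tau N) = \sum_i h_i + \sum_{i\neq j} m_{ij} \ge \sum_i c_i$,
which is essentially the original argument of Cerulli-Irelli--Labardini-Fragoso--Schr\"oer; what this buys is a self-contained proof, at the cost of having to establish the codimension additivity $\codim_{\mathcal{Z}}\mathcal{O}_N=\sum_i\codim_{\mathcal{Z}_i}\mathcal{O}_{N_i}$ yourself. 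Your handling of $(2)\Leftrightarrow(3)$ and of the final direct-sum statement (Lemma \ref{lem5800} to guarantee no negative summands, Lemma \ref{lem6100}, then Definition-Theorem \ref{defthm2700} with Remark \ref{rem3200} and Lemma \ref{lem5200}) coincides with what the paper intends, and you are in fact more careful than the paper in noting that one must pass to generically indecomposable $\mathcal{Z}_i$ before calling the result ``the generic decomposition.''

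One caveat: the codimension additivity is the load-bearing step of your squeeze and is only gestured at (``a generic-fibre count''). The fibre of $\GL_d(k)\times\prod_i\mathcal{Z}_i\to\mathcal{Z}$ over a general $N$ is a finite union of pieces indexed by redistributions of the Krull--Schmidt factors of $N$ among the slots, and one must check that no redistribution produces a larger-dimensional piece than the standard one; this is true but not immediate. Since this fact is exactly what is proved in \cite{CBSc02} and \cite[Section 5]{CILFS14}, you should either cite it precisely or carry out the fibre analysis after refining to generically indecomposable components. As written, your proof is a correct reconstruction of the cited result rather than a shortcut around it.
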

		\begin{proof}
			It is an immediate consequence of \cite[Theorem 1.3]{CILFS14}, and Lemma \ref{lem6100} together with Lemma \ref{lem5200}.
		\end{proof}
		\begin{corollary}\label{cor6300}
			For a g-vector $g$ with no negative direct summand, the following statements are equivalent.
			\begin{itemize}
				\item[$(1)$] $g=g_1\oplus g_2 \oplus\cdots\oplus g_s$ for some $g_i\in K_0(\proj\Lambda)$, $1\le i\le s$.
				\item[$(2)$] $\mathcal{Z}_g=\overline{\mathcal{Z}_{g_1}\oplus\mathcal{Z}_{g_2}\oplus\cdots\oplus \mathcal{Z}_{g_s}}.$
			\end{itemize}
			Moreover, a generically $\tau$-reduced component $\mathcal{Z}$ is generically indecomposable if and only if $g^{\mathcal{Z}}$ is generically indecomposable.
		\end{corollary}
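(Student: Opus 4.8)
The plan is to derive both assertions from the connecting Theorem \ref{thm6200}, using Theorem \ref{thm5500} and Lemma \ref{lem5800} to pass freely between a g-vector $h$ with no negative direct summand and its generically $\tau$-reduced component $\mathcal{Z}_h$, for which $g^{\mathcal{Z}_h}=h$. First I would dispose of a bookkeeping point: if $g$ has no negative direct summand, then neither does any of its summands. Indeed, each generically indecomposable g-vector in the generic decomposition (Definition-Theorem \ref{defthm3400}) of $g$ is either negative or non-negative, and it cannot be negative without being a negative direct summand of $g$; since any $g_i$ refines into such summands, each $g_i$ is non-negative. Consistently with the setting I therefore assume every $g_i$ has no negative direct summand, so that $g^{\mathcal{Z}_{g_i}}=g_i$ by Lemma \ref{lem5800} (in direction $(1)$ this is automatic).

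For $(1)\Rightarrow(2)$: from $g=g_1\oplus\cdots\oplus g_s$ and the additivity of the E-invariant (Remark \ref{rem3200}, together with Definition-Theorem \ref{defthm2700}) one gets $e(g_i,g_j)=0$ for all $i\neq j$, that is $e(g^{\mathcal{Z}_{g_i}},g^{\mathcal{Z}_{g_j}})=0$. Each $\mathcal{Z}_{g_i}$ is generically $\tau$-reduced by Theorem \ref{thm5500}, so condition $(3)$ of Theorem \ref{thm6200} holds and $\overline{\mathcal{Z}_{g_1}\oplus\cdots\oplus\mathcal{Z}_{g_s}}$ is a generically $\tau$-reduced component; its g-vector equals $g_1\oplus\cdots\oplus g_s=g=g^{\mathcal{Z}_g}$, so by injectivity of $g^{-}$ in Lemma \ref{lem5800} it coincides with $\mathcal{Z}_g$. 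For $(2)\Rightarrow(1)$: the component $\mathcal{Z}_g=\overline{\mathcal{Z}_{g_1}\oplus\cdots\oplus\mathcal{Z}_{g_s}}$ is generically $\tau$-reduced, so the final clause of Theorem \ref{thm6200} yields $g=g^{\mathcal{Z}_g}=g^{\mathcal{Z}_{g_1}}\oplus\cdots\oplus g^{\mathcal{Z}_{g_s}}=g_1\oplus\cdots\oplus g_s$.

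For the ``moreover'' I would prove the equivalence by contraposition in both directions, reading ``$\mathcal{Z}$ generically indecomposable'' as having a single summand in Definition-Theorem \ref{defthm4800} and ``$g^{\mathcal{Z}}$ generically indecomposable'' as having a single term in its generic decomposition (Definition \ref{def3300}). If $g^{\mathcal{Z}}$ is not generically indecomposable, write its generic decomposition $g^{\mathcal{Z}}=g_1\oplus\cdots\oplus g_s$ with $s\ge 2$; each $g_i$ is nonzero and non-negative, so $P^{g_i+}\neq 0$ and a general module of $\mathcal{Z}_{g_i}$ is nonzero by Lemma \ref{lem5800}. Applying the already-proved $(1)\Rightarrow(2)$ and then Lemma \ref{lem4900}, a general module of $\mathcal{Z}=\mathcal{Z}_{g^{\mathcal{Z}}}$ is a direct sum of $s\ge 2$ nonzero modules, hence decomposable, so $\mathcal{Z}$ is not generically indecomposable. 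Conversely, if $\mathcal{Z}$ is not generically indecomposable, write $\mathcal{Z}=\overline{\mathcal{Z}_1\oplus\cdots\oplus\mathcal{Z}_s}$ with $s\ge 2$ generically indecomposable components of positive dimension; by Theorem \ref{thm6200} each $\mathcal{Z}_i$ is generically $\tau$-reduced and $g^{\mathcal{Z}}=g^{\mathcal{Z}_1}\oplus\cdots\oplus g^{\mathcal{Z}_s}$, where each $g^{\mathcal{Z}_i}\neq 0$ because the zero g-vector corresponds under Lemma \ref{lem5800} to the zero-dimensional component. Refining these summands into their generic decompositions exhibits a generic decomposition of $g^{\mathcal{Z}}$ with at least two terms, so $g^{\mathcal{Z}}$ is not generically indecomposable.

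The main obstacle I anticipate is the careful treatment of negative direct summands: the assignment $g_i\mapsto\mathcal{Z}_{g_i}$ remembers $g_i$ only up to negative summands (Theorem \ref{thm5500}), so the equivalence is literally correct only after fixing the convention that the $g_i$ carry no negative direct summand. One must also check that a nonzero summand genuinely contributes a nonzero module (via $P^{g_i+}\neq 0$ together with minimality in Lemma \ref{lem5800}) and a nonzero g-vector (via the bijection $g^{-}$), so that the counts of indecomposable constituents on the module side and on the g-vector side match.
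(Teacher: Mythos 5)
Your proof is correct and follows essentially the same route as the paper, whose entire proof is the one-line citation ``It follows from Theorem \ref{thm6200} and the uniqueness of generic decompositions'': you derive both directions and the ``moreover'' clause from the connecting Theorem \ref{thm6200} together with the bijection $g^{-}$ of Lemma \ref{lem5800} and Theorem \ref{thm5500}. Your extra bookkeeping — checking that summands of a $g$ with no negative direct summand again have none, and that nonzero summands on the g-vector side match nonzero indecomposable constituents on the module side — is exactly the detail the paper leaves implicit, and your remark that the statement is literally correct only under the convention that the $g_i$ carry no negative direct summands is a fair and accurate caveat.
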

		\begin{proof}
			It follows from Theorem \ref{thm6200} and the uniqueness of generic decompositions.
		\end{proof}
		\begin{remark}\label{rem6400}
			Let $g$ be a g-vector and $g=g_1\oplus g_2 \oplus\cdots\oplus g_s$ a generic decomposition. Then
			$$d(g)=\sum_{i=1}^{s}d(g_i).$$
		\end{remark}
		Theorem \ref{thm6200} leads us to propose a generalization for \cite[Theorem 2.9]{GLFS23} or \cite[Theorem 6.1]{CILFS14}. Note that due to Lemma \ref{lem6100}, for a generically $\tau$-reduced component $\mathcal{Z}$, $\min\hom_{\Lambda}(\mathcal{Z},\tau\mathcal{Z})=0$ if and only if $g^{\mathcal{Z}}$ is tame.
		\begin{theorem}\label{thm6500}
			Let $g$ be an arbitrary g-vector.
			Then $|\mathcal{Z}_g|\le|\ind(g)|$ and the equality holds if and only if $g$ has no negative direct summand. In particular, if $g$ satisfies the non-decreasing condition, then $|\mathcal{Z}_g|\le|\Lambda|$.
		\end{theorem}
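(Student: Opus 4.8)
The plan is to prove the inequality $|\mathcal{Z}_g|\le|\ind(g)|$ (with its equality clause) for an \emph{arbitrary} $g$ first, and only at the very end feed in the linear independence supplied by Theorem~\ref{thm4700} to obtain the bound by $|\Lambda|=n$ under the non-decreasing condition. The whole argument is an assembly of Theorem~\ref{thm5500}, Corollary~\ref{cor6300}, Lemma~\ref{lem5800} and Theorem~\ref{thm4700}, with the one genuinely delicate point being an injectivity/counting step.

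First I would write the generic decomposition $g=g_1\oplus\cdots\oplus g_s$ and split it as $g=g'\oplus g''$, where $g'$ collects the non-negative summands and $g''$ the negative ones; so $\ind(g')\subseteq\ind(g)$ and $\ind(g')=\ind(g)$ exactly when $g$ has no negative direct summand. By the description of the fibres of $\mathcal{Z}_{-}$ in Theorem~\ref{thm5500}, forgetting the negative summands does not change the associated component, whence $\mathcal{Z}_g=\mathcal{Z}_{g'}$, and $g'$ has no negative direct summand. The only care needed here is the translation ``$g$ has no negative direct summand'' $\Longleftrightarrow$ ``no $g_i$ is negative'', which rests on the additivity of $P^{g\pm}$ along a generic decomposition (Remark~\ref{rem3500}); I would treat this as known structure.

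Next, working with the non-negative vector $g'=g'_1\oplus\cdots\oplus g'_t$, Corollary~\ref{cor6300} gives $\mathcal{Z}_{g'}=\overline{\mathcal{Z}_{g'_1}\oplus\cdots\oplus\mathcal{Z}_{g'_t}}$ with each $\mathcal{Z}_{g'_i}$ generically indecomposable, so this is \emph{the} decomposition of $\mathcal{Z}_{g'}$ into generically indecomposable components in the sense of \ref{defthm4800}. The step I expect to require the most care is that distinct components among the $\mathcal{Z}_{g'_i}$ correspond bijectively to distinct vectors among the $g'_i$: since each $g'_i$ is non-negative and generically indecomposable, Lemma~\ref{lem5800} makes $g^{-}$ a bijection on such vectors, so $\mathcal{Z}_{g'_i}=\mathcal{Z}_{g'_j}$ iff $g'_i=g'_j$. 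Hence $|\mathcal{Z}_g|=|\mathcal{Z}_{g'}|=|\{g'_1,\dots,g'_t\}|=|\ind(g')|\le|\ind(g)|$, and by the previous paragraph the equality $|\mathcal{Z}_g|=|\ind(g)|$ holds precisely when $g$ has no negative direct summand.

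Finally, for the distinguished consequence I would simply combine this with Theorem~\ref{thm4700}: if $g$ satisfies the non-decreasing condition then $\ind(g)$ is linearly independent, so $|\ind(g)|\le n=|\Lambda|$, and therefore $|\mathcal{Z}_g|\le|\ind(g)|\le|\Lambda|$. Everything outside the counting step of the third paragraph is routine chaining of the cited results; the linear independence, which is the deep input for the $|\Lambda|$ bound, is already available from Theorem~\ref{thm4700}.
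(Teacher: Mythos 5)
Your proposal is correct and follows essentially the same route as the paper, whose proof is just the one-line citation of Theorem~\ref{thm6200} and Theorem~\ref{thm4700}; your expansion via Corollary~\ref{cor6300}, Theorem~\ref{thm5500} and Lemma~\ref{lem5800} simply spells out the details (discarding negative summands, the injectivity of $\mathcal{Z}_{-}$ on non-negative generically indecomposable vectors, and the final linear-independence bound) that the paper leaves implicit.
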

		\begin{proof}
			It is a direct consequence of Theorem \ref{thm6200} and Theorem \ref{thm4700}.
		\end{proof}
		According to Remark \ref{rem3750}, we provide the following consequence which together with Theorem \ref{thm6200} proves Conjecture \ref{con5900}($2$), under the non-decreasing condition.
		\begin{corollary}\label{cor6650}
			Let $\mathcal{Z}$ be a generically $\tau$-reduced component. Then the following statements are equivalent.
			\begin{itemize}
				\item[$(1)$] $g^\mathcal{Z}\in\mathcal{C}^{\circ}(\mathrm{P})$ for a $2$-term silting complex $\mathrm{P}\in K^b(\proj\Lambda)$.
				\item[$(2)$] $|\mathcal{Z}|=|\Lambda|$ and $g^\mathcal{Z}$ satisfies the non-decreasing condition.
			\end{itemize}
		\end{corollary}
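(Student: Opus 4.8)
<br>

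The goal is to prove the equivalence of (1) and (2) for a generically $\tau$-reduced component $\mathcal{Z}$, where (1) says $g^{\mathcal{Z}}$ lies in the open cone $\mathcal{C}^{\circ}(\mathrm{P})$ of a $2$-term \emph{silting} complex, and (2) says $|\mathcal{Z}|=|\Lambda|=n$ together with the non-decreasing condition on $g^{\mathcal{Z}}$. The plan is to route everything through the generic decomposition of $g^{\mathcal{Z}}$ and the wall-and-chamber dictionary recorded in Remark \ref{rem3750}. By Corollary \ref{cor6300} the decomposition $\mathcal{Z}=\overline{\mathcal{Z}_1\oplus\cdots\oplus\mathcal{Z}_s}$ into generically indecomposable components corresponds exactly to the generic decomposition $g^{\mathcal{Z}}=g^{\mathcal{Z}_1}\oplus\cdots\oplus g^{\mathcal{Z}_s}$, so $|\mathcal{Z}|=|\ind(g^{\mathcal{Z}})|$ and the generically indecomposable summands of $\mathcal{Z}$ biject with the elements of $\ind(g^{\mathcal{Z}})$.

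\medskip

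For $(1)\Rightarrow(2)$: suppose $g^{\mathcal{Z}}\in\mathcal{C}^{\circ}(\mathrm{P})$ for a $2$-term silting $\mathrm{P}$. By Remark \ref{rem3750}, membership in such an open cone forces $g^{\mathcal{Z}}$ to be tame with $|\ind(g^{\mathcal{Z}})|=n$; tameness gives the non-decreasing condition by Proposition \ref{prop4450}, and $|\ind(g^{\mathcal{Z}})|=n$ gives $|\mathcal{Z}|=n=|\Lambda|$ via the bijection above. For $(2)\Rightarrow(1)$: assume $|\mathcal{Z}|=n$ and the non-decreasing condition. By Theorem \ref{thm4700}, the non-decreasing condition makes $\ind(g^{\mathcal{Z}})$ linearly independent; since $|\ind(g^{\mathcal{Z}})|=|\mathcal{Z}|=n$, the $n$ generically indecomposable summands form a basis of $K_0(\proj\Lambda)_{\mathbb{R}}\cong\mathbb{R}^n$. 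The task is then to recognize these $n$ summands as the vertices of the open cone $\mathcal{C}^{\circ}(\mathrm{P})$ of a genuine silting complex, i.e.\ to produce the $2$-term silting complex $\mathrm{P}$ whose generically indecomposable summands are precisely $\ind(g^{\mathcal{Z}})$.

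\medskip

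I would assemble $\mathrm{P}$ as the direct sum of the (indecomposable) presentations realizing the $g^{\mathcal{Z}_i}$ and verify it is silting using the wall-and-chamber decomposition $K_0(\proj\Lambda)_{\mathbb{Q}}\setminus\Walls=\coprod_{\mathrm{P}\in\twosilt\Lambda}\mathcal{C}^{\circ}(\mathrm{P})_{\mathbb{Q}}$ from Remark \ref{rem3750}. The point is that $g^{\mathcal{Z}}$ is a positive combination of $n$ linearly independent generically indecomposable g-vectors that are pairwise compatible (their pairwise $e$-invariants vanish by Corollary \ref{cor6300}, since $\mathcal{Z}$ is a genuine $\tau$-reduced component); a maximal compatible set of $n$ linearly independent generically indecomposable g-vectors is exactly the set of vertices of a chamber, hence of some $\mathrm{P}\in\twosilt\Lambda$, and $g^{\mathcal{Z}}$, being a strictly positive combination of all $n$ of them, lands in the open cone $\mathcal{C}^{\circ}(\mathrm{P})$.

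\medskip

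The main obstacle is this last recognition step in $(2)\Rightarrow(1)$: promoting ``$n$ linearly independent, pairwise-compatible generically indecomposable g-vectors summing to $g^{\mathcal{Z}}$'' into ``the full set of vertices of a silting chamber.'' A priori the wall-and-chamber picture only guarantees that a g-vector \emph{off} the walls sits in a unique $\mathcal{C}^{\circ}(\mathrm{P})$; here one must rule out that $g^{\mathcal{Z}}$ lies on a wall and must check that the compatible family is not merely contained in, but equal to, the vertex set of a single silting complex. The linear independence supplied by Theorem \ref{thm4700} is what forces maximality (a compatible family of size $n$ in rank $n$ cannot be properly enlarged and cannot collapse onto a lower-dimensional cone), so the argument hinges on combining that independence with the dimension count $|\mathcal{Z}|=n$ and the mutual compatibility to pin down the chamber uniquely; I would cite \cite[Theorem 2.27]{AiIy12} and \cite[Theorem 3.17]{As21} to convert the compatible basis into the silting complex $\mathrm{P}$ and to place $g^{\mathcal{Z}}$ in its open cone.
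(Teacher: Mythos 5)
Your direction $(1)\Rightarrow(2)$ is correct and is exactly what Remark \ref{rem3750} together with Proposition \ref{prop4450} and Theorems \ref{thm6200} and \ref{thm4700} gives. Note that the paper supplies no written proof of this corollary (it is stated as a consequence of Remark \ref{rem3750} and the preceding results, with the substantive input credited to \cite{As21} and to Asai), so the comparison below is against what the statement logically requires rather than against an explicit argument in the text.

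The direction $(2)\Rightarrow(1)$ as you present it has a genuine gap. Writing $g^{\mathcal{Z}}=g_1\oplus\cdots\oplus g_n$ for the generic decomposition, your hypotheses give you: pairwise compatibility $e(g_i,g_j)=0=e(g_j,g_i)$ for $i\neq j$ (Theorem \ref{thm6200}), linear independence of $\{g_1,\dots,g_n\}$ (Theorem \ref{thm4700}), and $g_i\notin D_{g_i}$ for every wild $g_i$. What you actually need, and never establish, is that each $g_i$ is \emph{rigid}, i.e.\ $e(g_i,g_i)=0$: if some $g_i$ were wild, then $g^{\mathcal{Z}}$ would be wild by Remark \ref{rem3200}, whereas membership in $\mathcal{C}^{\circ}(\mathrm{P})$ forces tameness by Remark \ref{rem3750}, so rigidity of every summand is a logical prerequisite for $(1)$, not a convenience. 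Pairwise compatibility for $i\neq j$ says nothing about $e(g_i,g_i)$, and the non-decreasing condition $g_i\notin D_{g_i}$ is weaker than tameness of $g_i$ (Remark \ref{rem4100} only gives the converse implication). Consequently your assembled complex $\mathrm{P}=\bigoplus a_i$ need not be presilting, and your assertion that ``a maximal compatible set of $n$ linearly independent generically indecomposable g-vectors is exactly the set of vertices of a chamber'' presupposes the rigidity you have not proved: the vertices of a chamber are g-vectors of indecomposable $2$-term presilting complexes, which are rigid by definition. Closing this gap --- showing that under $(2)$ no $g_i$ can be wild, equivalently that $g^{\mathcal{Z}}$ avoids $\Walls$ so that \cite[Theorem 3.17]{As21} applies --- is precisely where the external results must do real work, and it is the step your proposal leaves open.
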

	\section{Numerical criteria for tameness}
		We use the following lemma to obtain some criteria for tameness of g-vectors. Then we provide some examples and computations.
		\begin{lemma}[{\cite[Theorem 1.4]{AuRe85}}]\label{lem6700}
			For modules $M$ and $N$,
			\[\langle g^M,[N]\rangle=\hom_\Lambda(M,N)-\hom_{\Lambda}(N,\tau M)\]
		\end{lemma}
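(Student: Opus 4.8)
The plan is to compute both sides from a single fixed minimal projective presentation of $M$ and to identify the resulting discrepancy with $\tau M$ via the transpose functor.

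First I would fix a minimal projective presentation $P^{-1}\xrightarrow{a}P^0\rightarrow M\rightarrow 0$, so that $g^M=[P^0]-[P^{-1}]$ by Definition \ref{def2300}. The pairing satisfies $\langle[P],[N]\rangle=\hom_\Lambda(P,N)$ for a projective module $P$ — this is just the identity $\Hom_\Lambda(\Lambda e_i,N)\cong e_iN$ read off on the bases $\{[P_{(i)}]\}$ and $\{[S_{(i)}]\}$. Hence the left-hand side becomes
$$\langle g^M,[N]\rangle=\hom_\Lambda(P^0,N)-\hom_\Lambda(P^{-1},N).$$

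Next I would apply the left-exact contravariant functor $\Hom_\Lambda(-,N)$ to the presentation, obtaining the exact sequence
$$0\rightarrow\Hom_\Lambda(M,N)\rightarrow\Hom_\Lambda(P^0,N)\xrightarrow{a^*}\Hom_\Lambda(P^{-1},N).$$
A dimension count along this sequence yields
$$\hom_\Lambda(P^0,N)-\hom_\Lambda(P^{-1},N)=\hom_\Lambda(M,N)-\dim_k\Coker(a^*),$$
so the whole statement reduces to the single identity $\dim_k\Coker(a^*)=\hom_\Lambda(N,\tau M)$.

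Establishing this last identity is where the Auslander--Reiten translate genuinely enters, and I expect it to be the main obstacle. The plan is to use the natural isomorphism $\Hom_\Lambda(P,N)\cong\Hom_\Lambda(P,\Lambda)\otimes_\Lambda N$, valid for finitely generated projective $P$. Under it the map $a^*$ corresponds to $\Hom_\Lambda(a,\Lambda)\otimes_\Lambda N$, whose cokernel, by right-exactness of $-\otimes_\Lambda N$ and the definition of the transpose $\operatorname{Tr}M:=\Coker\bigl(\Hom_\Lambda(a,\Lambda)\bigr)$, is exactly $\operatorname{Tr}M\otimes_\Lambda N$. Finally, tensor--hom adjunction together with the $k$-duality $D=\Hom_k(-,k)$ gives
$$D(\operatorname{Tr}M\otimes_\Lambda N)\cong\Hom_\Lambda(N,D\operatorname{Tr}M)=\Hom_\Lambda(N,\tau M),$$
and taking $k$-dimensions (which $D$ preserves) produces $\dim_k\Coker(a^*)=\hom_\Lambda(N,\tau M)$. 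Combining the three displays completes the proof. The only points demanding care are tracking left-versus-right module structures through $\Hom_\Lambda(-,\Lambda)$ and $\operatorname{Tr}$, and insisting on minimality of the presentation, which guarantees that $\operatorname{Tr}M$ carries no projective summands and that $\tau M=D\operatorname{Tr}M$ is the honest Auslander--Reiten translate entering the formula.
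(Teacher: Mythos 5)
Your argument is correct, but note that the paper does not prove this lemma at all: it is stated as a quotation of \cite[Theorem 1.4]{AuRe85} (the Auslander--Reiten formula in its ``$\hom-\hom$'' form) and used as a black box. What you have written is essentially the standard proof from the literature: identify $\langle [P],[N]\rangle=\hom_\Lambda(P,N)$ on the projective basis, apply $\Hom_\Lambda(-,N)$ to a minimal presentation $P^{-1}\xrightarrow{a}P^0\to M\to 0$ to reduce everything to $\dim_k\Coker(a^*)=\hom_\Lambda(N,\tau M)$, and then compute $\Coker(a^*)\cong\operatorname{Tr}M\otimes_\Lambda N$ via the natural isomorphism $\Hom_\Lambda(P,N)\cong\Hom_\Lambda(P,\Lambda)\otimes_\Lambda N$ and dualize with tensor--hom adjunction. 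All three steps check out, and you correctly flag the two genuine pressure points: the left/right module bookkeeping through $\Hom_\Lambda(-,\Lambda)$, and minimality of the presentation, without which $\Coker(\Hom_\Lambda(a,\Lambda))$ differs from $\operatorname{Tr}M$ by a projective summand $P'$ whose dual $DP'$ is injective and would contribute a spurious $\hom_\Lambda(N,DP')$ term. So your proposal supplies a self-contained proof where the paper only supplies a citation; there is nothing to correct.
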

		\begin{theorem}\label{thm6800}
			Let $g$ and $h$ be g-vectors with no negative direct summand. Then
			$$e(g,h)=\min\hom_{\Lambda}(\mathcal{Z}_g,\mathcal{Z}_h)-\langle g,d(h) \rangle.$$
		\end{theorem}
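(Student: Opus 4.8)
The plan is to compute both sides by comparing the two Auslander--Reiten-type formulas available to us, namely Lemma~\ref{lem6100} and Lemma~\ref{lem6700}. The key observation is that Theorem~\ref{thm6800} differs from Lemma~\ref{lem6100} only by replacing $\tau\mathcal{Z}_g$ with $\mathcal{Z}_h$ in the homomorphism term and subtracting the pairing $\langle g,d(h)\rangle$. So the whole proof reduces to establishing the identity
\[
\min\hom_{\Lambda}(\mathcal{Z}_h,\tau\mathcal{Z}_g)=\min\hom_{\Lambda}(\mathcal{Z}_g,\mathcal{Z}_h)-\langle g,d(h)\rangle,
\]
since combining this with Lemma~\ref{lem6100} immediately yields the stated formula.

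First I would fix general modules $M\in\mathcal{Z}_g$ and $N\in\mathcal{Z}_h$ realizing the relevant minima simultaneously; this is possible because each function $\hom_{\Lambda}(-,\tau-)$, $\hom_{\Lambda}(-,-)$ is upper semi-continuous (Lemma~\ref{lem300}, Lemma~\ref{lem6050}) and the corresponding generic loci are open dense, so a general pair $(N,M)$ lies in all of them at once. Since $g$ has no negative direct summand, by Lemma~\ref{lem5800} a general $M\in\mathcal{Z}_g$ satisfies $g^M=g$ and similarly $[N]=d(h)$ for a general $N\in\mathcal{Z}_h$. Then Lemma~\ref{lem6700} applied to the pair $(M,N)$ gives
\[
\langle g,d(h)\rangle=\langle g^M,[N]\rangle=\hom_{\Lambda}(M,N)-\hom_{\Lambda}(N,\tau M).
\]
Rearranging this reads $\hom_{\Lambda}(N,\tau M)=\hom_{\Lambda}(M,N)-\langle g,d(h)\rangle$.

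The main obstacle is verifying that the genericity can be arranged so that a single general pair $(N,M)$ attains \emph{all} three minima --- $\min\hom_{\Lambda}(\mathcal{Z}_h,\tau\mathcal{Z}_g)$, $\min\hom_{\Lambda}(\mathcal{Z}_g,\mathcal{Z}_h)$, and the generic value of $\langle g^M,[N]\rangle$ --- rather than each minimum being attained on a different, possibly disjoint, locus. This is where the intersection-of-open-dense-sets argument in irreducible varieties does the work: $\mathcal{Z}_g$ and $\mathcal{Z}_h$ are irreducible (being irreducible components), hence $\mathcal{Z}_g\times\mathcal{Z}_h$ is irreducible, and a finite intersection of non-empty open dense subsets is again non-empty open dense. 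Once a pair in this common locus is chosen, the minima in the two $\hom$ functions are literally the values $\hom_{\Lambda}(M,N)$ and $\hom_{\Lambda}(N,\tau M)$, and the displayed rearrangement of Lemma~\ref{lem6700} identifies them with the required relation. Finally, substituting into Lemma~\ref{lem6100},
\[
e(g,h)=\min\hom_{\Lambda}(\mathcal{Z}_h,\tau\mathcal{Z}_g)=\hom_{\Lambda}(N,\tau M)=\hom_{\Lambda}(M,N)-\langle g,d(h)\rangle=\min\hom_{\Lambda}(\mathcal{Z}_g,\mathcal{Z}_h)-\langle g,d(h)\rangle,
\]
which is exactly the claim.
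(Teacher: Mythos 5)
Your proof is correct and takes essentially the same route as the paper: the paper's own one-line proof invokes Lemma~\ref{lem6100} together with upper semi-continuity, with the Auslander--Reiten formula of Lemma~\ref{lem6700} supplying the identity $\langle g,d(h)\rangle=\hom_{\Lambda}(M,N)-\hom_{\Lambda}(N,\tau M)$ exactly as you use it. Your write-up merely makes explicit the intersection-of-open-dense-loci argument that the paper leaves implicit.
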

		\begin{proof}
			It follows from Lemma \ref{lem6100} and upper semi-continuity of the functions $e(-,?)$, $\hom_{\Lambda}(?,\tau-)$ and $\hom_{\Lambda}(-,?)$.
		\end{proof}
		\begin{theorem}\label{thm7000}
			Consider a g-vector $g$ with no negative direct summand. Then the following statements hold.
			\begin{itemize}
				\item[$(1)$] $g$ is wild if and only if
				$\min\hom_{\Lambda}(\mathcal{Z}_g,\mathcal{Z}_g)>\langle g,d(g)\rangle.$
				\item[$(2)$] $g$ is tame if and only if
				$\langle g,d(g) \rangle=\min\hom_{\Lambda}(\mathcal{Z}_g,\mathcal{Z}_g).$
			\end{itemize}
		\end{theorem}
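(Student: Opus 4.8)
The plan is to reduce both claims to the single identity furnished by Theorem \ref{thm6800}, namely
$$e(g,g)=\min\hom_{\Lambda}(\mathcal{Z}_g,\mathcal{Z}_g)-\langle g,d(g)\rangle,$$
obtained by setting $h=g$ there. This substitution is legitimate precisely because $g$ is assumed to have no negative direct summand, which is the standing hypothesis. The remaining ingredient is the characterization of tameness through the E-invariant, which I would extract from the earlier material.

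First I would recall that, by Definition \ref{def2800} together with the Direct sums of g-vectors theorem \ref{defthm2700} applied with $h=g$, the defining condition $2g=g\oplus g$ of tameness is exactly the statement $e(g,g)=0=e(g,g)$, i.e.\ $e(g,g)=0$. Hence $g$ is tame if and only if $e(g,g)=0$. Next, since $e(a,b)=\hom_{K^b(\proj\Lambda)}(a,b[1])$ is by Definition \ref{def2600} the dimension of a $\Hom$-space, each $e(a,b)$ is nonnegative, and taking the minimum over $\Hom_\Lambda(g)\times\Hom_\Lambda(g)$ yields $e(g,g)\ge 0$. Consequently $g$ is wild, i.e.\ not tame, if and only if $e(g,g)>0$.

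Finally I would substitute the displayed identity into these two equivalences. The equation $e(g,g)=0$ translates into $\langle g,d(g)\rangle=\min\hom_\Lambda(\mathcal{Z}_g,\mathcal{Z}_g)$, giving statement $(2)$; the strict inequality $e(g,g)>0$ translates into $\min\hom_\Lambda(\mathcal{Z}_g,\mathcal{Z}_g)>\langle g,d(g)\rangle$, giving statement $(1)$. Note that these two cases are genuinely complementary: the nonnegativity $e(g,g)\ge 0$ forces $\min\hom_\Lambda(\mathcal{Z}_g,\mathcal{Z}_g)\ge\langle g,d(g)\rangle$ always, so the dichotomy ``$>$'' versus ``$=$'' exactly mirrors the dichotomy wild versus tame.

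There is no genuine obstacle at this stage once Theorem \ref{thm6800} is available; the entire analytic content has already been absorbed into that theorem and into Lemma \ref{lem6100} underlying it, where minimal projective presentations and upper semi-continuity are used to equate the E-invariant with homomorphism numbers of the associated generically $\tau$-reduced components. The present statement is therefore a bookkeeping translation, and the only points requiring care are verifying that the hypothesis ``no negative direct summand'' is what licenses the use of Theorem \ref{thm6800}, and that the nonnegativity of $e(g,g)$ is invoked to guarantee the complementarity of the two numerical conditions.
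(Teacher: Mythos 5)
Your proposal is correct and follows exactly the paper's route: the paper proves Theorem \ref{thm7000} as a direct consequence of Theorem \ref{thm6800} with $h=g$, using that tameness of $g$ is equivalent to $e(g,g)=0$ and that $e(g,g)\ge 0$. Your write-up just makes explicit the bookkeeping the paper leaves implicit.
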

		\begin{proof}
			It is a direct consequence of Theorem \ref{thm6800}.
		\end{proof}
		\begin{corollary}\label{cor7200}
			By the above notation, $\langle g,d(g) \rangle\ge 0$, if $g$ is tame. In particular, if $\Lambda$ is E-tame, then $\langle g,d(g) \rangle\ge 0$, for all g-vectors $g$.
		\end{corollary}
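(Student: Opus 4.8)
The plan is to read the inequality directly off the tameness criterion of Theorem \ref{thm7000}(2), and then to propagate it to all g-vectors in the E-tame case by stripping off the negative direct summands.

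First I would treat a tame g-vector $g$ with no negative direct summand, which is the setting inherited from ``the above notation''. By Theorem \ref{thm7000}(2), tameness of $g$ is equivalent to $\langle g,d(g)\rangle=\min\hom_{\Lambda}(\mathcal{Z}_g,\mathcal{Z}_g)$. The right-hand side is a minimum of integers of the form $\hom_{\Lambda}(X,X)=\dim_k\End_{\Lambda}(X)\ge 0$, hence is itself non-negative, and this already gives $\langle g,d(g)\rangle\ge 0$. Equivalently one may invoke Theorem \ref{thm6800} with $h=g$: since $e(g,g)=0$ for tame $g$, the formula $e(g,g)=\min\hom_{\Lambda}(\mathcal{Z}_g,\mathcal{Z}_g)-\langle g,d(g)\rangle$ forces $\langle g,d(g)\rangle=\min\hom_{\Lambda}(\mathcal{Z}_g,\mathcal{Z}_g)\ge 0$.

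Next I would remove the hypothesis on negative summands, which is what the ``in particular'' clause requires, since E-tameness means every g-vector is tame. Given an arbitrary tame g-vector $g$, I would take its generic decomposition $g=g_1\oplus\cdots\oplus g_s$ and regroup it as $g=g^{nn}\oplus g^{neg}$, where $g^{nn}$ collects the non-negative summands and $g^{neg}$ the negative ones. Each negative summand $g_i=-[Q_i]$ has $\Hom_{\Lambda}(g_i)=0$, so its general cokernel is zero and $d(g_i)=0$; by Remark \ref{rem6400} this yields $d(g)=d(g^{nn})$. Moreover $g^{nn}$ inherits tameness from $g$ and, being a sum of non-negative summands, has no negative direct summand, so the first step applies to it.

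Finally I would show that the negative part pairs trivially against $d(g)$. Writing $g^{neg}=-[P^{g^{neg}-}]$ and using bilinearity, $\langle g,d(g)\rangle=\langle g^{nn},d(g^{nn})\rangle+\langle g^{neg},d(g^{nn})\rangle$, where $\langle g^{neg},d(g^{nn})\rangle=-\hom_{\Lambda}(P^{g^{neg}-},M)$ for a general $M\in\mathcal{Z}_{g^{nn}}$ (Corollary \ref{cor5600} identifies such $M$ with a general cokernel). The step I expect to require the most care is to identify this quantity with the cross E-invariant through the $K^b(\proj\Lambda)$-computation $e(g^{neg},g^{nn})=\hom_{\Lambda}(P^{g^{neg}-},M)$, which follows from the projectivity of $P^{g^{neg}-}$ applied to the presentation $P^{g^{nn}-}\to P^{g^{nn}+}\to M\to 0$. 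Since all cross terms $e(g_i,g_j)$ between distinct generic summands vanish (by the characterization of direct sums, \ref{defthm2700}), additivity of $e$ (Remark \ref{rem3200}) gives $e(g^{neg},g^{nn})=0$, whence $\langle g^{neg},d(g^{nn})\rangle=0$. Therefore $\langle g,d(g)\rangle=\langle g^{nn},d(g^{nn})\rangle\ge 0$, and the E-tame statement is then immediate because every g-vector is tame.
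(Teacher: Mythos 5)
Your first paragraph is exactly the paper's (unwritten) argument: Corollary \ref{cor7200} is stated with no proof as an immediate consequence of Theorem \ref{thm7000}(2) (equivalently Theorem \ref{thm6800} with $e(g,g)=0$), since $\min\hom_{\Lambda}(\mathcal{Z}_g,\mathcal{Z}_g)\ge 0$. Your further reduction to the non-negative part --- using $d(g)=d(g^{nn})$ from Remark \ref{rem6400} and the vanishing $\langle g^{neg},d(g^{nn})\rangle=-\hom_{\Lambda}(P^{g^{neg}-},M)=-e(g^{neg},g^{nn})=0$ forced by the direct-sum characterization --- is correct and usefully covers g-vectors with negative direct summands, a case the paper leaves implicit under ``the above notation.''
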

		\begin{corollary}[{\cite[Corollary 5.14]{CILFS14}}]\label{cor7300}
			Let $d$ be a dimension vector and $\mathcal{Z}\subseteq\rep_d(\Lambda)$ be a generically $\tau$-reduced component. If $\langle g^{\mathcal{Z}},d \rangle< 0$, then
			$$\min\{\hom_{\Lambda}(M,\tau M)\mid M\in\mathcal{Z}\}>0.$$
		\end{corollary}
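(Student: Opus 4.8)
The plan is to prove the contrapositive. Since $\hom_{\Lambda}(M,\tau M)$ is the dimension of a $\Hom$-space, the quantity $\min\{\hom_{\Lambda}(M,\tau M)\mid M\in\mathcal{Z}\}$ is a non-negative integer, so it fails to be positive precisely when it equals $0$. Thus it suffices to show that $\min\{\hom_{\Lambda}(M,\tau M)\mid M\in\mathcal{Z}\}=0$ forces $\langle g^{\mathcal{Z}},d\rangle\ge 0$, contradicting the hypothesis $\langle g^{\mathcal{Z}},d\rangle<0$. The entire argument rests on specializing the Auslander--Reiten formula of Lemma \ref{lem6700} to the diagonal $N=M$, which reads $\langle g^{M},[M]\rangle=\hom_{\Lambda}(M,M)-\hom_{\Lambda}(M,\tau M)$; the left-hand side is exactly $\langle g^{\mathcal{Z}},d\rangle$ for a general module, and the correction term $\hom_{\Lambda}(M,\tau M)$ is precisely what we are assuming can be made zero.

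First I would set up the genericity. By Lemma \ref{lem6050} the diagonal function $\hom_{\Lambda}(-,\tau-)\colon\rep_d(\Lambda)\to\mathbb{Z}$ is upper semi-continuous, so by Remark \ref{rem400} and Definition \ref{def500} its minimum over $\mathcal{Z}$ is attained on a dense open subset of $\mathcal{Z}$; under our assumption this is the locus where $\hom_{\Lambda}(M,\tau M)=0$. By Definition \ref{def5100} there is likewise a dense open subset of $\mathcal{Z}$ on which $g^{M}=g^{\mathcal{Z}}$. Because $\mathcal{Z}$ is an irreducible component, any two dense open subsets meet, so I can choose a single module $M\in\mathcal{Z}$ that simultaneously satisfies $g^{M}=g^{\mathcal{Z}}$, $[M]=d$ (automatic since $M\in\rep_d(\Lambda)$), and $\hom_{\Lambda}(M,\tau M)=0$.

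For such an $M$, Lemma \ref{lem6700} with $N=M$ gives
\[
\langle g^{\mathcal{Z}},d\rangle=\langle g^{M},[M]\rangle=\hom_{\Lambda}(M,M)-\hom_{\Lambda}(M,\tau M)=\hom_{\Lambda}(M,M)\ge 0,
\]
which contradicts $\langle g^{\mathcal{Z}},d\rangle<0$. Hence the assumption $\min\{\hom_{\Lambda}(M,\tau M)\mid M\in\mathcal{Z}\}=0$ is untenable and the minimum must be strictly positive, as claimed. I do not expect a genuine obstacle here: once the Auslander--Reiten form is restricted to the diagonal the statement is essentially immediate, and the only point requiring care is that the two relevant dense open loci (where the $\mathbf{g}$-vector is generic and where the $\tau$-$\hom$ minimum is attained) be intersected inside the \emph{irreducible} component $\mathcal{Z}$ so that a common witness $M$ exists.
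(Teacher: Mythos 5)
Your proof is correct, but it takes a genuinely different and more elementary route than the paper's. The paper deduces the corollary from Theorem \ref{thm7000}: since $\min\hom_{\Lambda}(\mathcal{Z}_g,\mathcal{Z}_g)\ge 0>\langle g^{\mathcal{Z}},d\rangle$, the g-vector $g^{\mathcal{Z}}$ is wild, hence $\min\hom_{\Lambda}(\mathcal{Z},\tau\mathcal{Z})=e(g^{\mathcal{Z}},g^{\mathcal{Z}})>0$, and the diagonal minimum is bounded below by this two-variable minimum. That chain rests on the identification $\mathcal{Z}=\mathcal{Z}_{g^{\mathcal{Z}}}$ (which is where the generically $\tau$-reduced hypothesis enters) and ultimately on the key Lemma \ref{lem6100} relating $e(g,h)$ to $\min\hom_{\Lambda}(\mathcal{Z}_h,\tau\mathcal{Z}_g)$. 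You instead specialize the Auslander--Reiten formula of Lemma \ref{lem6700} to the diagonal for a single general module $M$ lying in the intersection of the two dense open loci, and read off $\langle g^{\mathcal{Z}},d\rangle=\hom_{\Lambda}(M,M)\ge 0$, a contradiction. Your argument never uses that $\mathcal{Z}$ is generically $\tau$-reduced, so it proves the statement for an arbitrary irreducible component, and it avoids the machinery of Sections 3--4 entirely. What the paper's derivation buys in exchange is strictly more information: it shows that $g^{\mathcal{Z}}$ is wild and that the stronger two-variable minimum $\min\hom_{\Lambda}(\mathcal{Z},\tau\mathcal{Z})$ is already positive, not merely the diagonal one. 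Both proofs are sound; yours is the shorter self-contained path to the stated conclusion.
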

		\begin{proof}
			Theorem \ref{thm7000} implies that $g^{\mathcal{Z}}$ is wild. Thus $\min\hom_{\Lambda}(\mathcal{Z},\tau\mathcal{Z})=e(g^{\mathcal{Z}},g^{\mathcal{Z}})>0$. Now, the statement follows from the following inequality.
			$$\min\hom_{\Lambda}(\mathcal{Z},\tau\mathcal{Z})\le\min\{\hom_{\Lambda}(M,\tau M)\mid M\in\mathcal{Z}\}.$$
		\end{proof}
		\begin{example}\label{ex7350}
			Consider an $m$-Kronecker’s quiver, for $m\ge 3$.
			\begin{center}
				\begin{tikzcd}
					1 & 2
					\arrow[shift right=2, from=1-1, to=1-2]
					\arrow[shift left=3, from=1-1, to=1-2]
					\arrow["\vdots"{marking, allow upside down}, draw=none, from=1-1, to=1-2]
				\end{tikzcd}
			\end{center}
			We have $S_{(2)}\cong P_{(2)}$ and $[P_{(1)}]=(1,m)$. Consider the g-vector $g=[P_{(1)}]-[P_{(2)}]=(1,-1)$. It is easy to compute $d(g)=[P_{(1)}]-[S_{(2)}]=(1,m-1)$. So $\langle g,d(g) \rangle=2-m<0$. Thus by Corollary \ref{cor7200}, $(1,-1)$ is a wild g-vector.
		\end{example}
		A more general statement also holds. Consider projective modules $P_0$ and $P_1$ with no non-zero direct summand in common. Assume that there is a monomorphism in $\Hom_{\Lambda}(P_1,P_0)$. Then by upper semi-continuity of the function
		$$\dim_k\Ker:\Hom_{\Lambda}(P_1,P_0)\rightarrow\mathbb{Z},$$
		the set of monomorphisms from $P_1$ to $P_0$ are open and dense. So $g=[P_0]-[P_1]$ does not admit a negative direct summand. Thus a general module in $\mathcal{Z}_g$ is of projective dimension $1$. Therefore, $d(g)=[P_0]-[P_1]\in K_0(\mod\Lambda)$.\\
		Now, assume that $P_0=P_{(1)}^{a_1}\oplus\cdots\oplus P_{(n)}^{a_n}$ and $P_1=P_{(1)}^{b_1}\oplus\cdots\oplus P_{(n)}^{b_n}$. By definition, $g=(a_1-b_1,\cdots,a_n-b_n)$. Moreover,
		we know that
		$$[P_{(i)}]=(\dim_k e_1\Lambda e_i,\cdots,\dim_k e_n\Lambda e_i),$$
		where $\{e_1,\cdots,e_n\}$ is the complete set of primitive
		orthogonal idempotents corresponding to $\{P_{(1)},\cdots,P_{(n)}\}$. So
		$$d(g)=(\sum_{i=1}^{n}(a_i-b_i)\dim_k e_1\Lambda e_i,\cdots,\sum_{i=1}^{n}(a_i-b_i)\dim_k e_n\Lambda e_i).$$
		Therefore,
		$$\langle g,d(g) \rangle=\sum_{1\le i,j \le n}(a_i-b_i)(a_j-b_j)\dim_k e_j\Lambda e_i.$$
		Note that we had assumed that $P_1$ and $P_0$ do not share any non-zero direct summands. So in the above formula, if $a_i\neq 0$, then $b_i=0$.
		Consider $A=\{1\le i \le n\mid a_i\neq 0\}$ and $B=\{1\le j \le n\mid b_j\neq 0\}$. Thus
		$$\scalemath{0.8}{\langle g,d(g) \rangle=\sum_{i,i'\in A}a_ia_{i'}\dim_k e_i\Lambda e_{i'}+\sum_{j,j'\in B}b_jb_{j'}\dim_k e_j\Lambda e_{j'}-\sum_{i\in A, j\in B}a_ib_j\dim_k (e_i\Lambda e_{j}+e_j\Lambda e_i).}$$
		Moreover, in this case, $d(tg)=td(g)$, for all $t\in\mathbb{N}$. Therefore, if $\langle g,d(g) \rangle$ is negative, then $tg$ is a wild g-vector, for all $t\in\mathbb{N}$.
		\begin{remark}\label{rem7360}
			Consider a set $A$ consisting of some vertices of $Q$ and a set $B$ consisting of some sink vertices. For $j\in B$, let
			$$A_j=\{i\in A\mid\textit{there is a path from $i$ to $j$}\}.$$
			Assume that for all $j\in B$, $A_j\neq\emptyset$ and
			$$|\bigcup_{j\in B}A_j|>|B|.$$
			Then there is a monomorphism from $\bigoplus_{j\in B}P_{(j)}$ to $\bigoplus_{i\in A}P_{(i)}$.
			Hence, for $g=\sum_{i\in A}[P_{(i)}]-\sum_{j\in B}[P_{(j)}]$,
			$$\langle g,d(g) \rangle=\sum_{i,i'\in A}\dim_k e_i\Lambda e_{i'}-\sum_{i\in A, j\in B}\dim_k e_j\Lambda e_{i}+|B|.$$
		\end{remark}
		\begin{example}
			For each of the following quivers, let $g=[P_{(i)}]-[P_{(j)}]$. Then $\langle g,d(g) \rangle<0$ and thus it is a wild g-vectors.
			\begin{center}
			\begin{tikzcd}
				& \bullet &&& \bullet \\
				j & {} & i & j && i & j & \bullet & i \\
				& \circ
				\arrow[from=1-5, to=2-4]
				\arrow[shift left=2, from=2-6, to=2-4]
				\arrow[from=2-6, to=1-5]
				\arrow[shift right, from=2-6, to=2-4]
				\arrow[from=2-3, to=1-2]
				\arrow[from=1-2, to=2-1]
				\arrow[from=2-3, to=3-2]
				\arrow[from=3-2, to=2-1]
				\arrow[from=2-3, to=2-1]
				\arrow[shift left=2, from=2-9, to=2-8]
				\arrow[shift left=2, from=2-8, to=2-7]
				\arrow[shift right=2, from=2-9, to=2-8]
				\arrow[shift right=2, from=2-8, to=2-7]
			\end{tikzcd}
			\end{center}
			The same also holds for each of the following bound quivers.
			\begin{center}
			\begin{tikzcd}
					& \bullet \\
					j && i & {\scalemath{0.9}{I:(\alpha\gamma_1\beta)^2=(\alpha\gamma_2\beta)^2=0, \gamma_1\beta\alpha\gamma_2=\gamma_1\beta\alpha\gamma_1=\gamma_2\beta\alpha\gamma_1}} \\
					& \circ \\
					& {} & \bullet \\
					j & i && {I:(\lambda\alpha\delta)^3=0} \\
					&& \circ
					\arrow["\lambda"', from=1-2, to=2-1]
					\arrow["{\gamma_1}"', shift right, from=3-2, to=1-2]
					\arrow["\beta"', from=2-3, to=3-2]
					\arrow["\delta"', from=3-2, to=2-1]
					\arrow["\alpha", from=1-2, to=2-3]
					\arrow["{\gamma_2}", shift left, from=3-2, to=1-2]
					\arrow["\alpha", from=5-2, to=4-3]
					\arrow["\lambda", from=4-3, to=6-3]
					\arrow["\delta", from=6-3, to=5-2]
					\arrow["\gamma", shift left, from=5-2, to=5-1]
					\arrow["\beta"', shift right, from=5-2, to=5-1]
			\end{tikzcd}
			\end{center}
		\end{example}
		One may also check the tamness of a g-vector by using the dimension of corresponding generically $\tau$-reduced component.
		\begin{lemma}\label{lem7400}
			Let $g$ be an arbitrary g-vector. Then
			$$\dim\mathcal{Z}_g=\langle d(g),d(g) \rangle -\langle g,d(g) \rangle.$$
		\end{lemma}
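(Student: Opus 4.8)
The plan is to read off $\dim\mathcal{Z}_g$ from the invariants of a single general module $M\in\mathcal{Z}_g$. Set $d=d(g)$ and pick a general $M\in\mathcal{Z}_g$; then $[M]=d$ and $g^{M}=g^{\mathcal{Z}_g}$ by Definition \ref{def5100}, and $M$ attains the generic value of every upper semi-continuous invariant on $\mathcal{Z}_g$. Since $\mathcal{Z}_g$ is irreducible, its dimension splits as $\dim\mathcal{Z}_g=\dim\mathcal{O}_M+\codim_{\mathcal{Z}_g}\mathcal{O}_M$, and I would compute the two terms separately.

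For the orbit term, the computation in \ref{prop1900} gives $\dim\mathcal{O}_M=\dim\GL_d(k)-\ndo_{\Lambda}(M)=\langle d,d\rangle-\ndo_{\Lambda}(M)$. For the codimension term, generic $\tau$-reducedness of $\mathcal{Z}_g$ (Theorem \ref{thm5500}(2) together with Definition \ref{def5400}) yields $\codim_{\mathcal{Z}_g}\mathcal{O}_M=\hom_{\Lambda}(M,\tau M)$. Adding these,
$$\dim\mathcal{Z}_g=\langle d,d\rangle-\big(\ndo_{\Lambda}(M)-\hom_{\Lambda}(M,\tau M)\big).$$
Now the Auslander--Reiten formula (Lemma \ref{lem6700}) with $N=M$ gives
$$\langle g^{\mathcal{Z}_g},d\rangle=\langle g^{M},[M]\rangle=\hom_{\Lambda}(M,M)-\hom_{\Lambda}(M,\tau M)=\ndo_{\Lambda}(M)-\hom_{\Lambda}(M,\tau M),$$
so that $\dim\mathcal{Z}_g=\langle d,d\rangle-\langle g^{\mathcal{Z}_g},d\rangle$.

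The one place that needs care --- and what I expect to be the main obstacle --- is that the target formula involves $g$, whereas the computation naturally produces $g^{\mathcal{Z}_g}$, namely $g$ with its negative direct summands deleted. I would close this gap by proving $\langle g,d(g)\rangle=\langle g^{\mathcal{Z}_g},d(g)\rangle$. Collecting the negative summands of the generic decomposition of $g$ into $-[P]$ for a projective $P$, we have $g=g^{\mathcal{Z}_g}\oplus(-[P])$ as g-vectors (\ref{defthm2700}), and since negative summands contribute $0$ to the dimension vector, $d(g)=d(g^{\mathcal{Z}_g})$ by Remark \ref{rem6400}. Being a summand, $-[P]$ satisfies $e(-[P],g^{\mathcal{Z}_g})=0$; unwinding Definition \ref{def2600} for the negative g-vector $-[P]$ (whose only presentation is the complex $P[1]$) and using exactness of $\Hom_{\Lambda}(P,-)$ identifies $e(-[P],g^{\mathcal{Z}_g})$ with $\hom_{\Lambda}(P,N)$ for a general $N\in\mathcal{Z}_{g^{\mathcal{Z}_g}}=\mathcal{Z}_g$. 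Hence $\Hom_{\Lambda}(P,N)=0$ generically. Applying Lemma \ref{lem6700} to the projective module $P$ (so $\tau P=0$ and $g^{P}=[P]$) shows $\langle[P],d(g)\rangle=\hom_{\Lambda}(P,N)=0$ for general $N$, i.e. $\langle -[P],d(g)\rangle=0$; therefore $\langle g,d(g)\rangle=\langle g^{\mathcal{Z}_g},d(g)\rangle$, which completes the identity.
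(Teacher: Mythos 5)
Your proof follows the same route as the paper's: split $\dim\mathcal{Z}_g=\dim\mathcal{O}_M+\codim_{\mathcal{Z}_g}\mathcal{O}_M$ for a general $M$, compute $\dim\mathcal{O}_M=\langle d(g),d(g)\rangle-\ndo_\Lambda(M)$ via the orbit--stabilizer formula, use generic $\tau$-reducedness for the codimension, and finish with Lemma \ref{lem6700}. Your final paragraph reconciling $\langle g,d(g)\rangle$ with $\langle g^{\mathcal{Z}_g},d(g)\rangle$ is a correct treatment of a point the paper's own proof silently elides --- it invokes Lemma \ref{lem6700} as if $g^M=g$ for general $M\in\mathcal{Z}_g$, which is literally true only when $g$ has no negative direct summand, whereas the lemma is stated for arbitrary $g$ --- so your version is the more complete one.
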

		\begin{proof}
			By Lemma \ref{lem6700}, for a general $M\in\mathcal{Z}_g$, we have
			$$\langle g,d(g) \rangle =\ndo_{\Lambda}(M)-\hom_{\Lambda}(M,\tau M).$$
			Since $\mathcal{Z}_g$ is generically $\tau$-reduced, $\hom_{\Lambda}(M,\tau M)=\codim_{\mathcal{Z}_g}\mathcal{O}_M$. So
			$$\langle g,d(g) \rangle =\ndo_{\Lambda}(M)-\codim_{\mathcal{Z}_g}\mathcal{O}_M=\ndo_{\Lambda}(M)-\dim\mathcal{Z}_g+\dim\mathcal{O}_M=$$
			$$\ndo_{\Lambda}(M)-\dim\mathcal{Z}_g+\dim\GL_{d(g)}(k)-\ndo_{\Lambda}(M).$$
			Therefore, $\langle g,d(g) \rangle=\langle d(g),d(g) \rangle-\dim\mathcal{Z}_g$.
		\end{proof}
		\begin{remark}\label{rem7500}
			For an arbitrary g-vector $g$, $\langle d(g),d(g) \rangle\ge\langle g,d(g) \rangle$.
		\end{remark}
		\begin{corollary}\label{cor7600}
			Let $g$ be an arbitrary g-vector with no negative direct summand. Then
			$$e(g,g)+\langle d(g),d(g) \rangle=\dim\mathcal{Z}_g+\min\hom_{\Lambda}(\mathcal{Z}_g,\mathcal{Z}_g).$$
		\end{corollary}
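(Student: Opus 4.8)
The plan is to observe that this corollary is a purely formal consequence of the two immediately preceding results, Theorem \ref{thm6800} and Lemma \ref{lem7400}, obtained by eliminating the common quantity $\langle g,d(g)\rangle$ between them. Both of these results are available since $g$ is assumed to have no negative direct summand, so I would not need to reprove any geometric input; the entire argument is a substitution.

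First I would specialize Theorem \ref{thm6800} to the case $h=g$. Since $g$ has no negative direct summand, the hypotheses apply with both arguments equal to $g$, and the theorem yields
\[
e(g,g)=\min\hom_{\Lambda}(\mathcal{Z}_g,\mathcal{Z}_g)-\langle g,d(g)\rangle,
\]
equivalently $e(g,g)+\langle g,d(g)\rangle=\min\hom_{\Lambda}(\mathcal{Z}_g,\mathcal{Z}_g)$. Next I would invoke Lemma \ref{lem7400}, which gives $\dim\mathcal{Z}_g=\langle d(g),d(g)\rangle-\langle g,d(g)\rangle$, and rewrite it as $\langle g,d(g)\rangle=\langle d(g),d(g)\rangle-\dim\mathcal{Z}_g$.

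Substituting this expression for $\langle g,d(g)\rangle$ into the first displayed identity produces
\[
e(g,g)+\langle d(g),d(g)\rangle-\dim\mathcal{Z}_g=\min\hom_{\Lambda}(\mathcal{Z}_g,\mathcal{Z}_g),
\]
and transposing $\dim\mathcal{Z}_g$ to the right-hand side gives exactly the claimed equality. There is essentially no obstacle here: all the substantive content (the comparison of the $E$-invariant with $\min\hom$ via the Auslander--Reiten formula, and the dimension formula for generically $\tau$-reduced components) is already packaged in Theorem \ref{thm6800} and Lemma \ref{lem7400}. The only point requiring any care is confirming that the no-negative-summand hypothesis on $g$ is exactly what licenses applying Theorem \ref{thm6800} with $h=g$, which it is.
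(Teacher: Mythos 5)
Your proposal is correct and matches the paper's proof, which likewise obtains the identity by combining Theorem \ref{thm6800} (with $h=g$) and Lemma \ref{lem7400} and eliminating $\langle g,d(g)\rangle$. Nothing further is needed.
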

		\begin{proof}
			It is obtained by combining Lemma \ref{lem7400} and Theorem \ref{thm6800}.
		\end{proof}
		\begin{remark}\label{rem7700}
			Let $g$ be a tame g-vector with no negative direct summand. Then
			$$\min\hom_{\Lambda}(\mathcal{Z}_g,\mathcal{Z}_g)\le\dim\langle d(g),d(g) \rangle\ge\dim\mathcal{Z}_g.$$
		\end{remark}
	\section*{Acknowledgment}
		We would like to express our gratitude to Pierre-Guy Plamondon for his contribution in describing some results of \cite{PYK23} and \cite{Pla13} through email.
		Additionally, we are also thankful to Sota Asai who helped us to prepare Corollary \ref{cor6650} and informed us that Lemma \ref{lem6100} was independently proved by Calvin Pfeifer.
		The authors would like to thank the anonymous referee who provided useful and detailed comments on an earlier version of the manuscript.
		The research of the authors is supported by Iran National Science Foundation (INSF), Project No. 4003197.
	
	{\small\bibliographystyle{alpha}
		\bibliography{Nondecreasingcondition}}
\end{document}